\def\SR{S_{\mathcal{R}}}
\def\DR{{\mathcal{D}/\mathcal{R}}}
\def\DN{{\mathcal{D}/\mathcal{N}}}
\def\BB{{\mathcal{B}}}\def\RR{{\mathcal{R}}}
\def\DD{{\mathcal{D}}}
\def\Xone{\varepsilon_{\RR,\alpha_1-1,\alpha_2}}
\def\Xtwo{-\textstyle\frac12\{\frac{\alpha_1}{\alpha_1-1}\varepsilon_{\mathcal{R},\alpha_1-1,\alpha_2}
     +\frac{\alpha_2}{\alpha_2-1}\varepsilon_{\RR,\alpha_1,\alpha_2-1}\}}
\def\Xthree{\varepsilon_{\RR,\alpha_1,\alpha_2-1}}
\def\Xfour{\varepsilon_{\RR,\alpha_1-2,\alpha_2}}
\def\Xfive{-\textstyle\frac12\{\frac{\alpha_1-1}{\alpha_1-2}\varepsilon_{\mathcal{R},\alpha_1-2,\alpha_2}
     +\frac{\alpha_2}{\alpha_2-1}\varepsilon_{\RR,\alpha_1-1,\alpha_2-1}\}}
\def\Xsix{\varepsilon_{\RR,\alpha_1,\alpha_2}}
\def\Xseven{\varepsilon_{\RR,\alpha_1,\alpha_2-2}}
\def\Xeight{-\textstyle\frac12\{\frac{\alpha_1}{\alpha_1-1}\varepsilon_{\mathcal{R},\alpha_1-1,\alpha_2-1}
     +\frac{\alpha_2-1}{\alpha_2-2}\varepsilon_{\RR,\alpha_1,\alpha_2-2}\}}
\def\Xnine{-\frac12\frac{\alpha_1^2-2\alpha_1+\alpha_2^2-2\alpha_2+1}{3-\alpha_1-\alpha_2}
\varepsilon_{\RR,\alpha_1,\alpha_2}}
\def\Xten{\frac{\alpha_1^2+\alpha_2^2{-1}}{4(3-\alpha_1-\alpha_2)}\varepsilon_{\RR,\alpha_1,\alpha_2}
+\frac14\frac{\alpha_1\alpha_2}{(\alpha_1-1)(\alpha_2-1)}\varepsilon_{\RR,\alpha_1-1,\alpha_2-1}}
\def\Xtwelve{-\varepsilon_{\RR,\alpha_1,\alpha_2}}
\def\Xthirteen{0}
\def\Xfourteen{\varepsilon_{\RR,\alpha_1-1,\alpha_2-1}}
\def\Xfifteen{\{-\textstyle\frac{1}{\alpha_1-1}\varepsilon_{\RR,\alpha_1-1,\alpha_2}
     -\frac{1}{\alpha_2-1}\varepsilon_{\RR,\alpha_1,\alpha_2-1}\}}
\def\Xsixteen{\frac{1}{(\alpha_1-1)(\alpha_2-1)}\varepsilon_{\RR,\alpha_1-1,\alpha_2-1}
+\frac2{3-\alpha_1-\alpha_2}\varepsilon_{\RR,\alpha_1,\alpha_2}}
\def\Xseventeen{\{-\textstyle\frac{1}{\alpha_1-2}\varepsilon_{\RR,\alpha_1-2,\alpha_2}
     -\frac{1}{\alpha_2-1}\varepsilon_{\RR,\alpha_1-1,\alpha_2-1}\}}
\def\Xeighteen{\{-\textstyle\frac{1}{\alpha_1-1}\varepsilon_{\RR,\alpha_1-1,\alpha_2-1}
     -\frac{1}{\alpha_2-2}\varepsilon_{\RR,\alpha_1,\alpha_2-2}\}}
\def\Xnineteen{\frac{\alpha_1+\alpha_2}{3-\alpha_1-\alpha_2}\varepsilon_{\RR,\alpha_1,\alpha_2}
     +\frac12\frac{\alpha_1+\alpha_2}{(\alpha_1-1)(\alpha_2-1)}\varepsilon_{\RR,\alpha_1-1,\alpha_2-1}}
\newtheorem{theorem}{Theorem}[section]
\newtheorem{lemma}[theorem]{Lemma}
\newtheorem{remark}[theorem]{Remark}
\newtheorem{conjecture}[theorem]{Conjecture}
\begin{document}
\title[Heat Content Asymptotics]
{Neumann heat content asymptotics with singular initial temperature and singular specific heat}
\author{M. van den Berg}
\address
{School of Mathematics, University of Bristol\\
University Walk, Bristol BS8 1TW\\United Kingdom}
\begin{email}{M.vandenBerg@bris.ac.uk}\end{email}
\author{P. Gilkey}
\address{Mathematics Department, University of Oregon\\
Eugene, OR 97403\\USA}
\begin{email}{gilkey@uoregon.edu}\end{email}
\author{H. Kang}
\address{Department of Mathematics, Korea Institute for Advanced Study,
Hoegiro 87,\smallbreak\noindent Dongdaemun-gu, Seoul 130-722, Korea}
\begin{email}{h.kang@kias.re.kr}\end{email}
\begin{abstract} We study the asymptotic behavior of the
heat content on a compact Riemannian manifold with boundary and
with singular specific heat and singular initial temperature
distributions imposing Robin boundary conditions. Assuming the existence of a complete asymptotic
series we determine the first three terms in that series. In
addition to the general setting, the interval is studied in
detail as are recursion relations among the coefficients and the relationship between
the Dirichlet and Robin settings.\\
{Subject classification: 58J32; 58J35; 35K20}
 \end{abstract}

\maketitle
\section{Introduction}
\subsection{Historical framework} Let $M$ be a compact $m$-dimensional manifold
with boundary $\partial M$ and Riemannian metric $g$. Let $D$ be an operator
of Laplace type which drives the evolution process (see Section~\ref{sect-1.2} for details).
Let $\phi$ represent the initial
temperature of $M$ and let $T(x;t)$ be subsequent temperature of the manifold.
A good general reference to the heat equation is provided by
\cite{Ca45,G95}. We impose suitable boundary conditions $\BB$ to ensure the process is well defined. Let
$\rho$ be the specific heat. We suppose for the moment $\phi$ and $\rho$ are smooth
and let $\beta(t)$ be the heat content of the manifold.
There is a complete asymptotic series for $\beta$ as $t\downarrow0$ with locally computable
coefficients. We postpone for the moment a precise definition to avoid complicating the
exposition at this stage.

The problem was first studied by \cite{BD89,BlG94} in the context of domains in Euclidean space
with smooth boundaries. We note that regions with polygonal boundaries were examined in
 \cite{BeSr90}.  Subsequent results for regions with fractal boundaries were obtained in \cite{vdB,vdBdH}.
Apart from these papers, most of the work on the
heat content asymptotics has been in the smooth category and we shall always assume the
boundary of $M$ to be smooth.
The ``functorial method'' has been used to express the asymptotic coefficients in terms of geometrical
quantities for Dirichlet and Robin boundary conditions
by \cite{BeDeGi93,BG94,BG96,BeGi99,DS94}. Other authors
\cite{McA93,McMe03a,PhJa90,Sav98a,SAV01} have examined these coefficients using
other methods. Various other boundary conditions also have been considered
\cite{BGKK07,GiKixy,GiKiPa02} as have variable geometries \cite{G99}.
Motivated by work of \cite{x5} for the growth of the heat trace asymptotics, the growth
of the heat content asymptotics has been examined \cite{BGK11x,GM12,TS11}.

The case where the initial temperature and the specific heat are singular in a controlled
fashion at the boundary will form the centerpiece of this paper -- we refer to Section~\ref{sect-1.3}
for precise definitions. A rigorous treatment was
provided in \cite{BGS} where the initial temperature is singular on the boundary but the
specific heat is smooth. Subsequently, the case where the specific heat can be singular
as well was studied for Dirichlet boundary conditions \cite{BG12,BGKK07}. The present
paper examines the situation for Neumann boundary conditions.

\subsection{Operators of Laplace type}\label{sect-1.2}
Let $(M,g)$ be a compact Riemannian manifold of dimension $m$
with smooth boundary $\partial M$. Let $V$ be a smooth vector bundle
over $M$ and let $D:C^\infty(V)\rightarrow C^\infty(V)$ be a second order partial
differential operator on the space of smooth sections to $V$.
We adopt the {\it Einstein convention} and
sum over repeated indices. We say that $D$ is of {\it Laplace type} if
the leading symbol of $D$ is given by the metric tensor, i.e.
locally $D$ has the form:
$$\displaystyle D=-\left\{ g^{\mu\nu}\partial_{x_\mu}
\partial_{x_\nu}+A^\sigma\partial_{x_\sigma}+B\right\}\,.
$$
The following {\it Bochner formalism} \cite{G95} permits
us to work tensorially:
\begin{lemma}\label{lem-1.1} There exists a unique
connection $\nabla$ on $V$ and a unique endomorphism $E$ of $V$ so
that
$D=D(g,\nabla,E)=-(g^{\mu\nu}\nabla_{\partial_{x_\nu}}\nabla_{\partial_{x_\mu}}+E)$.
The connection $1$-form $\omega$ of
$\nabla$ and the endomorphism $E$ are given by
$$
\omega_\mu=\textstyle{\textstyle\frac12}(g_{\mu\nu}A^\nu
+g^{\sigma\varepsilon}\Gamma_{\sigma\varepsilon\mu}\operatorname{Id})\text{ and }
E=B-g^{\mu\nu}(\partial_{x_\nu}\omega_{\mu}+\omega_{\mu}\omega_{\nu}
    -\omega_{\sigma}\Gamma_{\mu\nu}{}^\sigma)\,.
$$\end{lemma}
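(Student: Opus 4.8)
\emph{Proof strategy.} Since $D$ is given, the formulas for $\omega$ and $E$ are local in character, so I would begin in a fixed coordinate chart on $M$ together with a local frame for $V$; a connection $\nabla$ on $V$ is then recorded by its connection $1$-form $\omega$, with $\nabla_{\partial_{x_\mu}}=\partial_{x_\mu}+\omega_\mu$. The plan is to expand the invariantly defined operator $-(g^{\mu\nu}\nabla_{\partial_{x_\nu}}\nabla_{\partial_{x_\mu}}+E)$ in this chart, compare it term by term against $-(g^{\mu\nu}\partial_{x_\mu}\partial_{x_\nu}+A^\sigma\partial_{x_\sigma}+B)$, and solve for $\omega$ and $E$.

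The only delicate point is that $\nabla_{\partial_{x_\mu}}s$ carries a cotangent index $\mu$, so the iterated derivative uses the connection induced on $T^*M\otimes V$:
\[
\nabla_{\partial_{x_\nu}}\nabla_{\partial_{x_\mu}}s=\partial_{x_\nu}\bigl(\nabla_{\partial_{x_\mu}}s\bigr)+\omega_\nu\bigl(\nabla_{\partial_{x_\mu}}s\bigr)-\Gamma_{\nu\mu}{}^{\sigma}\,\nabla_{\partial_{x_\sigma}}s\,.
\]
Substituting $\nabla_{\partial_{x_\mu}}s=\partial_{x_\mu}s+\omega_\mu s$ and contracting with $g^{\mu\nu}$, the second-order term is automatically $g^{\mu\nu}\partial_{x_\mu}\partial_{x_\nu}s$, the first-order term is $\bigl(2g^{\mu\sigma}\omega_\mu-g^{\mu\nu}\Gamma_{\mu\nu}{}^{\sigma}\bigr)\partial_{x_\sigma}s$ (the factor $2$ coming from the symmetry of $g^{\mu\nu}$ in the two mixed terms), and the remainder is of order zero. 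Matching the first-order coefficient with $A^\sigma$ and inverting the nondegenerate metric yields $\omega_\mu=\frac12\bigl(g_{\mu\nu}A^\nu+g^{\sigma\varepsilon}\Gamma_{\sigma\varepsilon\mu}\id\bigr)$; feeding this back into the zeroth-order matching with $B$ then gives $E=B-g^{\mu\nu}(\partial_{x_\nu}\omega_\mu+\omega_\mu\omega_\nu-\omega_\sigma\Gamma_{\mu\nu}{}^{\sigma})$, as asserted.

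For uniqueness, suppose $(\nabla,E)$ and $(\nabla',E')$ both represent $D$ in this form. Writing $\theta_\mu=\omega_\mu-\omega_\mu'$, the difference of the two expansions has first-order part $-2g^{\mu\sigma}\theta_\mu\,\partial_{x_\sigma}$, which must vanish; since $g^{\mu\sigma}$ is invertible this forces $\theta\equiv0$, hence $\nabla=\nabla'$, and then the vanishing of the zeroth-order part gives $E=E'$. Finally, to see that the locally defined data glue into a genuine connection and an endomorphism on all of $M$, I would reapply this uniqueness on overlaps: over $U\cap U'$ the restrictions of the two local solutions both represent $D$, hence coincide, so the local $\omega$'s and $E$'s patch. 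I expect no real obstacle here — the computation is mechanical — the one place to stay careful is keeping the $\Gamma$-contributions coming from the $T^*M$ factor in the second covariant derivative, since omitting them is the classical source of error.
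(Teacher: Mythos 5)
Your proof is correct. Note that the paper itself does not prove Lemma~\ref{lem-1.1}; it simply cites \cite{G95} for the Bochner formalism, so there is no internal proof to compare against. Your argument is the standard one: you correctly identify the crucial point that $g^{\mu\nu}\nabla_{\partial_{x_\nu}}\nabla_{\partial_{x_\mu}}$ must be read as the contraction of the second covariant derivative of a section of $T^*M\otimes V$ (not the naive composition $(\partial_\nu+\omega_\nu)(\partial_\mu+\omega_\mu)$), which is what produces the $-\Gamma_{\nu\mu}{}^\sigma\nabla_{\partial_{x_\sigma}}$ term and hence the $-\omega_\sigma\Gamma_{\mu\nu}{}^\sigma$ contribution in $E$; matching first- and zeroth-order symbols and lowering an index then gives exactly the displayed formulas, and the uniqueness and patching arguments are as you say.
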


If $D=d^*d+dd^*$ is the Laplacian on the space of smooth $p$-forms,
then $\nabla$ is the Levi-Civita connection and $E$ is given in terms of curvature. If $D$
is the spin Laplacian, then $\nabla$ is the spinor connection and $E$ is a multiple of the
scalar curvature. We shall use the Levi-Civita connection and $\nabla$
to covariantly differentiate tensors of all types.

We must impose suitable boundary conditions. Let $e_m$ be the inward unit
normal vector field on the boundary.
Let $\SR$ be an auxiliary endomorphism of $V|_{\partial M}$ and let
$\nabla$ be the connection on $V$ given in Lemma~\ref{lem-1.1}.
Let $\BB_\DD$ and $\BB_\RR$ be the
{\it Dirichlet} and the {\it Robin} boundary operators which are defined, respectively,
by setting:
$$
\BB_\DD \phi=\phi|_{\partial M}\quad\text{and}\quad
\BB_\RR\phi=(\nabla_{e_m}\phi+\SR \phi)|_{\partial M}\quad\text{for}\quad
f\in C^\infty(V)\,.
$$
The {\it Neumann} boundary operator is defined by taking $\SR=0$. We let $D_\DR$
denote either the Dirichlet or the Robin realization of $D$ henceforth. It is convenient
to have a common notation.

\subsection{The initial temperature and the specific heat}\label{sect-1.3}
 Let $r$ be the geodesic distance to the boundary. This is a continuous
function on all of $M$ which is smooth near $\partial M$. If
$\vec y=(y^1,...,y^{m-1})$ is a system of local coordinates near a point
$P\in\partial M$, then
$\vec x=(y,r)$ for $r\in[0,\epsilon]$ is an {\it adapted} system of local coordinates near $P$ in $M$
for some $\epsilon>0$;
the curves $\gamma_y(s):=(y,s)$ are unit speed geodesics perpendicular
to the boundary and $\partial_r$ is the inward pointing unit geodesic normal.

To have a common notation, let $W=V$ or let $W=V^*$. If $\alpha\in\mathbb{C}$,
let $\mathcal{K}_\alpha(W)$ be the vector space of all
sections $\Phi$ to $W$ which are smooth on the interior of $M$ such that
$r^\alpha\Phi$ is smooth near $\partial M$. The parameter $\alpha$ controls
the blow up (resp. decay) of $\Phi$ near the boundary if $\Re(\alpha)>0$ (resp.
$\Re(\alpha)<0$).
Let
$\phi\in\mathcal{K}_{\alpha_1}(V)$ represent the initial temperature of
$M$ and let $\rho\in\mathcal{K}_{\alpha_2}(V^*)$ give the specific
heat. We shall always assume that
\begin{equation}\label{eqn-1.a}
\Re(\alpha_1)<1,\quad \Re(\alpha_2)<1,\quad\text{and}\quad
\alpha_1+\alpha_2\notin\mathbb{Z}\,.
\end{equation}
The first two conditions ensure that $\phi$ and $\rho$ are in $L^1$.  The final
condition is imposed to avoid interactions between the interior and boundary terms
in the asymptotic series for the heat content. We will return to this point subsequently.

Expand $\phi\in\mathcal{K}_{\alpha_1}(V)$ near $\partial M$ in a modified Taylor series:
\begin{equation}\label{eqn-1.b}
\phi(y,r)\sim r^{-\alpha_1}\sum_{j=0}^\infty\phi^j(y)r^j
\quad\text{as}\quad r\downarrow0^+\text{ where }
\phi^j=\textstyle\frac1{j!}(\nabla_{\partial_r})^j(r^{\alpha_1}\phi)|_{\partial M}\,.
\end{equation}
We shall usually be working with scalar
operators and can choose a local section $s$ so that
$\nabla_{e_m}s=0$. We may then regard $\phi$ as a function and the above expansion as a
Taylor series. A similar expansion, of course, is valid for the specific heat $\rho$
where we regard $\rho$ as a section to the dual bundle and covariantly
differentiate with respect to
the dual connection on $V^*$ which has connection $1$-form $-\omega_\nu^\star$.

\subsection{The heat equation and the heat content}
If $\phi\in\mathcal{K}_{\alpha_1}(V)$, then the subsequent temperature
$T:=e^{-tD_{\BB}}\phi$ is characterized by the relations:
\begin{equation}\label{eqn-1.c}
\begin{array}{lll}
(\partial_t+D)T=0&\text{(evolution equation)},\\
\lim_{t\downarrow 0}T(\cdot;t)=\phi&\text{(initial condition)},
\vphantom{\vrule height 12pt}\\
\BB T(\cdot;t)=0\text{ for }t>0&\text{(boundary condition)}.
\vphantom{\vrule height 12pt}
\end{array}\end{equation}
Let $\langle\cdot,\cdot\rangle$ be the {\it natural pairing} between $V$ and
the dual bundle $V^*$, let $dx$ be the {\it Riemannian
measure} on $M$, let $dy$ be the Riemannian measure on $\partial M$, and let $\rho\in\mathcal{K}_{\alpha_2}(V^*)$ be
the {\it specific heat} of the manifold. The total {\it heat content} of the manifold is defined by setting:
$$
\beta(\phi,\rho,D,\BB)(t):=\int_M \langle
e^{-tD_{\BB}}\phi,\rho\rangle dx\,.
$$
There is a smooth {\it heat kernel} $K=K_{D,\BB}$ so that
$T(x;t)=\int_MK(x,\tilde x;t)\phi(\tilde x)d\tilde x$, and
$$
\beta(\phi,\rho,D,\mathcal{B})(t)=\int_{M\times M}\langle K(x,\tilde x;t)
\phi(\tilde x),\rho(x)\rangle d\tilde xdx\,.
$$
This is well defined for $\phi\in L^1(V)$ and $\rho\in L^1(V^*)$;
it was for this reason that we assumed $\Re(\alpha_1)<1$
and $\Re(\alpha_2)<1$ in Equation~(\ref{eqn-1.a}).

If $D_{\BB}$ is self-adjoint, as will be the case for
either the Dirichlet or the Robin realization of the scalar Laplacian, then
we can take a {\it spectral resolution} $\{\phi_n,\lambda_n\}$ for $D_{\BB}$.
Here the $\{\phi_n\}$ is an orthonormal basis for $L^2(V)$ such that
$D\phi_n=\lambda_n\phi_n$ and $\BB\phi_n=0$. We then have
$$K(x,\tilde x;t)=\sum_ne^{-t\lambda_n}\phi_n(x)\otimes\phi_n(\tilde x)\,.$$
This series converges in the $C^\infty$ topology for $t>0$.
The temperature $T$ is smooth for $t>0$. However, the
 convergence to $\phi$ as $t\downarrow0$ in Equation~(\ref{eqn-1.c}) is not pointwise but in 
 $L^1(V)$.

The following example is instructive. Let $M=[0,\pi]$ and let $D=-\partial_x^2$.
The spectral resolution of the Dirichlet Laplacian is given by
$\{n^2,\sqrt{2/\pi}\sin(nx)\}_{n\ge1}$.
Thus if $\phi=1$, we have
$$T(x;t)=\frac2{\pi}\sum_{n=1,n\text{ odd}}^\infty \frac2n e^{-tn^2}\sin(nx)\,.$$
This is smooth for $t>0$. However, since
$T(0;t)=0$ for $t>0$ and $\phi=1$, the convergence is not pointwise. The
associated heat content is given by:
\begin{eqnarray*}
\beta(1,1,\Delta,\BB_\DD)(t)&=&
\frac8\pi\sum_{n=1,n\text{ odd}}^\infty\frac1{n^2}e^{-tn^2}\\
&=&\pi-\frac4{\sqrt\pi}t^{1/2}+O(t^k)\text{ as }t\downarrow0\ \forall k\ge1\,.
\end{eqnarray*}

\subsection{The form of the asymptotic series}\label{sect-1.5}
We shall need to consider certain integrals which are divergent and which need to be
{\it regularized}.  For example, the integral
$\int_M\langle\phi,\rho\rangle dx$ is divergent if $1<\Re(\alpha_1+\alpha_2)<2$.
The Riemannian measure is not in general a product near the boundary.
Since, however, $dx=dydr$ on the boundary of
$M$, we may decompose
$$\langle\phi,\rho\rangle dx=\langle\phi^0,\rho^0\rangle r^{-\alpha_1-\alpha_2}dydr
+O(r^{1-\alpha_1-\alpha_2})\,.$$
Let $C_\varepsilon:=\{x\in M:r(x)\le\varepsilon\}$
be a small collared neighborhood of
the boundary. For $\Re(\alpha_1+\alpha_2)<2$ and $\alpha_1+\alpha_2\ne1$, define:
\begin{eqnarray*}
\mathcal{I}_{Reg}^g(\phi,\rho)
&:=&\displaystyle\phantom{+}\int_{M-{\mathcal{C}_\epsilon}}\langle\phi,\rho\rangle dx
+\int_{\mathcal{C}_\epsilon}
\left\{\langle\phi,\rho\rangle dx-\langle\phi^0,\rho^0\rangle r^{-\alpha_1-\alpha_2}dydr\right\}\\
&&\displaystyle+\int_{\partial M}\langle\phi^0,\rho^0\rangle dy
\times\varepsilon^{1-\alpha_1-\alpha_2}(1-\alpha_1-\alpha_2)^{-1}\,.
\end{eqnarray*}
This is clearly independent of $\varepsilon$
and agrees with $\int_M\langle\phi,\rho\rangle dx$ if $\Re(\alpha_1+\alpha_2)<1$.
The regularization ${\mathcal{I}_{\operatorname{Reg}}}(\phi,\rho)$ is a meromorphic
function of $\alpha_1+\alpha_2$ with a simple pole at
$\alpha_1+\alpha_2 = 1$.
More generally, the integrals $\langle D^n\phi,\rho\rangle$ which appear in Conjecture~\ref{conj-1.2}
below need regularization if $\Re(\alpha_1+\alpha_2)>1-2n.$ Poles can appear
whenever $\Re(\alpha_1+\alpha_2)=1-k$. These poles are evident in the
formulas given subsequently; we expect the local formulas for the
asymptotic expansion of the heat content may involve log terms
when $\alpha_1+\alpha_2\in\mathbb{Z}$ and
for that reason excluded these values in Equation~(\ref{eqn-1.a}).

To simplify the notation, we set:
$$\beta_n^M(\phi,\rho,D):=(-1)^n/n!\cdot\mathcal{I}_{Reg}^g\{\langle D^n\phi,\rho\rangle\}\,.$$
If $M$ is a closed manifold, these are the invariants which would appear in the heat content
expansion. We assume that the following conjecture
(which extends the discussion of  \cite{BG12}) holds henceforth. We will justify the
powers $t^{(1+j-\alpha_1-\alpha_2)/2}$ subsequently in Section~\ref{sect-4} using dimensional analysis.
We refer to \cite{BGS}
where a related result was established when the specific heat is smooth.

\begin{conjecture}\label{conj-1.2}
If $(\alpha_1,\alpha_2)$ satisfy Equation~(\ref{eqn-1.a}), then
there is a complete asymptotic series as $t\downarrow0$ of the form:
\begin{eqnarray*}
\beta(\phi,\rho,D,\BB_\DR)(t)&\sim&
\sum_{n=0}^\infty t^n\beta_n^M(\phi,\rho,D)\\
&+&\sum_{j=0}^\infty t^{(1+j-\alpha_1-\alpha_2)/2}
   \beta_{j,\alpha_1,\alpha_2}^{\partial
   M}(\phi,\rho,D,\BB_\DR)\,.
\end{eqnarray*}
The coefficients $\beta_{j,\alpha_1,\alpha_2}^{\partial M}(\phi,\rho,D,\BB_\DR)$ are given by
integrals of local invariants over the boundary.
\end{conjecture}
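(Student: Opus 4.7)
The plan is to split the heat content into interior and boundary pieces, handle each by the appropriate technique, and stitch them together via the regularization $\mathcal{I}_{\operatorname{Reg}}^g$. Let $\chi$ be a smooth cutoff equal to $1$ on $C_{\varepsilon/2}$ and supported in $C_\varepsilon$, and write
$$\beta(\phi,\rho,D,\BB)(t)=\int_M\langle e^{-tD_\BB}((1-\chi)\phi),\rho\rangle\, dx+\int_M\langle e^{-tD_\BB}(\chi\phi),\rho\rangle\, dx=:\beta_{\text{int}}(t)+\beta_{\text{bd}}(t).$$
The first term isolates the contribution of smooth interior data, while the second captures the singular behaviour against $\partial M$.

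For $\beta_{\text{int}}(t)$, the datum $(1-\chi)\phi$ is smooth and compactly supported away from $\partial M$. Gaussian off-diagonal bounds for the Robin heat kernel give $e^{-tD_\BB}((1-\chi)\phi)-e^{-tD}((1-\chi)\phi)=O(e^{-c/t})$ in any region bounded away from $\partial M$ and $O(e^{-c/t})$ when paired against the singular $\rho$ via its $L^1$ bound. A finite Duhamel/Taylor expansion then yields $\sum_{n=0}^N t^n(-1)^n/n!\cdot\langle D^n((1-\chi)\phi),\rho\rangle+O(t^{N+1})$; the smooth-cutoff correction is absorbed into $\beta_{\text{bd}}$, and the remaining pairings are reassembled with the Taylor expansion of $\phi$ at the boundary into the regularized integrals $\beta_n^M(\phi,\rho,D)$, producing the integer-power series.

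For $\beta_{\text{bd}}(t)$, I would work in adapted coordinates $(y,r)$ on $C_\varepsilon$ and substitute the modified Taylor series $\phi\sim r^{-\alpha_1}\sum_j\phi^j(y)r^j$ and $\rho\sim r^{-\alpha_2}\sum_k\rho^k(y)r^k$. Introduce the parabolic rescaling $r=\sqrt{t}\,s$, under which $-\partial_r^2$ becomes $t^{-1}(-\partial_s^2)$, the Robin condition $\nabla_{e_m}T+\SR T=0$ at $r=0$ becomes $\partial_sT+\sqrt{t}\,\SR T=0$ at $s=0$, and the lower-order terms of $D$ (connection coefficients, second fundamental form, potential $E$) contribute a power series in $\sqrt{t}$. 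The leading model problem is the half-line heat equation with Neumann/Robin condition at $s=0$ and explicit kernel; applied to $s^{j-\alpha_1}$ and tested against $s^{k-\alpha_2}$, the Jacobian $\sqrt{t}\,ds$ produces powers $t^{(1+j+k-\alpha_1-\alpha_2)/2}$, matching the exponents in the conjecture. Iterating the perturbation series in $\sqrt{t}$ and integrating the resulting polynomial expressions in $\phi^j,\rho^k,\SR$ and tangential curvatures over $\partial M$ yields the boundary coefficients $\beta_{j,\alpha_1,\alpha_2}^{\partial M}$ as local invariants.

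The main obstacle is the matching of the two regions when $\Re(\alpha_1+\alpha_2)\ge1-2n$, where the naive interior integrals $\langle D^n\phi,\rho\rangle$ diverge and must be replaced by $\mathcal{I}_{\operatorname{Reg}}^g$, while the collar analysis simultaneously develops $\varepsilon$-dependent terms that must cancel the counterterm $\varepsilon^{1-\alpha_1-\alpha_2}(1-\alpha_1-\alpha_2)^{-1}$ in the definition of $\mathcal{I}_{\operatorname{Reg}}^g$. My strategy is to first establish the expansion in the absolutely convergent regime $\Re(\alpha_1+\alpha_2)<1-2N$, where both series consist of ordinary integrals and the argument is a straightforward perturbation around the half-line model, and then meromorphically continue in $(\alpha_1,\alpha_2)$. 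The exclusion $\alpha_1+\alpha_2\notin\mathbb{Z}$ from Equation~(\ref{eqn-1.a}) keeps the continuation away from the simple poles of $\mathcal{I}_{\operatorname{Reg}}^g$ and prevents resonance between the two series, which would otherwise force logarithmic terms; the uniqueness of asymptotic expansions then transfers the identification of coefficients from the convergent regime to the general case.
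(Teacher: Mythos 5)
The statement you are trying to prove is not proved in the paper at all: Conjecture~\ref{conj-1.2} is explicitly an \emph{assumption} that the authors make and use throughout, and the accompanying remark notes that it has only been established (in \cite{BGS}, via the calculus of pseudo-differential operators) in the special case that $\alpha_1\in\mathbb{N}$ or $\alpha_2\in\mathbb{N}$. So your proposal should be judged as an attempted proof of an open statement, and as such it is a reasonable heuristic outline — indeed its rescaling $r=\sqrt t\,s$ and the resulting exponents $t^{(1+j-\alpha_1-\alpha_2)/2}$ mirror the dimensional-analysis justification the paper gives in Section~\ref{sect-2} — but it is not a proof.

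The genuine gaps are precisely the points you pass over in a sentence. First, the ``perturbation series in $\sqrt t$'' around the half-line model: to turn this into an asymptotic expansion to all orders you must construct a parametrix for the Robin problem with data blowing up like $r^{-\alpha_1}$ and prove remainder estimates uniform down to the boundary; this is the hard analytic content for which \cite{BGS} needed the pseudo-differential machinery, and even there only for integer values of one exponent. Second, the matching of the collar contribution with the divergent interior integrals, i.e.\ the cancellation of the $\varepsilon$-dependent terms against the counterterm in $\mathcal{I}_{\operatorname{Reg}}^g$, is exactly where the interaction producing the excluded poles at $\alpha_1+\alpha_2\in\mathbb{Z}$ occurs; you name the obstacle but do not resolve it. Third, your final step — ``establish the expansion for $\Re(\alpha_1+\alpha_2)<1-2N$ and then meromorphically continue, with uniqueness of asymptotic expansions transferring the result'' — does not work as stated: uniqueness of an asymptotic expansion cannot transfer the \emph{existence} of an expansion from one parameter regime to another. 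One needs analyticity in $(\alpha_1,\alpha_2)$ of the remainder terms with uniform bounds, and the paper treats even the analytic dependence of the coefficients as a separate unproved assumption (Conjecture~\ref{conj-1.5}). Until those three steps are supplied, your argument reproduces the motivation for the conjecture rather than a proof of it.
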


\begin{remark}\rm This conjecture has been established in \cite{BGS} using the
calculus of pseudo-differential operators in the special case
that $\alpha_2\in\mathbb{N}$ or that $\alpha_1\in\mathbb{N}$. The extension to
the present setting is motivated by that work.
\end{remark}

Let $R_{ijkl}$ denote the Riemann curvature tensor; with our sign convention, we have that
 $R_{1221}=+1$
on the unit sphere $S^2$ in $\mathbb{R}^3$. Let $\operatorname{Ric}$ denote the Ricci
tensor, let $\tau$ denote the scalar curvature,
and let $L_{ab}$ denote the second fundamental form. We let indices $\{i,j,k,l\}$ range
from $1$ to $m$ and index a local orthonormal frame for $TM$; we let indices $\{a,b,c\}$
range from $1$ to $m-1$ and index a local orthonormal frame for $T\partial M$. On
the boundary, $e_m$ will always denote the inward unit geodesic normal and `;' will
denote the components of the covariant derivative.
In Section~\ref{sect-2}, we will perform a careful analysis of the local formulas involved
and show:\begin{lemma}\label{lem-1.4}
There exist universal constants
$\varepsilon_{\DR,\alpha_1,\alpha_2}^\nu$ so that:
\medbreak
$\beta_{0,\alpha_1,\alpha_2}^{\partial M}
     (\phi,\rho,D,\BB_\DR)=\displaystyle\int_{\partial M}
       \varepsilon_{\DR,\alpha_1,\alpha_2}
\langle\phi^0,\rho^0\rangle dy$.
\medbreak$
\beta_{1,\alpha_1,\alpha_2}^{\partial M}
(\phi,\rho,D,\BB_\DR)
=\displaystyle\int_{\partial M}\left\{\varepsilon_{\DR,\alpha_1,\alpha_2}^1
\langle\phi^1,\rho^0\rangle
+\varepsilon_{\DR,\alpha_1,\alpha_2}^2
\langle L_{aa}\phi^0,\rho^0\rangle\right.$
\medbreak$\left.\quad
+\varepsilon_{\DR,\alpha_1,\alpha_2}^3
\langle\phi^0,\rho^1\rangle
+\varepsilon_{\RR,\alpha_1,\alpha_2}^{15}
\langle \SR\phi^0,\rho^0\rangle\right\}dy$.
\medbreak$
\beta_{2,\alpha_1,\alpha_2}^{\partial M}
(\phi,\rho,D,\BB_\DR)
    =\displaystyle\int_{\partial M} \left\{\varepsilon_{\DR,\alpha_1,\alpha_2}^4
    \langle\phi^2,\rho^0\rangle
+\varepsilon_{\DR,\alpha_1,\alpha_2}^5\langle
L_{aa}\phi^1,\rho^0\rangle\right.$
\medbreak$\quad
    +\varepsilon_{\DR,\alpha_1,\alpha_2}^6\langle E
    \phi^0,\rho^0\rangle$
$+\varepsilon_{\DR,\alpha_1,\alpha_2}^7\langle
\phi^0,\rho^2\rangle
   +\varepsilon_{\DR,\alpha_1,\alpha_2}^8\langle
   L_{aa}\phi^0,\rho^1\rangle$
\medbreak$\quad
    +\varepsilon_{\DR,\alpha_1,\alpha_2}^9\langle
    \operatorname{Ric}_{mm}\phi^0,\rho^0\rangle
+\varepsilon_{\DR,\alpha_1,\alpha_2}^{10}\langle
 L_{aa}L_{bb}\phi^0,\rho^0\rangle$
 \medbreak\quad
$+\varepsilon_{\DR,\alpha_1,\alpha_2}^{11}\langle
L_{ab}L_{ab}\phi^0,\rho^0\rangle
+\varepsilon_{\DR,\alpha_1,\alpha_2}^{12}
\langle \phi^0_{;a},\rho^0_{;a}\rangle
+\varepsilon_{\DR,\alpha_1,\alpha_2}^{13}
\langle\tau\phi^0,\rho^0\rangle$
\medbreak$\quad+
\varepsilon_{\DR,\alpha_1,\alpha_2}^{14}
\langle\phi^1,\rho^1\rangle
+\varepsilon_{\RR,\alpha_1,\alpha_2}^{16}
\langle \SR^2\phi^0,\rho^0\rangle
+\varepsilon_{\RR,\alpha_1,\alpha_2}^{17}
\langle \SR\phi^1,\rho^0\rangle$
\medbreak$\left.\vphantom{\vrule height 10pt}\quad
+\varepsilon_{\RR,\alpha_1,\alpha_2}^{18}
\langle\SR\phi^0,\rho^1\rangle
+\varepsilon_{\RR,\alpha_1,\alpha_2}^{19}
\langle \SR L_{aa}\phi^0,\rho^0\rangle \right \}dy$.
\medbreak\noindent We omit the terms involving $\SR$ for Dirichlet boundary conditions.
\end{lemma}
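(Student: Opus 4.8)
The plan is to establish Lemma~\ref{lem-1.4} by combining the structure of the conjectured asymptotic expansion in Conjecture~\ref{conj-1.2} with the standard invariance theory of heat content coefficients. Since $\beta_{j,\alpha_1,\alpha_2}^{\partial M}(\phi,\rho,D,\BB_\DR)$ is, by Conjecture~\ref{conj-1.2}, an integral over $\partial M$ of a local invariant built from the jets of $\phi$, of $\rho$, of the metric $g$, of the endomorphism $E$, of the connection, and (in the Robin case) of $\SR$, the first step is a weight-counting argument. One assigns weights: $1$ to $r$, and correspondingly $-\alpha_1$ to the leading symbol of $\phi$ (so that $\phi^j$ has weight $j-\alpha_1$), $-\alpha_2$ to $\rho$ (so $\rho^j$ has weight $j-\alpha_2$), weight $1$ to each explicit covariant derivative and to $L_{ab}$ and $\SR$, weight $2$ to $E$, $\operatorname{Ric}$, $\tau$, and to each component of the Riemann tensor. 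The power $t^{(1+j-\alpha_1-\alpha_2)/2}$ of $t$ forces the density in $\beta_{j,\alpha_1,\alpha_2}^{\partial M}$ to be homogeneous of total weight $j$ in these quantities after stripping the pairing $\langle\phi^{\bullet},\rho^{\bullet}\rangle$. This dimensional analysis is precisely what Section~\ref{sect-4} is advertised to supply, so I would invoke it.

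Next I would enumerate, for each of $j=0,1,2$, all local boundary invariants of the correct weight that are linear in the jet of $\phi$ and linear in the jet of $\rho$ (bilinearity in $(\phi,\rho)$ coming from the bilinearity of $\beta$ in its first two slots). For $j=0$ the only possibility is $\langle\phi^0,\rho^0\rangle$. For $j=1$ one may spend the single unit of weight either by raising one jet index — giving $\langle\phi^1,\rho^0\rangle$ or $\langle\phi^0,\rho^1\rangle$ — or by inserting a weight-one geometric scalar, which on the boundary and contracted to a scalar must be $L_{aa}$ (the trace of the second fundamental form) or, in the Robin case, $\SR$; hence the four terms listed. For $j=2$ one lists systematically: two jet raisings ($\langle\phi^2,\rho^0\rangle$, $\langle\phi^1,\rho^1\rangle$, $\langle\phi^0,\rho^2\rangle$), one jet raising times $L_{aa}$ ($\langle L_{aa}\phi^1,\rho^0\rangle$, $\langle L_{aa}\phi^0,\rho^1\rangle$) or times $\SR$ (the $\varepsilon^{17},\varepsilon^{18}$ terms), the tangential-derivative contraction $\langle\phi^0_{;a},\rho^0_{;a}\rangle$, and the weight-two scalar geometric invariants multiplying $\langle\phi^0,\rho^0\rangle$: namely $E$, $\operatorname{Ric}_{mm}$, $L_{aa}L_{bb}$, $L_{ab}L_{ab}$, $\tau$, and in the Robin case $\SR^2$ and $\SR L_{aa}$. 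One checks that $R_{ammb}$-type contractions reduce via the Gauss equation to combinations already on the list, so nothing is missed and nothing is redundant; this accounts for exactly the invariants $\varepsilon^4,\dots,\varepsilon^{14}$ and $\varepsilon^{16},\dots,\varepsilon^{19}$.

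Finally, the universality of the constants follows because the heat kernel $K_{D,\BB}$ is built from the metric, connection, $E$, and boundary data by a universal construction, so the coefficients in the expansion are the same functions of $(\alpha_1,\alpha_2)$ for every $(M,g,V,D,\phi,\rho)$; they depend only on which monomial invariant they multiply. The $\varepsilon^{15}$ term appears already at $j=1$ among the Robin-only terms, explaining the slightly irregular numbering, and the closing sentence records that setting $\SR=0$ (the Neumann or Dirichlet case) kills all $\varepsilon^{15},\dots,\varepsilon^{19}$ contributions. The main obstacle is the combinatorial completeness of the invariant enumeration at $j=2$: one must be careful that every contraction of curvature, second fundamental form, and tangential derivatives of the boundary jets that has weight $2$ and is bilinear in $(\phi,\rho)$ is on the list, and that the reductions forced by the Gauss–Codazzi equations and by integration by parts along the closed manifold $\partial M$ (which can convert, e.g., $\langle\phi^0_{;aa},\rho^0\rangle$ into $-\langle\phi^0_{;a},\rho^0_{;a}\rangle$) have been applied consistently so that the listed spanning set is actually a basis. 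The detailed verification of this is exactly the "careful analysis of the local formulas" promised for Section~\ref{sect-2}.
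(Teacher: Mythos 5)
Your proposal follows the same route as the paper: a dimensional/rescaling argument (the paper's Lemma~\ref{lem-2.1}, which is actually carried out in Section~\ref{sect-2}, not Section~\ref{sect-4} as your sketch suggests) pins down the weighted homogeneity of the local density, the tensorial reformulation together with Weyl's invariance theorem for $O(m-1)$ gives a spanning list of boundary invariants of the correct weight and bilinear in $(\phi,\rho)$, and integration by parts on the closed boundary absorbs divergence terms such as $\langle\phi^0_{:aa},\rho^0\rangle$ into $-\langle\phi^0_{:a},\rho^0_{:a}\rangle$. Your enumeration of the monomials at $j=0,1,2$ — including the Gauss reduction that eliminates $\tau_{\partial M}$ and the observation that the connection curvature $\Omega$, being antisymmetric, produces no scalar at weight $\le 2$ — is the same careful bookkeeping the paper relies on, so the proposal is correct and not a genuinely different argument.
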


We shall assume the following henceforth; it is properly part of Conjecture~\ref{conj-1.2}
but we postponed it until we could introduce the appropriate notation. 
 Note that it is crucial that we permit $\alpha_i$ to be complex so that
the open set of Equation~(\ref{eqn-1.a}) is connected as this would fail if
we only considered real $\alpha_i$. We refer to \cite{BGS}
where a related result was established when the specific heat is smooth:

\begin{conjecture}\label{conj-1.5}
The functions $\varepsilon_{\DR,\alpha_1,\alpha_2}^\nu$
appearing in Lemma~\ref{lem-1.4} have an analytic extension to the
connected open set defined by Equation~(\ref{eqn-1.a}).
\end{conjecture}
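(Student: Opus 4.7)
The plan is to exploit universality: since the constants $\varepsilon^\nu_{\DR,\alpha_1,\alpha_2}$ of Lemma~\ref{lem-1.4} do not depend on $(M,g,D,\BB)$ beyond the listed invariants, it suffices to compute them on a single convenient model geometry and then to show that the resulting expressions are meromorphic functions of $(\alpha_1,\alpha_2)$ whose only singularities lie on the hyperplanes $\alpha_1+\alpha_2\in\mathbb{Z}$. I would take $M=[0,\varepsilon]$ with $D=-\partial_x^2$, trivial line bundle, and constant Robin endomorphism $\SR=s$, together with test functions $\phi(x)=\chi(x)\,x^{k_1-\alpha_1}$ and $\rho(x)=\chi(x)\,x^{k_2-\alpha_2}$ for a cutoff $\chi$ localizing near $0$ and integers $k_1,k_2\geq 0$. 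All but one Taylor coefficient of $\phi$ and of $\rho$ at the boundary vanish, so varying $(k_1,k_2,s)$ and introducing small amounts of curvature by working on a short arc rather than a straight interval isolates each $\varepsilon^\nu$ in turn.

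The Robin heat kernel on $[0,\infty)$ is explicit: a reflection-principle Gaussian plus an $\operatorname{erfc}$-correction term that is $s$-analytic. Substituting $x=\sqrt{t}\,u$, $y=\sqrt{t}\,v$ into
\[
\beta(\phi,\rho,D,\BB_\RR)(t)=\int_0^{\varepsilon}\!\!\int_0^{\varepsilon}K(x,y;t)\,x^{k_2-\alpha_2}\,y^{k_1-\alpha_1}\chi(x)\chi(y)\,dx\,dy,
\]
the cutoff $\chi$ contributes only errors smaller than any power of $t$, and the leading behavior takes the form $t^{(1+k_1+k_2-\alpha_1-\alpha_2)/2}\,F_{k_1,k_2}(\alpha_1,\alpha_2;s)$, where $F_{k_1,k_2}$ is a definite double integral of an explicit Gaussian-plus-$\operatorname{erfc}$ kernel against $u^{k_2-\alpha_2}v^{k_1-\alpha_1}$. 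Matching the $t$-powers with Conjecture~\ref{conj-1.2} and Lemma~\ref{lem-1.4} identifies $F_{k_1,k_2}$ and its $s$-derivatives with explicit linear combinations of the $\varepsilon^\nu$.

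After expanding the Gaussians, each $F_{k_1,k_2}$ reduces to a finite sum of products of Gamma-type integrals $\tfrac12\Gamma(a/2)$ with $a$ an affine function of $\alpha_1,\alpha_2$, giving meromorphic functions on $\mathbb{C}^2$ whose poles lie only on hyperplanes $\alpha_1+\alpha_2\in\mathbb{Z}$. Inverting the consistent linear system for the $\varepsilon^\nu$ expresses each as a rational combination of such meromorphic functions, with any spurious poles forced to cancel by the known formulas for $\alpha_i\in\mathbb{N}$ established in \cite{BGS}. This produces the desired analytic extension to the connected open set of Equation~(\ref{eqn-1.a}).

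The main obstacle is the bookkeeping near the exceptional hyperplanes $\alpha_1+\alpha_2\in\mathbb{Z}$, where the interior power $t^n$ collides with a boundary power $t^{(1+j-\alpha_1-\alpha_2)/2}$. Near such a hyperplane the regularized interior integral $\mathcal{I}_{\operatorname{Reg}}^g$ has a simple pole, and this pole must cancel a corresponding pole of the boundary coefficient in order for $\beta(t)$ itself to remain regular. Checking that the analytic extension extracted from the model computation respects this cancellation across each hyperplane — so that passage between the different real connected components of parameter space, joined only through the complex parameter space, recovers the correct boundary coefficient — is the genuinely delicate step; it is precisely here that the connectedness asserted in the statement, and hence the third clause of Equation~(\ref{eqn-1.a}), is essential.
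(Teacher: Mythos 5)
First, a point of comparison: the paper does not prove this statement at all. It is deliberately labelled a conjecture, assumed throughout (the paper says it is ``properly part of Conjecture~\ref{conj-1.2}''), with rigorous support only from the pseudo-differential analysis of \cite{BGS} in the case where one of the two data is smooth. So your proposal cannot be measured against a proof in the paper; it has to stand on its own, and as written it does not. The central gap is your claim that all the constants $\varepsilon^\nu_{\DR,\alpha_1,\alpha_2}$ can be isolated by computations on $[0,\varepsilon]$ with $D=-\partial_x^2$, ``introducing small amounts of curvature by working on a short arc rather than a straight interval.'' A one-dimensional Riemannian manifold is flat, its boundary points carry no second fundamental form, and an arc with its induced metric is isometric to an interval; so no choice of $(k_1,k_2,s)$ in your family of models ever produces the invariants $L_{aa}$, $L_{ab}L_{ab}$, $\operatorname{Ric}_{mm}$, $\tau$, $E$ or $\phi^0_{:a}\rho^0_{:a}$, and the coefficients $\varepsilon^2,\varepsilon^5,\varepsilon^6,\varepsilon^8,\varepsilon^9,\ldots,\varepsilon^{13},\varepsilon^{19}$ are simply invisible to it. To reach them you need genuinely higher-dimensional curved models (warped products as in Section~\ref{sect-4.5}, balls, etc.), and there the Robin heat kernel with singular weights is no longer an explicit Gaussian-plus-$\operatorname{erfc}$ expression, so your key step --- reduction of each model heat content to a finite sum of $\Gamma$-type integrals, whence meromorphy in $(\alpha_1,\alpha_2)$ --- is unsupported exactly for the coefficients where analyticity is nontrivial. (The paper's Section~\ref{sect-4} does reduce all coefficients to $\varepsilon_{\DR,\alpha_1,\alpha_2}$ by functorial identities, but those identities are themselves proved only on restricted parameter ranges and are then extended \emph{using} Conjecture~\ref{conj-1.5}; invoking that reduction here would be circular.)

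A second, lesser gap: even in the half-line model, once $\Re(\alpha_1+\alpha_2)<1$ the boundary powers $t^{(1+j-\alpha_1-\alpha_2)/2}$ interlace with, and are preceded by, the interior powers $t^n$, whose coefficients are the regularized integrals $\mathcal{I}_{\operatorname{Reg}}$ carrying their own poles at $\alpha_1+\alpha_2\in\mathbb{Z}$. Extracting the boundary coefficient then requires subtracting these regularized interior terms and controlling the remainder for each parameter value --- this is exactly why the paper performs its explicit evaluation only on the strip $\alpha_1+\alpha_2>1$ and appeals to the conjecture for the rest. Your final appeal to pole cancellation ``forced by the known formulas for $\alpha_i\in\mathbb{N}$ established in \cite{BGS}'' does not fill this in: \cite{BGS} treats only the case where one of $\phi,\rho$ is smooth, so it cannot legislate the behavior of the doubly singular coefficients near the exceptional hyperplanes. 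In short, your strategy would at best re-derive Lemma~\ref{lem-1.6} on the strip; the analytic extension asserted in the conjecture, in particular for the curvature coefficients, remains unproved.
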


\subsection{The invariant $\beta_0^{\partial M}$}
In Section~\ref{sect-3}, we will make a special case computation on the half-line to
identify the constants $\varepsilon_{\DR,\alpha_1,\alpha_2}$
of Lemma~\ref{lem-1.4}. Set
$$\varepsilon_{\DR}:=\left\{
\begin{array}{lll}
-1&\text{if}&\BB=\BB_\DD\\
+1&\text{if}&\BB=\BB_{\mathcal{R}}
\end{array}\right\}\,.$$

\begin{lemma}\label{lem-1.6}
If $(\alpha_1,\alpha_2)$ satisfies Equation~(\ref{eqn-1.a}), then
\medbreak\quad
$\varepsilon_{\DR,\alpha_1,\alpha_2}
:=\varepsilon_{\DR}\cdot2^{-\alpha_1 -\alpha_2}\pi^{-1/2}
\Gamma\left(\frac{2-\alpha_1-\alpha_2}2\right) \cdot
 \frac{\Gamma(1-\alpha_1)\Gamma(1-\alpha_2)}
 {\Gamma(2-\alpha_1-\alpha_2)}$
\medbreak\qquad
$+ 2^{-\alpha_1-\alpha_2}\pi^{-1/2}\Gamma\left(\frac{2-\alpha_1-\alpha_2}2\right)
 \Gamma(\alpha_1+\alpha_2-1) \cdot\left(\frac{\Gamma(1-\alpha_1)}{
 \Gamma(\alpha_2)}+\frac{\Gamma(1-\alpha_2)}
{ \Gamma(\alpha_1)}\right)$.
\end{lemma}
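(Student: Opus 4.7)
By Lemma~\ref{lem-1.4} and Conjecture~\ref{conj-1.5}, the constant $\varepsilon_{\DR,\alpha_1,\alpha_2}$ is universal, so it suffices to evaluate it in one convenient example. Take $M=[0,L]$ with $L$ large, $D=-\partial_x^2$, and
\[
\phi(y) = \chi(y)\,y^{-\alpha_1},\qquad \rho(x)=\chi(x)\,x^{-\alpha_2},
\]
where $\chi$ is a smooth cutoff equal to $1$ near $0$ and supported in $[0,L/2]$. Then $\phi^0=\rho^0=1$ at the component $\{0\}$ of $\partial M$ while all boundary Taylor data vanish at $\{L\}$, so Lemma~\ref{lem-1.4} gives $\beta_{0,\alpha_1,\alpha_2}^{\partial M}=\varepsilon_{\DR,\alpha_1,\alpha_2}$. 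In the Robin case we take $\SR=0$, since Lemma~\ref{lem-1.4} places all $\SR$-dependence in higher-order coefficients. Replacing the $[0,L]$ heat kernel by the half-line kernel $K(x,y;t)=(4\pi t)^{-1/2}\bigl(e^{-(x-y)^2/4t}+\varepsilon_{\DR}\,e^{-(x+y)^2/4t}\bigr)$ introduces only an exponentially small error, and extending $\chi$ to $1$ only perturbs the integer-power $t^n$ coefficients (the jet of $1-\chi$ at $\partial M$ vanishes, and $\alpha_1+\alpha_2\notin\mathbb{Z}$). A rescaling $x=\sqrt{t}\,\xi$, $y=\sqrt{t}\,\eta$ then isolates the coefficient of $t^{(1-\alpha_1-\alpha_2)/2}$ as
\[
\varepsilon_{\DR,\alpha_1,\alpha_2}=\frac{1}{2\sqrt\pi}\bigl(J_-+\varepsilon_{\DR}\,J_+\bigr),\qquad
J_\pm:=\int_0^\infty\!\!\int_0^\infty e^{-(\xi\mp\eta)^2/4}\,\eta^{-\alpha_1}\xi^{-\alpha_2}\,d\eta\,d\xi.
\]

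For $J_+$, substitute $\xi=sv$, $\eta=s(1-v)$ with $(s,v)\in(0,\infty)\times(0,1)$ and Jacobian $s$; the integrand factors as a Beta integral times a Gaussian moment,
\[
J_+ = B(1-\alpha_2,1-\alpha_1)\int_0^\infty s^{1-\alpha_1-\alpha_2}e^{-s^2/4}\,ds
=\frac{\Gamma(1-\alpha_1)\Gamma(1-\alpha_2)}{\Gamma(2-\alpha_1-\alpha_2)}\cdot 2^{1-\alpha_1-\alpha_2}\Gamma\!\bigl(\tfrac{2-\alpha_1-\alpha_2}{2}\bigr).
\]
Dividing by $2\sqrt\pi$ yields the first summand of Lemma~\ref{lem-1.6}; the prefactor $\varepsilon_{\DR}$ is carried correctly because $J_+$ comes from the reflected (boundary) part of the kernel.

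For $J_-$, split the domain by the sign of $\xi-\eta$ and substitute $\xi=\eta+s$ (resp.\ $\eta=\xi+s$) to reduce each piece to the classical Mellin integral
\[
\int_0^\infty \eta^{-\alpha_1}(\eta+s)^{-\alpha_2}d\eta=s^{1-\alpha_1-\alpha_2}\,\frac{\Gamma(1-\alpha_1)\Gamma(\alpha_1+\alpha_2-1)}{\Gamma(\alpha_2)},
\]
valid in the strip $\Re(\alpha_1)<1$, $\Re(\alpha_1+\alpha_2)>1$; the remaining $s$-integral is the same Gaussian moment as above. Adding the two symmetric contributions and dividing by $2\sqrt\pi$ produces the second summand of Lemma~\ref{lem-1.6}, with the factor $\Gamma(1-\alpha_1)/\Gamma(\alpha_2)+\Gamma(1-\alpha_2)/\Gamma(\alpha_1)$ and no $\varepsilon_{\DR}$ sign, since the free part of the kernel is blind to the boundary condition. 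One finally extends the identity from the strip to the full open set~(\ref{eqn-1.a}) using the analyticity assertion of Conjecture~\ref{conj-1.5}. The main obstacle is exactly this last step, together with the supporting claim that the $[0,L]$-to-half-line and cutoff-to-$1$ replacements do not contribute to the $t^{(1-\alpha_1-\alpha_2)/2}$ coefficient; the Gamma/Beta manipulations themselves are routine.
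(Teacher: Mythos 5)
Your proposal is correct and follows essentially the same route as the paper: a special-case computation on the (half-)interval with the image kernel $(4\pi t)^{-1/2}\bigl(e^{-(x-y)^2/4t}+\varepsilon_{\DR}e^{-(x+y)^2/4t}\bigr)$, taking $\SR=0$, reduction of the free and reflected Gaussian integrals to Beta/Gamma expressions (the paper quotes the Dirichlet/free piece from \cite{BG12} and evaluates the reflected piece via the substitution $x_2=x_1\sigma$ and Gradshteyn--Ryzhik, whereas you rederive both pieces with simplex and shear substitutions), exponential smallness of the cutoff/domain corrections, and analytic continuation via Conjecture~\ref{conj-1.5}. The only blemish is notational: as written, $J_\pm:=\int\int e^{-(\xi\mp\eta)^2/4}\eta^{-\alpha_1}\xi^{-\alpha_2}\,d\eta\,d\xi$ has the signs swapped relative to your subsequent use, since the integral multiplying $\varepsilon_{\DR}$ must be the reflected one with $e^{-(\xi+\eta)^2/4}$, which is indeed the one you evaluate as the Beta integral.
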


The following result is an immediate consequence of Lemma~\ref{lem-1.6}
using the usual properties of the $\Gamma$ function. We will give another proof
in Section~\ref{sect-4} that arises from certain functorial properties of the heat
content asymptotics.

\begin{lemma}\label{lem-1.7}
We have the recursion relations:
\begin{enumerate}
\item
$\displaystyle\varepsilon_{\DR,\alpha_1-2,\alpha_2}
=\frac{2(\alpha_1-2)(\alpha_1-1)}{3-\alpha_1-\alpha_2}
\varepsilon_{\DR,\alpha_1,\alpha_2}
$.
\medbreak\item
$\displaystyle\varepsilon_{\DR,\alpha_1,\alpha_2-2}
  =\frac{2(\alpha_2-2)(\alpha_2-1)}{3-\alpha_1-\alpha_2}
 \varepsilon_{\DR,\alpha_1,\alpha_2}
$.
  \medbreak\item
$\displaystyle
\displaystyle\varepsilon_{\DR,\alpha_1-1,\alpha_2-1}
=-\frac{2(\alpha_1-1)(\alpha_2-1)}
{3-\alpha_1-\alpha_2}
\varepsilon_{\RR/\DD,\alpha_1,\alpha_2}
$.
\end{enumerate}
\end{lemma}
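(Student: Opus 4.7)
The plan is to derive all three identities by direct substitution into Lemma~\ref{lem-1.6}, using only the functional equation $\Gamma(z+1)=z\Gamma(z)$. It is convenient to write the right-hand side of that formula as
\[
\varepsilon_{\DR,\alpha_1,\alpha_2}=\varepsilon_{\DR}\,A(\alpha_1,\alpha_2)+B(\alpha_1,\alpha_2),
\]
where $A$ is the first summand (carrying the sign $\varepsilon_{\DR}$, with numerator $\Gamma(1-\alpha_1)\Gamma(1-\alpha_2)$ and denominator $\Gamma(2-\alpha_1-\alpha_2)$) and $B$ is the interaction summand (with $\Gamma(\alpha_1+\alpha_2-1)$ and the bracket $\tfrac{\Gamma(1-\alpha_1)}{\Gamma(\alpha_2)}+\tfrac{\Gamma(1-\alpha_2)}{\Gamma(\alpha_1)}$). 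Both $A$ and $B$ are manifestly symmetric in $(\alpha_1,\alpha_2)$, so (2) follows from (1) by interchanging $\alpha_1$ and $\alpha_2$, and only (1) and (3) need verification.

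For (1), substitute $\alpha_1\mapsto\alpha_1-2$ and apply $\Gamma(z+1)=z\Gamma(z)$ to each Gamma factor. A short calculation shows that $A$ is multiplied by
\[
4\cdot\frac{2-\alpha_1-\alpha_2}{2}\cdot\frac{(\alpha_1-1)(\alpha_1-2)}{(3-\alpha_1-\alpha_2)(2-\alpha_1-\alpha_2)}=\frac{2(\alpha_1-1)(\alpha_1-2)}{3-\alpha_1-\alpha_2}.
\]
The same factor appears for $B$: each summand of its bracket picks up $(\alpha_1-1)(\alpha_1-2)$ (one from $\Gamma(3-\alpha_1)$, the other from $1/\Gamma(\alpha_1-2)$), while the additional signs coming from $(2-\alpha_1-\alpha_2)/(\alpha_1+\alpha_2-2)=-1$ and $(\alpha_1+\alpha_2-3)/(3-\alpha_1-\alpha_2)=-1$ cancel. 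Hence $\varepsilon_{\DR,\alpha_1-2,\alpha_2}$ equals this common factor times $\varepsilon_{\DR,\alpha_1,\alpha_2}$, which is (1).

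For (3), carry out the simultaneous shift $\alpha_1\mapsto\alpha_1-1$, $\alpha_2\mapsto\alpha_2-1$ and set $c:=\frac{2(\alpha_1-1)(\alpha_2-1)}{3-\alpha_1-\alpha_2}$. The same bookkeeping shows that $A$ rescales by $+c$, the positive sign coming from $(1-\alpha_1)(1-\alpha_2)=(\alpha_1-1)(\alpha_2-1)$ in the shifted numerator of $A$. In $B$, however, each summand of the bracket is now of mixed type $\Gamma(2-\alpha_i)/\Gamma(\alpha_j-1)$, producing a factor $(1-\alpha_i)(\alpha_j-1)=-(\alpha_1-1)(\alpha_2-1)$; this flips the sign, so $B$ rescales by $-c$. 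Therefore
\[
\varepsilon_{\DR,\alpha_1-1,\alpha_2-1}=c\,\varepsilon_{\DR}A-cB=-c\,(-\varepsilon_{\DR}A+B)=-c\,\varepsilon_{\RR/\DD,\alpha_1,\alpha_2},
\]
since $\varepsilon_{\RR/\DD,\alpha_1,\alpha_2}$ is the value of the same formula with $\varepsilon_{\DR}$ replaced by $-\varepsilon_{\DR}$, i.e.\ with the Dirichlet and Robin roles interchanged. This is (3).

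The only real obstacle is careful sign tracking in the $B$-term: the shifts $\Gamma(1-\alpha_i)\mapsto\Gamma(2-\alpha_i)$ and $\Gamma(\alpha_j)\mapsto\Gamma(\alpha_j-1)$ contribute factors of opposite sign, $(1-\alpha_i)$ versus $(\alpha_j-1)$. Under the shift of (1) these two signs appear inside each single summand of the bracket and cancel, preserving the common rescaling of $A$ and $B$; under the simultaneous shift of (3) they appear in distinct summands and combine to flip the global sign of $B$, which is precisely what produces the Dirichlet/Robin swap visible in (3).
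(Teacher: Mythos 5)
Your proof is correct. It spells out the route the paper itself treats as primary: the authors assert that Lemma~\ref{lem-1.7} ``is an immediate consequence of Lemma~\ref{lem-1.6} using the usual properties of the $\Gamma$ function,'' but leave the computation unwritten. Your decomposition $\varepsilon_{\DR,\alpha_1,\alpha_2}=\varepsilon_{\DR}A+B$ with $A$, $B$ symmetric in $(\alpha_1,\alpha_2)$, and the tracking of the $\Gamma(z+1)=z\Gamma(z)$ shifts, is exactly what that phrase means; your bookkeeping (the common rescaling factor for $A$ and $B$ in (1), symmetry giving (2), and the $\pm c$ split between $A$ and $B$ in (3) that amounts to $\varepsilon_{\DR}\mapsto-\varepsilon_{\DR}$) is accurate. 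The paper's written-out proof in Section~\ref{sect-4} is a genuinely different, functorial one: it applies the relations of Lemma~\ref{lem-4.4} (parts (5)--(8)) on the interval $[0,\pi]$ with $c=0$ at $j=0$, using the intertwining with $A=\partial_x$ and $A^*=-\partial_x$ to produce the Dirichlet/Robin swap in (3). That version is included as an ``independent check'' and because it reuses machinery already required for Theorem~\ref{thm-1.9}; yours is more elementary and self-contained. One small inaccuracy, confined to your closing heuristic remark and not to the computation itself: under the shift of (1), each summand of the $B$-bracket acquires its factor entirely from the numerator ($\Gamma(3-\alpha_1)$) or entirely from the denominator ($1/\Gamma(\alpha_1-2)$), not from a mixture of both types; the two cancelling signs within a single summand are the two $(-1)$'s in a double shift, e.g.\ $(2-\alpha_1)(1-\alpha_1)=(\alpha_1-2)(\alpha_1-1)$.
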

Note that the roles of Neumann and Dirichlet boundary conditions are interchanged in Assertion~(3).

\subsection{Heat content asymptotics for Dirichlet boundary conditions}
Dirichlet boundary conditions have been treated previously in
\cite{BG12,BGS}; we summarize those results as follows:

\begin{theorem}
If $(\alpha_1,\alpha_2)$ satisfy Equation~(\ref{eqn-1.a}), then:
\medbreak\quad
$\beta_{0,\alpha_1,\alpha_2}^{\partial M}(\phi,\rho,D,\mathcal{B}_{\mathcal{D}})
=\textstyle\int_{\partial M}
\varepsilon_{\mathcal{D},\alpha_1,\alpha_2}\langle\phi^0,\rho^0\rangle dy$,
\medbreak\quad
$\beta_{1,\alpha_1,\alpha_2}^{\partial M}(\phi,\rho,D,\mathcal{B}_{\mathcal{D}})=\textstyle\int_{\partial M}
\{-{\textstyle\frac12}(\varepsilon_{\mathcal{D},\alpha_1-1,\alpha_2}
+\varepsilon_{\mathcal{D},\alpha_1,\alpha_2-1})L_{aa}\langle\phi^0,\rho^0\rangle$
\smallbreak\qquad
$+\varepsilon_{\mathcal{D},\alpha_1-1,\alpha_2}\langle\phi^1,\rho^0\rangle
+\varepsilon_{\mathcal{D},\alpha_1,\alpha_2-1}\langle\phi^0,\rho^1\rangle\}dy$,
\medbreak\quad
$\beta_{2,\alpha_1,\alpha_2}^{\partial M}(\phi,\rho,D,\mathcal{B}_{\mathcal{D}})=\textstyle\int_{\partial M}
\{-{\textstyle\frac12}(\varepsilon_{\mathcal{D},\alpha_1-2,\alpha_2}
+\varepsilon_{\mathcal{D},\alpha_1-1,\alpha_2-1})L_{aa}\langle\phi^1,\rho^0\rangle$
\smallbreak\qquad
$+ \varepsilon_{\mathcal{D},\alpha_1,\alpha_2}\langle E\phi^0,\rho^0\rangle
+\varepsilon_{\mathcal{D},\alpha_1-2,\alpha_2}\langle\phi^2,\rho^0\rangle
+\varepsilon_{\mathcal{D},\alpha_1,\alpha_2-2}\langle\phi^0,\rho^2\rangle$
\smallbreak\qquad
$-{\textstyle\frac12}(\varepsilon_{\mathcal{D},\alpha_1-1,\alpha_2-1}
+\varepsilon_{\mathcal{D},\alpha_1,\alpha_2-2})L_{aa}\langle\phi^0,\rho^1\rangle$
\smallbreak\qquad
$+(-{\textstyle\frac14}\varepsilon_{\mathcal{D},\alpha_1-2,\alpha_2}
-{\textstyle\frac14}\varepsilon_{\mathcal{D},\alpha_1,\alpha_2-2}
+{\textstyle\frac12}\varepsilon_{\mathcal{D},\alpha_1,\alpha_2})
(L_{ab}L_{ab}+\hbox{\rm Ric}_{mm})\langle\phi^0,\rho^0\rangle$
\smallbreak\qquad
$-\varepsilon_{\mathcal{D},\alpha_1,\alpha_2}\langle\phi_{;a}^0,\rho_{;a}^0\rangle
   +0\tau\phi^0\rho^0
   +\varepsilon_{\mathcal{D},\alpha_1-1,\alpha_2-1}\langle\phi^1,\rho^1\rangle$
   \smallbreak\qquad
   $+(\textstyle\frac18\varepsilon_{\mathcal{D},\alpha_1-2,\alpha_2}
+\frac18\varepsilon_{\mathcal{D},\alpha_1,\alpha_2-2}
+\frac14\varepsilon_{\mathcal{D},\alpha_1-1,\alpha_2-1}
-\frac14\varepsilon_{\mathcal{D},\alpha_1,\alpha_2})L_{aa}L_{bb}
     \langle\phi^0,\rho^0\rangle\}dy$.
\end{theorem}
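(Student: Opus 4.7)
My plan is to start from Lemma~\ref{lem-1.4} specialized to Dirichlet boundary conditions, so that all terms involving $\SR$ are dropped, and then to identify the universal scalar functions $\varepsilon^\nu_{\DD,\alpha_1,\alpha_2}$ for $\nu=1,\dots,14$ in terms of the base quantity $\varepsilon_{\DD,\alpha_1,\alpha_2}$ supplied by Lemma~\ref{lem-1.6} (together with its shifts). The whole strategy rests on a few structural reductions supplemented by a single half-line model calculation.

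The main tool is a \emph{shift identity}: if $\phi\in\mathcal{K}_{\alpha_1}(V)$, then $r\phi\in\mathcal{K}_{\alpha_1-1}(V)$ with Taylor coefficients $(r\phi)^{j+1}=\phi^j$ and $(r\phi)^0=0$. Inserting $r\phi$ into the asymptotic series with parameters $(\alpha_1-1,\alpha_2)$ and comparing order by order in $t^{1/2}$ with the series obtained by viewing $r\phi$ as having a less singular leading exponent forces linear relations among the unknown $\varepsilon^\nu$. The analogous manoeuvre with $r\rho$ relates the $\rho$-side constants. In the simplest instances this immediately yields
\[
\varepsilon^1_{\DD,\alpha_1,\alpha_2}=\varepsilon_{\DD,\alpha_1-1,\alpha_2},\quad
\varepsilon^3_{\DD,\alpha_1,\alpha_2}=\varepsilon_{\DD,\alpha_1,\alpha_2-1},\quad
\varepsilon^{14}_{\DD,\alpha_1,\alpha_2}=\varepsilon_{\DD,\alpha_1-1,\alpha_2-1},
\]
and similarly $\varepsilon^4=\varepsilon_{\DD,\alpha_1-2,\alpha_2}$, $\varepsilon^7=\varepsilon_{\DD,\alpha_1,\alpha_2-2}$. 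Next, the extrinsic-curvature coefficients $\varepsilon^2,\varepsilon^5,\varepsilon^8$ come from the adapted-coordinate expansion $\sqrt{\det g}=1-rL_{aa}+O(r^2)$ near $\partial M$. Inserting this into $\beta$ and re-running the regularization of Section~\ref{sect-1.5} produces an $L_{aa}$-weighted contribution one power of $r$ deeper, so that the $L_{aa}$ coefficients are forced to be the stated $-\tfrac12$ symmetric combinations of $\varepsilon_{\DD,\alpha_1-1,\alpha_2}$ and $\varepsilon_{\DD,\alpha_1,\alpha_2-1}$ (and their further shifts at the next order). The $\varepsilon^6$ coefficient of $\langle E\phi^0,\rho^0\rangle$ then follows by Duhamel's principle applied to the perturbation $D\mapsto D+E$, since the first-order term is $\int_0^t\!\int_M\langle E\,e^{-sD_0}\phi,\rho\rangle\,dx\,ds$ whose leading boundary asymptotic reproduces the $\beta_0$-pattern with one extra factor of $t$.

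The subtle part is isolating the curvature-quadratic invariants $\{\operatorname{Ric}_{mm},L_{ab}L_{ab},L_{aa}L_{bb},\tau\}$ and the tangential-derivative term $\langle\phi^0_{;a},\rho^0_{;a}\rangle$. Here I would apply the functorial/product-manifold method: taking $M=N\times[0,\varepsilon]$ with $N$ flat kills all curvature, leaving only the normal contributions and pinning down $\varepsilon^{12}$ after a Fourier-mode computation on $N$; conformally rescaling the metric $g\mapsto e^{2f}g$ and differentiating at $f=0$ yields linear constraints that separate the coefficients of $\operatorname{Ric}_{mm}$, $L_{ab}L_{ab}$ and $\tau$; and matching to the smooth case $\alpha_1=\alpha_2=0$ (where the answer is the classical formula of \cite{BG94}) together with analytic continuation in $(\alpha_1,\alpha_2)$ justified by Conjecture~\ref{conj-1.5} fixes the remaining constants uniquely.

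The main obstacle will be the curvature-separation step: one must cleanly disentangle $\operatorname{Ric}_{mm}$ from $L_{ab}L_{ab}$ and $L_{aa}L_{bb}$, and in particular verify that the coefficient of $\tau$ is exactly zero. This zero is not accidental; it reflects the fact that for a scalar Laplace-type problem the putative $\tau$-dependence arises only through a conformal anomaly which, at this order and with these boundary conditions, cancels against the $L_{ab}L_{ab}$ contribution generated by the same conformal variation. Once all four quadratic-curvature coefficients are expressed as linear combinations of $\varepsilon_{\DD,\alpha_1,\alpha_2}$, $\varepsilon_{\DD,\alpha_1-2,\alpha_2}$ and $\varepsilon_{\DD,\alpha_1,\alpha_2-2}$, the stated formulas for $\varepsilon^9,\varepsilon^{10},\varepsilon^{11},\varepsilon^{12},\varepsilon^{13}$ fall out by solving a small linear system.
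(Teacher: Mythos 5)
Parts of your plan coincide with the machinery the paper actually uses (the statement itself is quoted from \cite{BG12,BGS}, but the tools are those of Sections~\ref{sect-3} and~\ref{sect-4}): your ``shift identity'' is exactly the index-shifting of Lemma~\ref{lem-4.3}, and your flat product $N\times[0,\varepsilon]$ computation for $\varepsilon^{12}$ is the product argument of Lemma~\ref{lem-4.2}. However, your derivation of the $L_{aa}$ coefficients is a step that would fail as described: expanding only the volume element, $\sqrt{\det g}=1-rL_{aa}+O(r^2)$, accounts for the measure in both the $\tilde x$ and $x$ integrations and yields $-(\varepsilon_{\DD,\alpha_1-1,\alpha_2}+\varepsilon_{\DD,\alpha_1,\alpha_2-1})L_{aa}\langle\phi^0,\rho^0\rangle$, i.e.\ \emph{twice} the stated coefficient; at the same order the heat kernel itself acquires corrections from the $r$-dependence of $g_{ab}$ (the first-order term $-L_{aa}\partial_r+\dots$ of the Laplacian in adapted coordinates), and it is these that supply the missing $+\tfrac12$. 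The paper's substitute is the warped-product identity of Lemma~\ref{lem-4.5}, where the relation $0=-\tfrac12\varepsilon^1-\varepsilon^2-\tfrac12\varepsilon^3$ is forced by the vanishing of the coefficient of $\sum_af_a'$, and the Bochner connection term $\omega_m=-\tfrac12\sum_af_a'$ is precisely where the factor $\tfrac12$ enters. Likewise your Duhamel formula for $\varepsilon^6$ drops the backward semigroup $e^{-(t-s)D}$ on the $\rho$-side; taken literally, $\int_0^t\int_M\langle E e^{-sD}\phi,\rho\rangle\,dx\,ds$ would give the coefficient $\frac2{3-\alpha_1-\alpha_2}\varepsilon_{\DD,\alpha_1,\alpha_2}$ rather than $\varepsilon_{\DD,\alpha_1,\alpha_2}$, whereas the product formula of Lemma~\ref{lem-4.2}(4) gives $\varepsilon^6=\varepsilon_{\DD,\alpha_1,\alpha_2}$, $\varepsilon^{13}=0$, $\varepsilon^{12}=-\varepsilon_{\DD,\alpha_1,\alpha_2}$ cleanly and simultaneously.

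The more serious gaps are in the quadratic-curvature step. ``Conformally rescaling $g\mapsto e^{2f}g$ and differentiating at $f=0$'' is not supported by any variational formula: unlike the heat trace, $\beta(\phi,\rho,e^{-2f}D_{e^{2f}g},\BB_\DD)$ has no simple relation to $\beta(\phi,\rho,D,\BB_\DD)$, and you do not supply one; the paper separates $\operatorname{Ric}_{mm}$, $L_{ab}L_{ab}$ and $L_{aa}L_{bb}$ by the warped product with independent $f_a$, where the three expressions $\sum_af_a''$, $\sum_a(f_a')^2$ and $\sum_{a,b}f_a'f_b'$ give independent linear relations, and $\varepsilon^{13}=0$ comes from the product with an arbitrary closed factor, not from a conformal-anomaly cancellation (your heuristic for the vanishing of the $\tau$ term is not an argument). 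Finally, ``matching to the smooth case $\alpha_1=\alpha_2=0$ together with analytic continuation'' cannot fix the remaining coefficients: the point $(0,0)$ is excluded by Equation~(\ref{eqn-1.a}) since $\alpha_1+\alpha_2\in\mathbb{Z}$ (and it sits at a pole of $\Gamma(\alpha_1+\alpha_2-1)$ in Lemma~\ref{lem-1.6}), and, more fundamentally, Conjecture~\ref{conj-1.5} asserts analyticity of the unknown functions $\varepsilon^\nu_{\DD,\alpha_1,\alpha_2}$; knowing an analytic function at a single point (or on a proper subvariety) does not determine it on the connected open set. In the paper analytic continuation is used only to propagate identities already established on open parameter ranges (e.g.\ $\Re(\alpha_i)<-1$ in Lemma~\ref{lem-4.4}, or the strip $\mathcal{S}$ in Section~\ref{sect-3}) to the full range, so your last step needs to be replaced by relations valid on an open set, such as those coming from Lemma~\ref{lem-4.4} and Lemma~\ref{lem-4.5}.
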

\subsection{Heat content asymptotics for Robin boundary conditions}
The following is the main result of this paper; it will be established
in Section~\ref{sect-4} using various functorial properties of these invariants:

\begin{theorem}\label{thm-1.9}
If $(\alpha_1,\alpha_2)$ satisfy Equation~(\ref{eqn-1.a}), then:
\medbreak$\beta_{0,\alpha_1,\alpha_2}^{\partial M}
     (\phi,\rho,D,\BB_\RR)=\displaystyle\int_{\partial M}
       \varepsilon_{\RR,\alpha_1,\alpha_2}
\langle\phi^0,\rho^0\rangle dy$.
\medbreak$
\beta_{1,\alpha_1,\alpha_2}^{\partial M}
(\phi,\rho,D,\BB_\RR)
=\displaystyle\int_{\partial M}\bigg\{
\Xone
\langle\phi^1,\rho^0\rangle+\Xthree\langle\phi^0,\rho^1\rangle$
\medbreak\quad$
\Xtwo\langle L_{aa}\phi^0,\rho^0\rangle$
\medbreak\quad$
+\Xfifteen\langle \SR\phi^0,\rho^0\rangle
\bigg\}dy$.
\medbreak$
\beta_{2,\alpha_1,\alpha_2}^{\partial M}
(\phi,\rho,D,\BB_\RR)
    =\displaystyle\int_{\partial M}\bigg\{
    \Xfour
    \langle\phi^2,\rho^0\rangle+\Xseven
\langle\phi^0,\rho^2\rangle$
 \medbreak\quad$\Xfive\langle L_{aa}\phi^1,\rho^0\rangle$
\medbreak\quad$\Xeight\langle L_{aa}\phi^0,\rho^1\rangle$
\medbreak$\quad
    \Xnine
     \langle(\operatorname{Ric}_{mm}+L_{ab}L_{ab})\phi^0,\rho^0\rangle$
\medbreak\quad$
+\{\Xten\}\langle L_{aa}L_{bb}\phi^0,\rho^0\rangle$
 \medbreak\quad$\Xtwelve
\langle\phi_{;a}^0,\rho^0_{;a}\rangle
+\Xthirteen\cdot
\langle\tau\phi^0,\rho^0\rangle$
\medbreak$\quad+
\Xsix\langle E
    \phi^0,\rho^0\rangle+\Xfourteen
\langle\phi^1,\rho^1\rangle$
\medbreak$\quad+\{\Xsixteen\}
\langle \SR^2\phi^0,\rho^0\rangle$
\medbreak\quad$+\Xseventeen\langle\SR\phi^1,\rho^0\rangle$
\medbreak\quad
$+\Xeighteen\langle\SR\phi^0,\rho^1\rangle$
\medbreak\quad$
+\left\{\Xnineteen\right\}\langle\SR L_{aa}\phi^0,\rho^0\rangle\bigg\}dy$.
\end{theorem}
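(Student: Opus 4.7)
The plan is to pin down the $19$ universal functions $\varepsilon^\nu_{\RR,\alpha_1,\alpha_2}$ whose existence is guaranteed by Lemma~\ref{lem-1.4}, and to identify each with the explicit shifted combination of $\varepsilon_{\RR,\cdot,\cdot}$'s appearing in the statement. The formula for $\beta_{0,\alpha_1,\alpha_2}^{\partial M}$ is already Lemma~\ref{lem-1.6}. The remaining $4$ constants in $\beta_1$ and $15$ constants in $\beta_2$ are attacked by the functorial method used by Branson--Gilkey and coauthors for the smooth category, adapted to the singular setting.

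First I would exploit dimensional analysis and product structures. Under the rescaling $g\mapsto\lambda^2 g$ (with induced weights $r\mapsto\lambda r$, $\phi\mapsto\lambda^{-\alpha_1}\phi$, $\rho\mapsto\lambda^{-\alpha_2}\rho$, $\SR\mapsto\lambda^{-1}\SR$, $E\mapsto\lambda^{-2}E$), each $\beta_{j,\alpha_1,\alpha_2}^{\partial M}$ must scale homogeneously; this both justifies the exponent $(1+j-\alpha_1-\alpha_2)/2$ and forces each $\varepsilon^\nu_{\RR,\alpha_1,\alpha_2}$ to be a specific shift of $\varepsilon_{\RR,\cdot,\cdot}$. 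On the product $M=N\times[0,L]$ with product coefficient data, the semigroup and hence the heat content factor; matching the two-sided expansion of the product against the product of one-sided expansions yields linear relations that express the higher-index universal constants in terms of $\varepsilon_{\RR,\alpha_1-1,\alpha_2}$, $\varepsilon_{\RR,\alpha_1,\alpha_2-1}$, $\varepsilon_{\RR,\alpha_1-2,\alpha_2}$, $\varepsilon_{\RR,\alpha_1,\alpha_2-2}$, and $\varepsilon_{\RR,\alpha_1-1,\alpha_2-1}$. The boundary-shift trick $\phi\mapsto r\phi$, which sends $\phi^j\mapsto\phi^{j-1}$ and $\alpha_1\mapsto\alpha_1-1$, then converts relations for coefficients of $\langle\phi^1,\rho^0\rangle$ into relations for coefficients of $\langle\phi^0,\rho^0\rangle$, pinning down the precise shifted indices in the statement.

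Second, the $\SR$-dependent constants $\varepsilon^{15},\varepsilon^{16},\varepsilon^{17},\varepsilon^{18},\varepsilon^{19}$ are extracted by expanding the Robin semigroup in powers of $\SR$ via Duhamel's principle, taking the Neumann realization $\SR=0$ as base point. The first-order term in $\SR$ produces $\varepsilon^{15},\varepsilon^{17},\varepsilon^{18},\varepsilon^{19}$ as boundary convolutions against the known Neumann data, and the second-order term produces $\varepsilon^{16}$. All non-$\SR$ constants are then cross-checked against the Dirichlet formulas of Section~1.8 through the Robin/Dirichlet interchange in Lemma~\ref{lem-1.7}(3), and the entire answer is verified on the interval $M=[0,L]$, where the Robin spectral resolution is explicit and the asymptotic expansion of $\beta$ can be computed term by term.

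The hard part will be the mixed coefficient $\varepsilon^{19}$ of $\langle\SR L_{aa}\phi^0,\rho^0\rangle$ together with the quadratic term $\varepsilon^{16}$: the flat half-space is blind to $L_{aa}$, and dimensional analysis alone cannot separate $\SR L_{aa}$ from other extrinsic interactions. To resolve them one must perform the Duhamel $\SR$-variation on a genuinely curved product cylinder $N\times[0,L]$ with $L_{aa}\neq 0$, and combine the result with Lemma~\ref{lem-1.7}(3) to package the constants into the displayed forms $\{\Xsixteen\}$ and $\{\Xnineteen\}$. Once this coupling is untangled, the other $\SR$-terms and all the non-$\SR$ terms fall out by the functorial relations above.
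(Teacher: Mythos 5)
Your outline reproduces two of the paper's ingredients (dimensional analysis/homogeneity and the index-shifting trick of Lemma~\ref{lem-4.3}), but it has a genuine gap on the coefficients that see the extrinsic geometry. A true product $N\times[0,L]$ with product data has totally geodesic boundary, so $L_{ab}=0$ there, and the interval or half-line is flat; consequently nothing in your scheme --- product factorization, index shifting, Duhamel in $\SR$, or verification on $[0,L]$ --- can determine $\varepsilon^2_{\RR,\alpha_1,\alpha_2}$ (hence $\varepsilon^5,\varepsilon^8$), $\varepsilon^9$, $\varepsilon^{10}$, $\varepsilon^{11}$, or $\varepsilon^{19}$. Your ``genuinely curved product cylinder with $L_{aa}\neq0$'' is self-contradictory as stated; what is actually needed is the warped-product family of Lemma~\ref{lem-4.5}, $ds^2=\sum_ae^{2f_a(r)}dy_a\circ dy_a+dr\circ dr$ on $\mathbb{T}^{m-1}\times[0,\pi]$, for which the heat content collapses to the one-dimensional computation while $L_{ab}=-f_a'\delta_{ab}$, $\operatorname{Ric}_{mm}$, $E$, $\phi^1,\phi^2,\rho^1,\rho^2$ and the effective $\SR=S_{\mathcal{R},0}+\frac12\sum_af_a'$ all acquire explicit expressions in $f_a',f_a''$; equating to zero the coefficients of $\sum_af_a'$, $S_{\mathcal{R},0}\sum_af_a'$, $\sum_af_a''$, $\sum_a(f_a')^2$ and $\sum_{a,b}f_a'f_b'$ is what produces the linear relations fixing these constants. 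Two points there are essential and absent from your plan: $\rho_M$ must carry the compensating volume factor $e^{-\sum_af_a}$ and be expanded with the \emph{dual} connection (otherwise the $\rho^\ell$ are wrong), and the Robin datum must be adjusted by $\omega_m$ so the boundary operator stays $\partial_r+S_{\mathcal{R},0}$.

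Your treatment of the $\SR$-dependent constants is also underdeveloped in a way that matters. A Duhamel expansion of the Robin semigroup about the Neumann one is a boundary (not potential) perturbation, and evaluating the resulting surface convolutions against data $r^{-\alpha_1}$ requires precisely the fine small-time boundary asymptotics you are trying to establish, so as written it is circular rather than a computation. The paper avoids this entirely: on $[0,\pi]$ it factorizes $D=A^*A=AA^*$ with $A=\partial_x+c$ and uses the intertwining identities of Lemma~\ref{lem-4.4}, e.g. $\partial_t\beta(\phi,\rho,D,\BB_\RR)(t)=-\beta(A^*\phi,A^*\rho,D,\BB_\DD)(t)$, valid first for $\Re(\alpha_1)<-1$, $\Re(\alpha_2)<-1$ and then extended by the analyticity of Conjecture~\ref{conj-1.5}; matching asymptotic series converts the Robin coefficients $\varepsilon^{15},\varepsilon^{16},\varepsilon^{17},\varepsilon^{18}$ into the already-known Dirichlet ones with shifted exponents, and the recursions of Lemma~\ref{lem-1.7} (note its Assertion~(3) swaps $\DD$ and $\RR$) put them in the displayed form; $\varepsilon^{19}$ and $\varepsilon^{10}$ then come from the warped-product relations above. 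Either supply an honest version of the boundary Duhamel formula with uniform control for singular data, or replace that step by the intertwining argument; and in both cases you still need the warped-product computation, without which the $L$-dependent part of Theorem~\ref{thm-1.9} is not determined by your relations.
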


\subsection{Outline of the paper}
Our purpose is to a large extent expository so
we shall give complete details rather than simply referring to previous
papers. In Section~\ref{sect-2}, we discuss the local
invariants we shall need. In Section~\ref{sect-3}, we make a special case computation on the
half-line to determine $\beta_0^{\partial M}$. In Section~\ref{sect-4}, we use functorial properties
of these invariants to determine $\beta_1^{\partial M}$ and $\beta_2^{\partial M}$.
We must work in great generality in the context of operators of Laplace type
in order to apply the functorial methods of Section~\ref{sect-4}. It is one of
the paradoxes in this subject that even if one is only interested in the scalar
Laplacian in flat space, it is necessary to derive completely general formulas
and then specialize them.

\section{Local invariants}\label{sect-2}
By assumption, we may express the asymptotic coefficients $\beta_{j,\alpha_1,\alpha_2}^{\partial M}$ as the integrals
of local invariants over the boundary:
\begin{equation}\label{eqn-2.a}
\beta_{j,\alpha_1,\alpha_2}^{\partial M}(\phi,\rho,D,\BB_\DR)=
   \int_{\partial M}\beta_{j,\alpha_1,\alpha_2}^{\partial M}(\phi,\rho,D,\BB_\DR)(y)dy\,.
\end{equation}
Fix a system of local coordinates $x=(x^1,...,x^m)$. Set $\partial_x^{\vec a}=(\partial_{x_1})^{a_1}...(\partial_{x_m})^{a_m}$. We have
\begin{equation}\label{eqn-2.b}
\beta_{j,\alpha_1,\alpha_2}^{\partial M}(\phi,\rho,D,\BB_\DR)(y)
=\beta_{j,\alpha_1,\alpha_2}^{\partial M}(\phi^\ell,\rho^\ell,\partial_x^{\vec a}g_{ij},\partial_x^{\vec b}A^k,\partial_x^{\vec c}B,
\partial_x^{\vec d}\SR)\,.
\end{equation}
Here we need only consider a finite number of jets; we omit the $\SR$ variables for Dirichlet boundary conditions and for
Robin boundary conditions only differentiate $\SR$ tangentially.
The local invariants are polynomial in the derivatives of the structures
involved with coefficients that depend
smoothly on the metric.

\subsection{A coordinate formulation}
We summarize the arguments briefly - this also serves
to motivate the power $(1+j-\alpha_1-\alpha_2)/2$ of $t$ in
Conjecture~\ref{conj-1.2}. Fix a point $P\in\partial M$ and choose local coordinates
$\vec y=(y^1,...,y^{m-1})$ on the boundary which are centered
at $P$. Let $\vec x:=(\vec y,r)$ be the associated adapted coordinates.
Let indices $a,b,...$ range from $1$ to $m-1$.
We then have
$$ds^2=g_{ab}dy^a\circ dy^b+dr\circ dr\text{ where }g_{ab}=g_{ab}(y,r)\,.$$
We further
normalize the choice of coordinates so that $g_{ab}(P)=\delta_{ab}$.
This eliminates
the smooth dependence on the metric tensor and ensures that the invariants of
Equation~(\ref{eqn-2.b}) are polynomial in the jets of the structures.
For a smooth function $f$ and a multi-index $\vec a=(a_1,...,a_m)$ we
let
$$f_{\vec a}=\partial_{x_1}^{a_1}...\partial_{x_m}^{a_m}f\,.$$
We define
$$\begin{array}{lll}
\operatorname{ord}(g_{ij,\vec a}):=|\vec a|,&
\operatorname{ord}(A^k_{\vec b}):=|\vec b|+1,&
\operatorname{ord}(B_{\vec c}):=|\vec c|+2,\\
\operatorname{ord}(\phi^\ell):=\ell,&
\operatorname{ord}(\rho^\ell):=\ell,&
\operatorname{ord}(S_{\mathcal{R},\vec d}):=|\vec d|+1\,.\vphantom{\vrule height 11pt}
\end{array}$$
Here, since $\SR$ is only defined on the boundary, $\vec d$ only reflects tangential indices;
we delete the $S_{\mathcal{R},\vec d}$ variables from consideration
for Dirichlet boundary conditions as they play no role.

\begin{lemma}\label{lem-2.1}
$\beta_{j,\alpha_1,\alpha_2}^{\partial M}(\phi,\rho,D,\BB)(y)$
is homogeneous of weighted order $j$.
\end{lemma}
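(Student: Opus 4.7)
The claim is a dimensional-analysis consequence of a one-parameter scaling of the geometry, in the same spirit as the justification of the power $t^{(1+j-\alpha_1-\alpha_2)/2}$ appearing in Conjecture~\ref{conj-1.2}. The plan is to introduce a rescaling, compare the two sides of the identity it produces, and read off the degree of homogeneity in weighted order.

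Set $\tilde g := \lambda^2 g$ and $\tilde D := \lambda^{-2} D$; then $\tilde D$ is of Laplace type for $\tilde g$, and for the Robin realization one correspondingly sets $\tilde \SR := \lambda^{-1} \SR$ so that the boundary condition transforms compatibly. Since $e^{-t\tilde D} = e^{-(t/\lambda^2)D}$ and the Riemannian measure picks up a factor of $\lambda^m$, the heat content satisfies
\[
\beta(\phi,\rho,\tilde D,\tilde \BB_\DR)(t) = \lambda^m\,\beta(\phi,\rho,D,\BB_\DR)(t/\lambda^2).
\]
Substituting the asymptotic expansion of Conjecture~\ref{conj-1.2} on both sides and matching powers of $t$ in the boundary series yields
\[
\beta_{j,\alpha_1,\alpha_2}^{\partial M}(\phi,\rho,\tilde D,\tilde \BB_\DR) = \lambda^{m-1-j+\alpha_1+\alpha_2}\,\beta_{j,\alpha_1,\alpha_2}^{\partial M}(\phi,\rho,D,\BB_\DR).
\]
Since $d\tilde y = \lambda^{m-1}\,dy$, the pointwise integrand of \eqref{eqn-2.a} must scale as $\lambda^{\alpha_1+\alpha_2-j}$ to produce this identity.

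It then remains to check that each elementary block of the local invariant scales by $\lambda$ to the power dictated by its weighted order. Work in adapted coordinates $\tilde x := \lambda x$ for the rescaled metric, which preserve the normalization $\hat{\tilde g}_{ab}(P)=\delta_{ab}$. Direct computation gives $\hat{\tilde g}_{ij,\vec a}(0)=\lambda^{-|\vec a|} g_{ij,\vec a}(0)$, and writing out the coordinate form of $\tilde D = \lambda^{-2} D$ produces $\tilde A^\sigma_{,\vec b}(0)=\lambda^{-1-|\vec b|}A^\sigma_{,\vec b}(0)$, $\tilde B_{,\vec c}(0)=\lambda^{-2-|\vec c|}B_{,\vec c}(0)$, and analogously $\tilde S_{\RR,\vec d}(0)=\lambda^{-1-|\vec d|}S_{\RR,\vec d}(0)$. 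The geodesic distance satisfies $\tilde r = \lambda r$, so the modified Taylor coefficients obey $\hat\phi^\ell(0)=\lambda^{\alpha_1-\ell}\phi^\ell(0)$ and $\hat\rho^\ell(0)=\lambda^{\alpha_2-\ell}\rho^\ell(0)$. By the bilinearity of $\beta$ in $(\phi,\rho)$, every monomial in the local invariant contains exactly one factor from $\{\phi^\ell\}$ and one from $\{\rho^\ell\}$; a monomial of total weighted order $w$, in the sense defined just before the lemma, therefore scales by $\lambda^{\alpha_1+\alpha_2-w}$. Comparison with the required $\lambda^{\alpha_1+\alpha_2-j}$ and the fact that the identity holds for every $\lambda>0$ force $w=j$, proving the lemma.

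The main obstacle is the simultaneous bookkeeping: one must preserve the coordinate normalization $g_{ab}(P)=\delta_{ab}$ after rescaling, insist that $\tilde D$ be of Laplace type for $\tilde g$, transform $\SR$ compatibly, and track the extra $\lambda^{\alpha_i}$ shifts coming from the $r^{-\alpha_i}$ prefactor in the singular expansions of $\phi$ and $\rho$. A secondary point is that the argument assumes Conjecture~\ref{conj-1.2} both for the existence of the expansion and for the specific power $(1+j-\alpha_1-\alpha_2)/2$; the latter is itself a consequence of the same scaling and can be derived simultaneously, so the two results are logically intertwined.
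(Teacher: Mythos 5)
Your proposal is correct and follows essentially the same route as the paper's own proof: the rescaling $g\mapsto c^2g$, $D\mapsto c^{-2}D$, $\SR\mapsto c^{-1}\SR$, matching powers of $t$ in the two expansions of the heat content, and then tracking how the metric jets, $A$, $B$, $\SR$, and the coefficients $\phi^\ell$, $\rho^\ell$ scale in rescaled adapted coordinates, with bilinearity extracting the factor $c^{\alpha_1+\alpha_2}$. The only points the paper makes explicit that you gloss over are the verification that the rescaled temperature satisfies the rescaled Robin condition and the caveat about divergence terms, neither of which affects the argument.
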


\begin{proof}
Fix $P\in\partial M$ and choose adapted coordinates so $g_{ab}(P)=\delta_{ab}$. Then  the local formula for
$\beta_{j,\alpha_1,\alpha_2}^{\partial M}$
is polynomial in the variables
$$\{g_{ij/\vec a}, A^k_{\vec b},B_{\vec c},\phi^\ell,\rho^\ell,S_{\mathcal{R},\vec d}\}_{|\vec a|>0}\,.$$
We use dimensional analysis. Let $c>0$,
and consider the operator $D_c:=c^{-2}D$.
It is then clear in a purely formal sense that
$e^{-t(c^{-2}D)}=e^{-(c^{-2}t)D}$ and thus if
$T_c$ is the temperature distribution defined by $D_c$, then
\begin{equation}\label{eqn-2.c}
T_c(x;t)=T(x;c^{-2}t)\,.
\end{equation} We justify this formal computation by
verifying that the relations of
Equation~(\ref{eqn-1.c}) are satisfied. Suppose that $T_c$ is defined by Equation~(\ref{eqn-2.c}). Then:
$$
(\partial_t+D_c)T_c(x;t)=c^{-2}\left\{\partial_tT+DT\right\}(x;c^{-2}t)=0\ \text{ and }\
\lim_{t\downarrow0}T(\cdot;c^{-2}t)=\phi(\cdot)\,.
$$
We must check the boundary conditions are satisfied by $T_c$; this is immediate with Dirichlet
boundary conditions so we set $\BB_{\mathcal{D}_c}:=\BB_{\mathcal{D}}$ and ignore the subscript.
The situation concerning Robin boundary conditions requires more work.
We use Lemma~\ref{lem-1.1} to see that $D_c$ and $D$
determine the same connection. However the normal rescales; $e_m(c^2g)=c^{-1}e_m$. Set:\begin{equation}\label{eqn-2.d}
\mathcal{B}_{\mathcal{R}_c}:=\nabla_{c^{-1}e_m}+S_{\mathcal{R},c}\text{ where }
S_{\mathcal{R},c}:=c^{-1}\SR\,.
\end{equation}
We complete the proof that $T_c=e^{-tD_c}\phi$ by verifying $T_c$ satisfies the rescaled
Robin boundary conditions:
$$
\mathcal{B}_{\mathcal{R}_c}T_c=
\left.c^{-1}\{(\nabla_{e_m}+\SR)T(\cdot,c^{-2}t)\}\right|_{\partial M}
=c^{-1}\mathcal{B}_{\mathcal{R}}T(\cdot,c^{-2t})=0\,.
$$

Since $g_c=c^2g$, $dx_c=c^mdx$. We verify that the interior invariants $\beta_n^M$ rescale
properly:
\begin{eqnarray}\label{eqn-2.e}
&&\beta_n^M(\phi,\rho,D_c)=(-1)^n/n!\cdot\mathcal{I}_{Reg}^{g_c}\{\langle D_c^n\phi,\rho\rangle\}\\
&=&(-1)^n/n~\cdot c^m\mathcal{I}_{Reg}^{g}\{\langle (c^{-2}D)^n\phi,\rho\rangle\}
=(-1)^n/n~\cdot c^{m-2n}\mathcal{I}_{Reg}^{g}\{\langle D^n\phi,\rho\rangle\}\nonumber\\
&=&c^{m-2n}\beta_n^M(\phi,\rho,D)\,.\nonumber
\end{eqnarray}
We apply Conjecture~\ref{conj-1.2} and use Equation~(\ref{eqn-2.e}) to expand:
\begin{eqnarray}
\beta(\phi,\rho,D_c,{\BB_\DR}_c)(t)
&\sim&\displaystyle\sum_{n=0}^\infty t^n
        c^{m-2n}\beta_n^M(\phi,\rho,D)\nonumber\\
&+&\displaystyle\sum_{j=0}^\infty t^{(1+j-\alpha_1-\alpha_2)/2}
   \beta_{j,\alpha_1,\alpha_2}^{\partial
   M}(\phi,\rho,D_c,{\BB_\DR}_c)\,.\label{eqn-2.f}
\end{eqnarray}
We apply Equation~(\ref{eqn-2.c}) to see:
\begin{eqnarray}
&&\beta(\phi,\rho,D_c,{\BB_\DR}_c)(t)=\displaystyle\int_M\langle T_c(x;t),\rho(x)\rangle dx_c
=c^m\int_M\langle T(x;c^{-2}t),\rho(x)\rangle dx\nonumber\\
&&\qquad=c^m\beta(\phi,\rho,D,\BB_\DR)(c^{-2}t)
\sim c^m\displaystyle\sum_{n=0}^\infty(c^{-2}t)^n\beta_n^M(\phi,\rho,D)\nonumber\\
    &&\qquad\qquad\qquad\qquad+c^m\sum_j(c^{-2}t)^{(1+j-\alpha_1-\alpha_2)/2}
   \beta_{j,\alpha_1,\alpha_2}^{\partial
   M}(\phi,\rho,D,\BB_\DR)\,.\label{eqn-2.g}
\end{eqnarray}
\medbreak\noindent We equate powers of $t$ in the asymptotic expansions of Equation~(\ref{eqn-2.f})
and Equation~(\ref{eqn-2.g}) to see:
\begin{equation}\label{eqn-2.h}
\beta_{j,\alpha_1,\alpha_2}^{\partial M}
(\phi,\rho,D_c,{\BB_\DR}_c)=c^{m+\alpha_1+\alpha_2-1-j}
\beta_{j,\alpha_1,\alpha_2}^{\partial M}
(\phi,\rho,D,\BB_\DR)\,.
\end{equation}
We use Equation~(\ref{eqn-2.a}). Since $dy_c=c^{m-1}dy$,
Equation~(\ref{eqn-2.h}) implies:
\medbreak
$\beta_{j,\alpha_1,\alpha_2}^{\partial M}(\phi,\rho,D_c,\BB_{{\DR}_c})=c^{m-1}\displaystyle\int_{\partial M}\beta_{j,\alpha_1,\alpha_2}^{\partial M}
(\phi,\rho,D_c,{\BB_\DR}_c)(y)dy$
\medbreak\ \
$=
c^{m-1-j+\alpha_1+\alpha_2}\beta_{j,\alpha_1,\alpha_2}^{\partial M}
(\phi,\rho,D,\BB_\DR)$
\medbreak\ \
$\displaystyle=c^{m-1-j+\alpha_1+\alpha_2}\int_{\partial M}\beta_{j,\alpha_1,\alpha_2}^{\partial M}
(\phi,\rho,D,\BB_\DR)(y)dy$.
\medbreak\noindent
Consequently (ignoring divergence terms), we have the following relation for the local invariants:
\begin{equation}\label{eqn-2.i}
\beta_{j,\alpha_1,\alpha_2}^{\partial M}(\phi,\rho,D_c,{\BB_\DR}_c)(y)
=c^{-j+\alpha_1+\alpha_2}\beta_{j,\alpha_1,\alpha_2}^{\partial M}
(\phi,\rho,D,\BB_\DR)(y)\,.\end{equation}

We wish to apply the formalism of Equation~(\ref{eqn-2.b}). We may expand:
$$
\phi\sim r_c^{-\alpha_1}(\phi_c^0+r_c\phi_c^1+...)\sim r^{-\alpha_1}(\phi^0+r\phi^1+...)\,.
$$
Equating terms in the asymptotic series, using the fact that $r_c=c\cdot r$, and arguing similarly with $\rho$ yields:
\begin{equation}\label{eqn-2.j}
\phi_c^\ell=c^{\alpha_1-\ell}\phi^\ell\quad\text{and}\quad
\rho_c^\ell=c^{\alpha_2-\ell}\rho^\ell\,.
\end{equation}
We renormalize the coordinate system and set
$\vec x_c=c\cdot\vec x$; $\partial_{x_{c,i}}=c^{-1}\partial_{x_i}$.
We introduce the coordinate systems $\vec x$ and $\vec x_c$ into the notation as
a computational aid; the local invariants $\beta_{j,\alpha_1,\alpha_2}^{\partial M}(y)$
do not, of course, depend on the choice of the coordinate system. We compute:
$$
g_{c,ij}(\vec x_c,P)=c^2g_{ij}(c^{-1}\partial_{x_i},c^{-1}\partial_{x_j})(\vec x,P)
=c^{-2}c^2g_{ij}(\vec x,P)=\delta_{ij}\,.
$$
Consequently, $g_{c,ij}(\vec x_c)=g_{ij}(\vec x)$ and we have that:
$$
g_{c,ij,\vec a}(\vec x_c,P)=c^{-|\vec a|}g_{ij,\vec a}(\vec x,P)\,.
$$
We expand:
\begin{eqnarray*}
D_c&=&c^{-2}(g^{ij}\partial_{x_i}\partial_{x_j}+A^k\partial_{x_k}+B)\\
&=&-(g^{ij}\partial_{x_{c,i}}\partial_{x_{c,j}}+c^{-1}A^k\partial_{x_{c,k}}+c^{-2}B)\\
&=&-(g^{ij}\partial_{x_{c,i}}\partial_{x_{c,j}}+A_c^k\partial_{x_{c,k}}+B_c)
\end{eqnarray*}
to see that
$A_c^k(\vec x_c,P)=c^{-1}A^k(\vec x,P)$ and $B_c(\vec x_c,P)=c^{-2}B(\vec x,P)$.
 Consequently
\begin{equation}\label{eqn-2.k}
A_{c,\vec b}^k(\vec x_c,P)=c^{-1-|\vec b|}A_{\vec b}^k(\vec x,P)\quad\text{and}\quad
B_{c,\vec c}(\vec x_c,P)=c^{-2-|\vec c|}B_{\vec c}(\vec x,P)\,.
\end{equation}
Similarly, by Equation~(\ref{eqn-2.d}) we have:
\begin{equation}\label{eqn-2.L}
S_{\mathcal{R},c,\vec d}(\vec x_c,P)=c^{-1-|\vec d|}S_{\mathcal{R},\vec d}(\vec x,P)\,.
\end{equation}

We use Equation~(\ref{eqn-2.j}), Equation~(\ref{eqn-2.k}), and
Equation~(\ref{eqn-2.L}) together with the observation that $\beta_j^{\partial M}$ is
bilinear in $(\phi,\rho)$ to see:
\begin{eqnarray}
&&\beta_{j,\alpha_1,\alpha_2}^{\partial M}(\phi,\rho,D_c,\BB_\DR)(\vec x_c,P)
\label{eqn-2.m}\\
&=&\beta_{j,\alpha_1,\alpha_2}^{\partial M}
(c^{\alpha_1-\ell}\phi^\ell,c^{\alpha_2-\ell}\rho^\ell,
c^{-|\vec a|}g_{ij,\vec a},c^{-1-|\vec b|}A^i_{\vec b},
c^{-2-|\vec c|}B_{\vec c},\\
&&\qquad\qquad\qquad c^{-1-|\vec d|}S_{\mathcal{R},\vec d})(\vec x,P)\nonumber\\
&=&c^{\alpha_1+\alpha_2}\beta_{j,\alpha_1,\alpha_2}^{\partial M}
(c^{-\ell}\phi^\ell,c^{-\ell}\rho^\ell,
c^{-|\vec a|}g_{ij,\vec a},c^{-1-|\vec b|}A^i_{\vec b},
c^{-2-|\vec c|}B_{\vec c},\\
&&\qquad\qquad\qquad c^{-1-|\vec d|}S_{\mathcal{R},\vec d})(\vec x,P)\,.\nonumber
\end{eqnarray}
We use Equation~(\ref{eqn-2.i}) and Equation~(\ref{eqn-2.m}) to conclude therefore:
\begin{eqnarray*}
&&\beta_{j,\alpha_1,\alpha_2}^{\partial M}
(c^{-\ell}\phi^\ell,c^{-\ell}\rho^\ell,
c^{-|\vec a|}g_{ij/\vec a},c^{-1-|\vec b|} A^k_{\vec b},
c^{-2-|\vec c|}B_{\vec c},c^{-1-|\vec d|}S_{\mathcal{R}/\vec d})(\vec x,P)\\
&&=c^{-j}
\beta_{j,\alpha_1,\alpha_2}^{\partial M}
(\phi^\ell,\rho^\ell,
g_{ij/\vec a}, A^k_{\vec b},
B_{\vec c},S_{\mathcal{R},\vec d})(\vec x,P)\,.
\end{eqnarray*}
It now follows that $\beta_{j,\alpha_1,\alpha_2}^{\partial M}$ is homogeneous of weighted
order $j$. 
 There can be divergence terms which integrate to zero on the boundary and which
are not controlled by this analysis. These terms play no role in the asymptotic coefficients and
may therefore be ignored.
\end{proof}

\subsection{A tensorial formulation}\label{sect-2.2}
We have just discussed the
homogeneity property of the invariants $\beta_j$ from a coordinate point of view.
We now consider the same property from a more invariant point of view.
Fix a point $P\in\partial M$. Choose geodesic coordinates on the boundary centered at $P$.
Form an adapted coordinate system for $P$ in $M$.
Then the only non-zero first derivatives of the metric are
$g_{ab,m}(P)=\partial_{x_m}g_{ab}(P)$. The {\it second fundamental form} is given by setting:
$$L_{ab}:=g(\nabla_{\partial_y^a}\partial_y^b,\partial_r)=\Gamma_{abm}
=-\textstyle\frac12g_{ab,m}\,.$$
Keeping in mind the necessity to preserve the condition $g_{ab}(P)=\delta_{ab}$
and observing that the Levi-Civita connection is unchanged by rescaling,
we see that:
$$L_{ab}(g_c)=c^2g(\nabla_{c^{-1}\partial_{y_a}}c^{-1}\partial_{y_b},c^{-1}\partial_r)
=c^{-1}L_{ab}(g)\,.$$
Thus $L$ is homogeneous of degree $1$ under rescaling. Similarly we have:
\begin{eqnarray*}
R_{ijkl}(g_c)(P)&=&c^2g((\nabla_{c^{-1}\partial_{x_i}}\nabla_{c^{-1}\partial_{x_j}}
-\nabla_{c^{-1}\partial_{x_j}}\nabla_{c^{-1}\partial_{x_i}})c^{-1}\partial_{x_k},c^{-1}
\partial_{x_l})\\
&=&c^{-2}R_{ijkl}(g)(P),
\end{eqnarray*}
so that $R_{ijkl}$ has order 2.  Arbitrary partial derivatives of the metric at $P$ may
now be expressed in terms of these tensors and their covariant derivatives;
there are universal curvature
relations (see, for example, the discussion in \cite{GPS12}) but these play no role.
Rather than considering tangential derivatives of $\SR$, we consider
covariant derivatives of $\SR$ with respect to the connection defined by $D$
and the Levi-Civita
connection on the boundary. Let $\Omega_{ij}$ be the curvature of the
connection defined by $D$. Let `;' denote the components of multiple covariant differentiation
with respect to the Levi-Civita connection of $M$ and the connection $\nabla$ defined by $D$
and let `:' defined similarly using the Levi-Civita connection of $\partial M$ and $\nabla$; we
can only differentiate $L$ and $\SR$ tangentially. Each covariant derivative adds $1$ to
the order. The derivatives of the total symbol of $D$ may
then be expressed in terms of these variables so we may regard
$\beta_{j,\alpha_1,\alpha_2}(\phi,\rho,D,\mathcal{B})(y)$
as a polynomial
in the variables
$$
\{\phi^\ell,\rho^\ell,L_{a_1a_2:a_3...a_k},R_{i_1i_2i_3i_4;i_5...i_k},
E_{;i_1...i_k},S_{\mathcal{R}:b_1...b_k},\Omega_{i_1i_2;i_3...i_k}\}
$$
which is invariantly defined and which is homogeneous of total order $j$.
H. Weyl's theorem \cite{W46} then lets us write this in terms of
contractions of indices; the structure group is the orthogonal group $O(m-1)$ so the index $m$
plays a distinguished role.
\subsection{The proof of Lemma~\ref{lem-1.4}}
Lemma~\ref{lem-1.4} now follows from Lemma~\ref{lem-2.1} and
the discussion in Section~\ref{sect-2.2} once a suitable basis of Weyl invariants is written down.
We integrate by parts and ignore divergence terms to replace the invariants
$\int_{\partial M}\langle\phi^0_{:aa},\rho^0\rangle dy$ and
$\int_{\partial M}\langle\phi^0,\rho_{:aa}^0\rangle dy$ by
$-\int_{\partial M}\langle\phi^0_{:a},\rho^0_{:a}\rangle dy$.\hfill\qed

\section{The proof of Lemma~\ref{lem-1.6}}\label{sect-3}
We must determine
the coefficient $\varepsilon_{\DR,\alpha_1,\alpha_2}$ of Lemma~\ref{lem-1.4}
which describes $\beta_{0,\alpha_1,\alpha_2}^{\partial M}(\phi,\rho,D,\BB)$.
We begin with a computation on the strip:
$$
\mathcal{S}:=\{(\alpha_1,\alpha_2)\in\mathbb{R}^2: \alpha_1<1,\
\alpha_2<1,\ \alpha_1+\alpha_2>1\}\,.
$$
The Dirichlet setting was discussed in \cite{BG12,BGG12} so we  concentrate
on Robin boundary conditions; we may set $\SR=0$ as this plays no role in
$\beta_0^{\partial M}$ and simplify
matters by considering Neumann boundary conditions. Let $M=[0,\infty)$, let $D=-\partial_x^2$,
and let $K_\DN(x,y;t)$ be the Dirichlet or Neumann heat kernel on $M$ for $t>0$. Let $\chi$ be a plateau
function which is identically equal to $1$ near $x=0$ and which has
compact support in $[0,\epsilon)$ for $\epsilon$ small. Set
\begin{equation}\label{eqn-3.a}
\phi_{\alpha_1}(x):=x^{-\alpha_1}\chi(x)
\quad\text{and}\quad
\rho_{\alpha_2}(x):=x^{-\alpha_2}\chi(x)\,.
\end{equation}
We shall use this notation subsequently as well.
We have:
$$\phi^\ell=\left\{\begin{array}{ll}1&\text{ if }\ell=0\\0&\text{ if }\ell>0\end{array}\right\}\text{ and }
    \rho^\ell=\left\{\begin{array}{ll}1&\text{ if }\ell=0\\0&\text{ if }\ell>0\end{array}\right\}\,.
    $$
The heat content on $[0,\infty)$ is:
$$
\beta(\phi_{\alpha_1},\rho_{\alpha_2},D,\BB_\DN)(t) =\int_0^\infty\int_0^\infty
K_{\BB_\DN}(x_1,x_2;t)\phi_{\alpha_1}(x_1)\rho_{\alpha_2}(x_2)dx_1dx_2.
$$
Then $\beta(\phi_{\alpha_1},\rho_{\alpha_2},D,\BB_{\mathcal{N}})(t)$ is finite for all $t>0$
if and only if $\alpha_1<1$ and $\alpha_2<1$. We also suppose that
$\alpha_1+\alpha_2>1$. In \cite{BG12} we have shown that
$$
\beta(\phi_{\alpha_1},\rho_{\alpha_2},D,\BB_\DD)(t)=
\varepsilon_{\DD,\alpha_1,\alpha_2}t^{(1-\alpha_1-\alpha_2)/2}
+O(1)\,.
$$
In the special case of the half-line we have that
$$K_{\mathcal B_N}(x_1,x_2;t)=K_{\mathcal B_D}(x_1,x_2;t)+
2(4\pi)^{-1/2}e^{-(x_1+x_2)^2/(4t)}\,.
$$
We compute:
\begin{eqnarray}
&&\beta(\phi_{\alpha_1},\rho_{\alpha_2},D,\BB_{\mathcal{N}})(t)
=\beta(\phi_{\alpha_1},\rho_{\alpha_2},D,\BB_\DD)(t)\nonumber \\
&&\quad+2(4\pi
t)^{-1/2}\int_0^\infty\int_0^\infty
e^{-(x_1+x_2)^2/(4t)}x_1^{-\alpha_1} x_2^{-\alpha_2}dx_1dx_2\label{eqn-3.b}\\
&&\quad +2(4\pi t)^{-1/2}\int_0^\infty\int_0^\infty
e^{-(x_1+x_2)^2/(4t)}x_1^{-\alpha_1}
x_2^{-\alpha_2}(\chi(x_1)\chi(x_2)-1)dx_1dx_2.
\nonumber\end{eqnarray}
The second term in the right hand side above is evaluated by a
change of variable $x_2=x_1\sigma$. By Tonelli's Theorem we have that it
equals \medbreak\quad $2(4\pi
t)^{-1/2}\displaystyle\int_0^\infty\int_0^\infty
e^{-(x_1+x_2)^2/4t}
       x_1^{-\alpha_1} x_2^{-\alpha_2}dx_1dx_2$
\medbreak\quad $=2\displaystyle(4\pi
t)^{-1/2}\int_0^\infty\int_0^\infty x_1^{1-\alpha_1-\alpha_2}
\sigma^{-\alpha_2}e^{-x_1^2(1+\sigma^2)/4t}d\sigma dx_1$
\medbreak\quad $=2(4\pi
t)^{-1/2}\displaystyle\int_0^\infty\int_0^\infty
\sigma^{-\alpha_2}x_1^{1-\alpha_1-\alpha_2}
e^{-x_1^2(1+\sigma)^2/4t}dx_1d\sigma$ \medbreak\quad
$=2^{1-\alpha_1-\alpha_2}\pi^{-1/2}\displaystyle
\Gamma\left(\frac{2-\alpha_1-\alpha_2}2\right)
t^{(1-\alpha_1-\alpha_2)/2}
\int_0^\infty\sigma^{-\alpha_2}(1+\sigma)^{\alpha_1+\alpha_2-2}d\sigma.$
\medbreak\noindent Note that if $\alpha_1<1$ and $\alpha_2<1$ then
$$
\int_0^\infty\sigma^{-\alpha_2}(1+\sigma)^{\alpha_1+\alpha_2-2}d\sigma
=\int_0^1(\sigma^{-\alpha_1}+\sigma^{-\alpha_2})
(1+\sigma)^{\alpha_1+\alpha_2-2}d\sigma\,.
$$
We
will show that the third term in the right hand side of Equation~(\ref{eqn-3.b})
 vanishes up to all orders. First note that there exists
$\epsilon>0$ so $\chi(x)=1, 0\le x \le \epsilon$. Secondly
note that there exists a constant $C<\infty$ so $|\chi(x)|\le C, x\ge
0$. Hence
$$|\chi(x_1)\chi(x_2)-1|\le
(C^2+1)1_{[\epsilon,\infty)}(x_1)1_{[\epsilon,\infty)}(x_2)\,.
$$
 It
follows that the third term in the right hand side of Equation (\ref{eqn-3.b})
is bounded in absolute value by
\medbreak\quad
$\displaystyle2(C^2+1)(4\pi t)^{-1/2}\int_{\epsilon}^\infty\int_{\epsilon}^\infty
e^{-(x_1+x_2)^2/(4t)}x_1^{-\alpha_1} x_2^{-\alpha_2}dx_1dx_2$

\medbreak\quad
$\displaystyle\quad\le(C^2+1)t^{-1/2}e^{-\epsilon^2/(4t)}\int_0^\infty\int_0^\infty
e^{-(x_1^2+x_2^2)/(8t)}x_1^{-\alpha_1}x_2^{-\alpha_2}dx_1dx_2$
\medbreak\quad
$\displaystyle\quad=O(e^{-\epsilon^2/(5t)})$.

\medbreak By Lemma~\ref{lem-1.4}, we have
$\beta_{0,\alpha_1,\alpha_2}^{\partial M}(\phi,\rho,D,\BB)=\displaystyle\int_{\partial M}
\varepsilon_{\DR,\alpha_1,\alpha_2}\langle\phi^0,\rho^0\rangle dy$.
The above discussion permits us to decompose:
\medbreak\quad
$\displaystyle\varepsilon_{\mathcal{B}_\DR,\alpha_1,\alpha_2}
=\varepsilon_{\mathcal{B}_\DR}\kappa_{\alpha_1,\alpha_2}+\tilde\kappa_{\alpha_1,\alpha_2}$ where
\medbreak\quad
$\displaystyle\kappa_{\alpha_1,\alpha_2}
:=2^{-\alpha_1 -\alpha_2}\pi^{-1/2}
\Gamma(\frac{2-\alpha_1-\alpha_2}2)
\int_0^1
(\sigma^{-\alpha_1}+\sigma^{-\alpha_2})(1+\sigma)^{\alpha_1+\alpha_2-2}d\sigma$
\medbreak\quad
$\displaystyle\tilde\kappa_{\alpha_1,\alpha_2}
:=2^{-\alpha_1 -\alpha_2}\pi^{-1/2}
\Gamma(\frac{2-\alpha_1-\alpha_2}2)
\int_0^1(\sigma^{-\alpha_1}+\sigma^{-\alpha_2})(1-\sigma)^{\alpha_1+\alpha_2-2}d\sigma$.
\medbreak\noindent
The integral defining $\kappa_{\alpha_1,\alpha_2}$ clearly is well defined
for $\alpha_1<1$ and $\alpha_2<1$. By replacing $\sigma$ by $\frac1\sigma$, we see that the integral with
respect to $\sigma$ over $[0,1]$ defining
$\kappa_{\alpha_1,\alpha_2}$ is equal to the integral with respect to
$\sigma$ over $[1,\infty)$. Thus the integral with respect to $\sigma$ over
$[0,1]$ is $\frac12$ the integral over $[0,\infty)$. We may now use
 Gradshteyn and Ryzhik \cite{grad65} (see 3.251.11) and
 Gradshteyn and  Ryzhik \cite{grad65} (see 8.3801 and 8.384.1) to evaluate
 these integrals and establish Lemma~\ref{lem-1.6} on the strip.
 By Conjecture~\ref{conj-1.5}, we may then use analytic continuation
 to establish Lemma~\ref{lem-1.6} for the full parameter range given in Equation~(\ref{eqn-1.a}).\hfill\qed

\section{The proof of Theorem~\ref{thm-1.9}: functorial properties}\label{sect-4}
In Section~\ref{sect-4}, we determine the unknown coefficients
$\varepsilon_{\BB,\alpha_1,\alpha_2}^\nu$
of Lemma~\ref{lem-1.4} to complete the proof of Theorem~\ref{thm-1.9}.
\subsection{Direct sum and product formulas}
\begin{lemma}
The coefficients of Lemma~\ref{lem-1.4} are independent
of the fiber dimension of the underlying vector bundle.
\end{lemma}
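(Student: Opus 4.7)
The plan is to run the standard direct-sum functoriality argument. Given two Laplace-type operators $D_i$ on vector bundles $V_i$ over the same $(M,g)$, with boundary operators of the same type (both Dirichlet, or Robin with auxiliary endomorphisms $S_{\mathcal{R}}^i$), form $D := D_1 \oplus D_2$ on $V := V_1 \oplus V_2$. By Lemma~\ref{lem-1.1}, $D$ is again of Laplace type, with Bochner data $(\nabla, E) = (\nabla_1 \oplus \nabla_2,\, E_1 \oplus E_2)$; in the Robin case we take $\SR := \SR^1 \oplus \SR^2$, so that $\BB_\RR = \BB_\RR^1 \oplus \BB_\RR^2$. Consequently the realization decomposes as $D_{\BB_\DR} = (D_1)_{\BB^1_\DR} \oplus (D_2)_{\BB^2_\DR}$, the semigroup is block-diagonal, and for $\phi = \phi_1 \oplus \phi_2$, $\rho = \rho_1 \oplus \rho_2$ the natural pairing splits, giving
\begin{equation*}
\beta(\phi,\rho,D,\BB_\DR)(t) = \beta(\phi_1,\rho_1,D_1,\BB^1_\DR)(t) + \beta(\phi_2,\rho_2,D_2,\BB^2_\DR)(t).
\end{equation*}
Matching coefficients of $t^{(1+j-\alpha_1-\alpha_2)/2}$ in the asymptotic expansion of Conjecture~\ref{conj-1.2} yields additivity of each $\beta^{\partial M}_{j,\alpha_1,\alpha_2}$ under $\oplus$.

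Now temporarily write $\varepsilon^\nu_{\DR,\alpha_1,\alpha_2}(k)$ to record the a priori dependence on the fiber rank $k = \dim V$. Every local invariant appearing in Lemma~\ref{lem-1.4} -- namely $\langle \phi^\ell, \rho^{\ell'}\rangle$, $\langle \SR \phi^\ell, \rho^{\ell'}\rangle$, $\langle \SR^2\phi^0,\rho^0\rangle$, $\langle E\phi^0,\rho^0\rangle$, $\langle \phi^0_{;a},\rho^0_{;a}\rangle$, and the purely geometric pairings -- is additive under the direct sum, since $\nabla$, $E$, $\SR$, and the pairing all act blockwise. Inserting this into the additivity of $\beta^{\partial M}_{j,\alpha_1,\alpha_2}$ forces, for every index $\nu$ and every choice of $(V_i, D_i, \phi_i, \rho_i, \SR^i)$,
\begin{equation*}
\varepsilon^\nu_{\DR,\alpha_1,\alpha_2}(k_1+k_2)\bigl(I^\nu_1 + I^\nu_2\bigr)
 = \varepsilon^\nu_{\DR,\alpha_1,\alpha_2}(k_1)\, I^\nu_1 + \varepsilon^\nu_{\DR,\alpha_1,\alpha_2}(k_2)\, I^\nu_2,
\end{equation*}
where $I^\nu_i$ denotes the $\nu$-th local invariant formed from the data on $V_i$. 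Taking $V_1, V_2$ trivial with independently tunable scalar data (so that $I^\nu_1$ and $I^\nu_2$ may be prescribed to be arbitrary scalars), we conclude $\varepsilon^\nu_{\DR,\alpha_1,\alpha_2}(k_1+k_2) = \varepsilon^\nu_{\DR,\alpha_1,\alpha_2}(k_1) = \varepsilon^\nu_{\DR,\alpha_1,\alpha_2}(k_2)$, which gives independence of fiber dimension.

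The only substantive step to verify is the compatibility of the direct-sum construction with Lemma~\ref{lem-1.1} and with the Robin boundary operator -- i.e.\ that the Bochner data of $D_1 \oplus D_2$ are really $(\nabla_1 \oplus \nabla_2,\, E_1 \oplus E_2)$ and that $\BB_\RR^{1\oplus 2} = \BB_\RR^1 \oplus \BB_\RR^2$; this is a direct check from the uniqueness clause in Lemma~\ref{lem-1.1} together with $e_m(g)$ being unchanged by tensoring with a trivial bundle. Once this is in hand, the remainder of the argument is purely algebraic and simultaneously handles the Dirichlet case (where the $\SR$ terms are absent) and the Robin case.
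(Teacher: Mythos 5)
Your argument is correct and follows essentially the same route as the paper: decompose as a direct sum $V=V_1\oplus V_2$, $D=D_1\oplus D_2$, $\SR=\SR^1\oplus\SR^2$, observe that the heat content is additive, match boundary coefficients in the expansion, and then use the freedom to prescribe the local data independently on the two summands to force the universal constants to agree across ranks. The paper compresses the final bookkeeping into ``the desired result now follows,'' whereas you spell it out; just note that to pass from the displayed per-$\nu$ identity to independence in full generality you want $V_1$ of arbitrary rank $k_1$ (with all $I^\nu_1$ tuned to zero) paired with a trivial line bundle $V_2$, rather than both $V_1,V_2$ trivial, so that $\varepsilon^\nu(k_1+1)=\varepsilon^\nu(1)$ for every $k_1$.
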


\begin{proof}
Let $V=V_1\oplus V_2$ be a direct sum vector bundle over $M$, let $D=D_1\oplus D_2$, let
$\phi=\phi_1\oplus\phi_2$, and let  $\rho=\rho_1\oplus\rho_2$. With
Robin boundary conditions, we take $\SR=S_{\mathcal{R},1}\oplus S_{\mathcal{R},2}$. The problem
decouples and consequently
$$\beta(\phi,\rho,D,\BB)(t)=\beta(\phi_1,\rho_1,D_1,\BB)(t)
+\beta(\phi_2,\rho_2,D_2,\BB)(t)\,.$$
 This implies that:
$$\beta_{j,\alpha_1,\alpha_2}^{\partial M}(\phi,\rho,D,\BB)
=\beta_{j,\alpha_1,\alpha_2}^{\partial M}(\phi_1,\rho_1,D_1,\BB)
+\beta_{j,\alpha_1,\alpha_2}^{\partial M}(\phi_2,\rho_2,D_2,\BB)\,.
$$
The desired result now follows; again, we ignore divergence terms when passing to the local formulas.
\end{proof}

Suppose that $(M_1,g_1)$ is a closed Riemannian manifold of dimension $m_1$
and that $(M_2,g_2)$ is a Riemannian manifold with boundary of dimension $m_2$.
We form $(M,g):=(M_1\times M_2,g_1\oplus g_2)$. If $D_i$ are operators of
Laplace type on $M_i$, we form $D=D_1\otimes 1+1\otimes D_2$ on
$V:=V_1\otimes V_2$.
\begin{lemma}\label{lem-4.2}
Adopt the notation established above. We have:
\begin{enumerate}
\item $\beta(\phi,\rho,D,\BB_\DR)(t)=\beta(\phi_1,\rho_1,D_1)(t)\cdot
\beta(\phi_2,\rho_2,D_2,\BB_\DR)(t)$.
\medbreak\item
$\displaystyle\beta_{j,\alpha_1,\alpha_2}^{\partial M}(\phi,\rho,D,\BB_\DR)(x_1,y_2)$
\medbreak$\displaystyle
=\sum_{k+2\ell=j}(-1)^\ell\textstyle\frac1{\ell!}
\langle D^\ell\phi_1,\rho_1\rangle(x_1)
\cdot
\beta_{k,\alpha_1,\alpha_2}^{\partial M}(\phi_2,\rho_2,D_2,\BB_\DR)(y_2)$.
\item The coefficients of Lemma~\ref{lem-1.4} are independent of the dimension $m$.
\item
$\varepsilon_{\DR,\alpha_1,\alpha_2}^6
=\varepsilon_{\DR,\alpha_1,\alpha_2}$,
$\varepsilon_{\DR,\alpha_1,\alpha_2}^{13}=0$,
$\varepsilon_{\DR,\alpha_1,\alpha_2}^{12}
=-\varepsilon_{\DR,\alpha_1,\alpha_2}$.
\end{enumerate}\end{lemma}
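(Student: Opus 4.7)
The overall plan is to handle assertions (1) and (2) by heat-kernel factorization combined with coefficient-matching in the asymptotic expansion, then use (2) as a lever to derive (3) and (4) from strategic product constructions. For (1), on $V = V_1\otimes V_2$ with $D = D_1\otimes 1 + 1\otimes D_2$ and $\partial M = M_1\times\partial M_2$, the operators $D_1\otimes 1$ and $1\otimes D_2$ commute and the boundary condition $\BB_\DR$ touches only the second factor, so $e^{-tD} = e^{-tD_1}\otimes e^{-tD_2}$ preserves $\BB_\DR$ and the heat kernels factor accordingly. For tensor data $\phi = \phi_1\otimes\phi_2$ and $\rho = \rho_1\otimes\rho_2$ the heat-content integral separates, and bilinearity in $(\phi,\rho)$ combined with the fiber-dimension independence established in the previous lemma extends the identity to general $\phi,\rho$. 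For (2), I apply Conjecture~\ref{conj-1.2} on each factor; only integer powers $t^\ell$ occur on the closed manifold $M_1$, giving $\beta(\phi_1,\rho_1,D_1)(t)\sim\sum_\ell t^\ell\beta_\ell^{M_1}$ with $\beta_\ell^{M_1} = \frac{(-1)^\ell}{\ell!}\int_{M_1}\langle D_1^\ell\phi_1,\rho_1\rangle\,dx_1$. Multiplying the two asymptotic series via (1) and collecting the coefficient of the fractional power $t^{(1+j-\alpha_1-\alpha_2)/2}$ forces $k+2\ell=j$; this yields the claimed local formula after discarding divergence terms as in Lemma~\ref{lem-2.1}.

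For (3), I specialize (2) to $M_1 = S^1$ with its flat metric and $\phi_1 = \rho_1 = 1$. Then $D_1\phi_1 = 0$, so only $\ell = 0$ contributes and (2) collapses to $\beta_{j,\alpha_1,\alpha_2}^{\partial M}(x_1,y_2) = \beta_{j,\alpha_1,\alpha_2}^{\partial M_2}(y_2)$. The product geometry is cylindrical ($L = 0$, $\operatorname{Ric}_{mm} = 0$, $\tau = 0$, no cross-curvature terms), so the Lemma~\ref{lem-1.4} expansion in ambient dimension $m = m_2+1$ reduces term-by-term to the expansion in dimension $m_2$. Matching Weyl invariants forces $\varepsilon^\nu_{\DR,\alpha_1,\alpha_2}(m_2+1) = \varepsilon^\nu_{\DR,\alpha_1,\alpha_2}(m_2)$, and iteration gives dimension independence.

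For (4), I apply (2) with $M_2 = [0,\infty)$, $D_2 = -\partial_x^2$, $\phi_2(x) = x^{-\alpha_1}\chi(x)$, $\rho_2(x) = x^{-\alpha_2}\chi(x)$, $\SR = 0$, and $M_1$ an arbitrary closed Riemannian manifold carrying a scalar $D_1$ with arbitrary endomorphism $E_1$. With these choices $\phi^0 = \phi_1$, $\rho^0 = \rho_1$, $\phi^j = \rho^j = 0$ for $j\ge 1$, $L = 0$, $\operatorname{Ric}_{mm} = 0$, $\tau = \tau_1$, and $E = E_1$, so Lemma~\ref{lem-1.4} reduces to
\[
\beta_{2,\alpha_1,\alpha_2}^{\partial M} = \int_{M_1}\bigl\{\varepsilon^{12}_{\DR,\alpha_1,\alpha_2}\langle\phi_{1;a},\rho_{1;a}\rangle + \varepsilon^{13}_{\DR,\alpha_1,\alpha_2}\,\tau_1\langle\phi_1,\rho_1\rangle + \varepsilon^{6}_{\DR,\alpha_1,\alpha_2}\langle E_1\phi_1,\rho_1\rangle\bigr\}\,dx_1.
\]
On the other hand, (2) with $j=2$ gives $\beta_2^{\partial M} = \beta_1^{M_1}\cdot\beta_0^{\partial M_2}$, since $\beta_2^{\partial M_2}$ vanishes on the flat half-line with trivial data; here $\beta_0^{\partial M_2} = \varepsilon_{\DR,\alpha_1,\alpha_2}$ and, after integration by parts on the closed $M_1$, $\beta_1^{M_1} = \int_{M_1}\{-\langle\phi_{1;a},\rho_{1;a}\rangle + \langle E_1\phi_1,\rho_1\rangle\}\,dx_1$. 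Matching the three invariants, which can be separated by independently varying $E_1$, the curvature of $M_1$, and $(\phi_1,\rho_1)$, yields $\varepsilon^{12} = -\varepsilon$, $\varepsilon^{13} = 0$, and $\varepsilon^{6} = \varepsilon$. The main obstacle will be the sign bookkeeping in this last step: reconciling the convention $D_1 = -(\Delta_1 + E_1)$ of Lemma~\ref{lem-1.1} with the integration-by-parts identity $\int_{M_1}\langle\Delta_1\phi_1,\rho_1\rangle = -\int_{M_1}\langle\phi_{1;a},\rho_{1;a}\rangle$, and verifying that the three surviving Weyl invariants admit enough independent variation to isolate each coefficient, since the cancellation $\varepsilon^{6} = \varepsilon$ is extremely sensitive to every sign aligning correctly.
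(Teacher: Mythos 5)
Your argument is correct and follows the paper's own proof essentially step for step: tensor factorization of the heat semigroup verified against the defining relations for (1), matching the fractional powers of $t$ in the two expansions for (2), the flat circle factor with $\phi_1=\rho_1=1$ for (3), and a flat one-dimensional factor paired with an arbitrary closed $(M_1,D_1)$ plus integration by parts (using $D_1=-(\Delta+E_1)$) for (4). The only caveat is your aside that bilinearity extends (1) to general $\phi,\rho$: the product identity (1) holds only for $\phi=\phi_1\otimes\phi_2$ and $\rho=\rho_1\otimes\rho_2$ and is not bilinear-extendable, but since the lemma is stated for product data this remark is superfluous and does not affect the proof.
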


\begin{proof} One has purely formally that
\begin{equation}\label{eqn-4.a}
e^{-tD_{\BB_\DR}}\phi=e^{-tD_1}\phi_1\otimes e^{-tD_{2,\BB_\DR}}\phi_2\,.
\end{equation}
We verify Equation~(\ref{eqn-4.a}) using Equation~(\ref{eqn-1.c}) as follows. Let $T_i$
be the temperature distributions defined by $\phi_i$ on $M_i$. We compute:
\begin{eqnarray*}
&&(\partial_t+D)(T_1\otimes T_2)=\{(\partial_t+D_1)T_1\}\otimes T_2
+T_1\otimes\{(\partial_t+D_2)T_2\}=0,\\
&&\lim_{t\downarrow0}\{T_1\otimes T_2)(\cdot;t)=
\lim_{t\downarrow0}\{T_1(\cdot;t)\}\otimes\lim_{t\downarrow0}\{T_2(\cdot;t)\}
=\phi_1\otimes\phi_2,\\
&&\BB_\DR(\phi_1\otimes\phi_2)=\phi_1\otimes\{\BB_\DR\phi_2\}=0\,.
\end{eqnarray*}
Assertion~(1) is now immediate. Since the boundary conditions play no role on
the closed manifold $M_1$, we have
$$\beta(\phi_1,\rho_1,D_1)(t)\sim\sum_{n=0}^\infty t^n\frac{(-1)^n}{n!}
\int_{M_1}\langle D^n\phi_1,\rho_1\rangle dx_1\,.$$
Assertion~(2) now follows from Assertion~(1) by equating terms in the asymptotic
expansions.

We argue as follows to prove Assertion~(3). Let $M_1=S^1$, let
$D_1=-\partial_\theta^2$, and let
 $\phi_1=\rho_1=1$. Since $T_1(x_1;t)=1$,
$$\beta(\phi_1,\rho_1,D_1)(t)=2\pi\,.$$
We use Assertion~(2) to see that:
$$\beta_{j,\alpha_1,\alpha_2}^{\partial M}(\phi,\rho,D,\BB_\DR)
=2\pi\cdot
\beta_{j,\alpha_1,\alpha_2}^{\partial M_2}(\phi_2,\rho_2,D_2,\BB_\DR)\,.$$
This implies that for the local formulas it does not matter if we compute on $M$
or on $M_2$. Consequently the coefficients of Lemma~\ref{lem-4.2}
are independent of $m$.

We complete the proof by considering Assertion~(4). With Robin boundary conditions, we take $\SR=0$
as it plays no role in the evaluation of the coefficients we are considering. Let $(M_1,g_1,\phi_1,\rho_1,D_1)$ be arbitrary.
Let $M_2=[0,\pi]$ with the usual flat metric, let $D_2=\partial_r^2$, and let
$\phi_{\alpha_1}$ and $\rho_{\alpha_2}$ be as given in Equation~(\ref{eqn-3.a}).
Since the structures on $M_2$ are flat,
$$\beta_{j,\alpha_1,\alpha_2}^{\partial M_2}(\phi_2,\rho_2,D_2,\BB_\DR)=\left\{
\begin{array}{ll}
\varepsilon_{\DR,\alpha_1,\alpha_2}&\text{ if }j=0\\
0&\text{ if }j>0
\end{array}\right\}\,.$$
Consequently Assertion~(2) yields:
\begin{eqnarray}
&&\beta_{2,\alpha_1,\alpha_2}^{\partial M}(\phi,\rho,D,\BB_\DR)
=-\varepsilon_{\DR,\alpha_1,\alpha_2}
\int_{M_1}\langle D_1\phi_1,\rho_1\rangle dx_1\nonumber\\
&=&
\varepsilon_{\mathcal{R},\alpha_1,\alpha_2}
\int_{M_1}\langle\phi_{1;aa}+E\phi_1+0\cdot\tau_1\phi_1,\rho_1\rangle dx_1
\label{eqn-4.b}\\
&=&\varepsilon_{\DR,\alpha_1,\alpha_2}
\int_{M_1}\left\{\vphantom{\int}-\langle\phi_{1;a},\rho_{1;a}\rangle
+\langle E\phi_1,\rho_1\rangle+\langle 0\tau\phi_1,\rho_1\rangle
\right\}dx_1\,.\nonumber
\end{eqnarray}
On the other hand, when we use the formulas of Lemma~\ref{lem-1.4} directly,
most of the terms vanish and we obtain:
\begin{eqnarray}
&&\beta_{2,\alpha_1,\alpha_2}^{\partial M}(\phi,\rho,D,\BB_\RR)\label{eqn-4.c}\\
&=&\displaystyle\int_{M_1}
\left\{\varepsilon_{\DR,\alpha_1,\alpha_2}^{12}\langle\phi_{1;a},\rho_{1;a}\rangle
+\langle\varepsilon_{\DR,\alpha_1,\alpha_2}^6E\phi_1
+\varepsilon_{\DR,\alpha_1,\alpha_2}^{13}\tau\phi_1,\rho_1\rangle\right\}dx_1\,.
\nonumber
\end{eqnarray}
The desired  result follows by comparing Equation~(\ref{eqn-4.b}) with
Equation~(\ref{eqn-4.c}).
\end{proof}

\subsection{Index shifting}
The following observation is simply change of notation:
\begin{lemma}\label{lem-4.3}
\ \begin{enumerate}
\item Suppose that $\phi^0=0$.
Then $\phi$ can also be regarded as being an element
$\tilde\phi\in\mathcal{K}_{\alpha_1-1}(V)$ where
$\tilde\phi^i=\phi^{i+1}$. We have:
$$\beta_{j-1,\alpha_1-1,\alpha_2}^{\partial M}(\tilde\phi,\rho,D,\BB_\DR)=
    \beta_{j,\alpha_1,\alpha_2}^{\partial M}(\phi,\rho,D,\BB_\DR)\,.$$
\item Suppose that $\rho^0=0$.
Then $\rho$ can also be regarded as being an element
$\tilde\rho\in\mathcal{K}_{\alpha_2-1}(V^*)$ where
$\tilde\rho^i=\rho^{i+1}$. We have:
$$\beta_{j-1,\alpha_1,\alpha_2-1}^{\partial M}(\phi,\tilde\rho,D,\BB_\DR)=
    \beta_{j,\alpha_1,\alpha_2}^{\partial M}(\phi,\rho,D,\BB_\DR)\,.$$
\item We have
$$\begin{array}{lll}
\varepsilon_{\DR,\alpha_1,\alpha_2}^1
=\varepsilon_{\DR,\alpha_1-1,\alpha_2},&
\varepsilon_{\DR,\alpha_1,\alpha_2}^3
=\varepsilon_{\DR,\alpha_1,\alpha_2-1},\
\vphantom{\vrule height 12pt}\\
\varepsilon_{\DR,\alpha_1,\alpha_2}^4
=\varepsilon_{\DR,\alpha_1-2,\alpha_2},&
\varepsilon_{\DR,\alpha_1,\alpha_2}^7
=\varepsilon_{\DR,\alpha_1,\alpha_2-2},
\vphantom{\vrule height 12pt}\\
 \varepsilon_{\DR,\alpha_1,\alpha_2}^5
 =\varepsilon_{\DR,\alpha_1-1,\alpha_2}^2,&
  \varepsilon_{\DR,\alpha_1,\alpha_2}^8
 =\varepsilon_{\DR,\alpha_1,\alpha_2-1}^2,\vphantom{\vrule height 12pt}\\
\varepsilon_{\DR,\alpha_1,\alpha_2}^{14}
=\varepsilon_{\DR,\alpha_1-1,\alpha_2-1},&
\varepsilon_{\RR,\alpha_1,\alpha_2}^{17}=
   \varepsilon_{\RR,\alpha_1-1,\alpha_2}^{15},
\vphantom{\vrule height 12pt}\\
\varepsilon_{\RR,\alpha_1,\alpha_2}^{18}
   =\varepsilon_{\RR,\alpha_1,\alpha_2-1}^{15}.
   \vphantom{\vrule height 12pt}
\end{array}$$
\end{enumerate}\end{lemma}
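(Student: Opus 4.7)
The plan is to exploit the fact that, under the hypothesis $\phi^0=0$ (resp.\ $\rho^0=0$), the function $\phi$ (resp.\ $\rho$) is literally unchanged as a section on $M$ when we reinterpret its singularity order, so the heat content $\beta(\phi,\rho,D,\BB_\DR)(t)$ itself is unchanged. Only the description of its asymptotic expansion changes. Assertions (1) and (2) then follow from the uniqueness of the expansion in Conjecture~\ref{conj-1.2} by matching powers of $t$; assertion (3) follows by substituting $\phi^0=0$ (or $\rho^0=0$) into Lemma~\ref{lem-1.4} applied to both descriptions and equating coefficients of each independent boundary invariant.

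For (1), if $\phi^0=0$ the expansion of Equation~(\ref{eqn-1.b}) reads
$$\phi(y,r)\sim r^{-\alpha_1}(r\phi^1+r^2\phi^2+\cdots)=r^{-(\alpha_1-1)}(\phi^1+r\phi^2+\cdots),$$
so $\phi\in\mathcal{K}_{\alpha_1-1}(V)$ with new Taylor coefficients $\tilde\phi^i=\phi^{i+1}$. Conjecture~\ref{conj-1.2} applied in the two different parameterizations gives one boundary contribution at $t^{(1+j-\alpha_1-\alpha_2)/2}$ with coefficient $\beta_{j,\alpha_1,\alpha_2}^{\partial M}(\phi,\rho,D,\BB_\DR)$, and another at $t^{(1+(j-1)-(\alpha_1-1)-\alpha_2)/2}$ with coefficient $\beta_{j-1,\alpha_1-1,\alpha_2}^{\partial M}(\tilde\phi,\rho,D,\BB_\DR)$; the two exponents are identical, and the uniqueness of asymptotic coefficients yields (1). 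Assertion (2) follows by the same reasoning with the roles of $\phi$ and $\rho$ exchanged.

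For assertion (3), I specialise to $\phi^0=0$ (or $\rho^0=0$) and read off Lemma~\ref{lem-1.4}. For example, with $\phi^0=0$, every term in $\beta_{1,\alpha_1,\alpha_2}^{\partial M}(\phi,\rho,D,\BB_\DR)$ except the one involving $\phi^1$ vanishes, leaving $\int_{\partial M}\varepsilon_{\DR,\alpha_1,\alpha_2}^1\langle\phi^1,\rho^0\rangle\,dy$; on the other hand (1) gives $\beta_{0,\alpha_1-1,\alpha_2}^{\partial M}(\tilde\phi,\rho,D,\BB_\DR)=\int_{\partial M}\varepsilon_{\DR,\alpha_1-1,\alpha_2}\langle\phi^1,\rho^0\rangle\,dy$, so $\varepsilon_{\DR,\alpha_1,\alpha_2}^1=\varepsilon_{\DR,\alpha_1-1,\alpha_2}$. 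The identities for $\varepsilon^3,\varepsilon^4,\varepsilon^7,\varepsilon^{14}$ come from analogous $j=1$ and $j=2$ matches under $\phi^0=0$ or $\rho^0=0$ (for $\varepsilon^{14}$ one imposes both). The identities $\varepsilon_{\DR,\alpha_1,\alpha_2}^5=\varepsilon_{\DR,\alpha_1-1,\alpha_2}^2$ and $\varepsilon_{\DR,\alpha_1,\alpha_2}^8=\varepsilon_{\DR,\alpha_1,\alpha_2-1}^2$ arise from matching the $\langle L_{aa}\phi^1,\rho^0\rangle$ and $\langle L_{aa}\phi^0,\rho^1\rangle$ terms against the shifted $\varepsilon^2$ coefficient; since $\SR$ is independent of $\phi$ and $\rho$, the same argument applied to the $\langle\SR\phi^1,\rho^0\rangle$ and $\langle\SR\phi^0,\rho^1\rangle$ terms yields the remaining Robin identities for $\varepsilon^{17}$ and $\varepsilon^{18}$.

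There is no substantive obstacle, as the entire argument is a bookkeeping exercise. The only technical point that must be invoked is the linear independence of the Weyl invariants listed in Lemma~\ref{lem-1.4} as $(\phi^\ell,\rho^\ell)$ and the background geometry vary — without this one could not read off individual coefficient identities from the integrated relations. This independence was already established in Section~\ref{sect-2} as part of the proof of Lemma~\ref{lem-1.4}.
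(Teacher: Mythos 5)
Your proposal is correct and follows essentially the same route as the paper: re-interpret $\phi$ (resp.\ $\rho$) with $\phi^0=0$ as an element of $\mathcal{K}_{\alpha_1-1}(V)$, use the fact that the heat content itself is unchanged together with the shift of the exponent $(1+j-\alpha_1-\alpha_2)/2$ to equate boundary coefficients with $j\mapsto j-1$, and then specialize Lemma~\ref{lem-1.4} to read off the coefficient identities. The only cosmetic difference is your explicit appeal to independence of the Weyl invariants (which the paper leaves implicit in treating the $\varepsilon^\nu_{\DR,\alpha_1,\alpha_2}$ as well-defined universal constants), so the argument matches the paper's in substance.
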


\begin{proof} Suppose $\phi^0=0$. Then
$$\phi\sim r^{-\alpha_1}(r\phi^1+r^2\phi^2+...)\sim r^{-(\alpha_1-1)}(\phi^1+r\phi^2+...).\,.$$
 Consequently, $\phi\in\mathcal{K}_{\alpha_1-1}$. We expand:
\begin{eqnarray*}
&&\beta(\phi,\rho,D,\BB_\DR)(t)\\
&\sim&
\sum_{n=0}^\infty t^n\beta_n^M(\phi,\rho,D)
   +\sum_{j=0}^\infty t^{(1+j-\alpha_1-\alpha_2)/2}
   \beta_{j,\alpha_1,\alpha_2}^{\partial
   M}(\phi,\rho,D,\BB_\DR)\\
&=&\beta(\tilde\phi,\rho,D,\BB_\DR)(t)\\
   &\sim&
\sum_{n=0}^\infty t^n\beta_n^M(\tilde\phi,\rho,D)
   +\sum_{k=0}^\infty t^{(1+k-(\alpha_1-1)-\alpha_2)/2}
   \beta_{k,\alpha_1-1,\alpha_2}^{\partial
   M}(\tilde\phi,\rho,D,\BB_\DR)\,.
\end{eqnarray*}
We set $k+1=j$ and equate terms in the boundary asymptotic expansions to prove Assertion~(1).
The proof of Assertion~(2) is similar and is therefore omitted.

We apply Assertion~(1) to prove Assertion~(3). Suppose $\phi^0=0$.
By Lemma~\ref{lem-1.4}:
\begin{eqnarray*}
&&\beta_{1,\alpha_1,\alpha_2}^{\partial M}(\phi,\rho,D,\BB_\DR)=
\int_{\partial M}\varepsilon_{\DR,\alpha_1,\alpha_2}^1\langle\phi^1,\rho^0\rangle
dy,\\
&&\beta_{0,\alpha_1-1,\alpha_2}^{\partial M}(\tilde\phi,\rho,D,\BB_\DR)=
\int_{\partial M}\varepsilon_{\DR,\alpha_1-1,\alpha_2}
\langle\tilde\phi^0,\rho^0\rangle dy\,.
\end{eqnarray*}
Consequently,
$\varepsilon_{\DR,\alpha_1,\alpha_2}^1
=\varepsilon_{\DR,\alpha_1-1,\alpha_2}$. The remaining assertions of the Lemma
are established similarly.
\end{proof}

\subsection{Relating Robin and Dirichlet Boundary Conditions}
We work on the interval $[0,\pi]$. Let $0\ne c\in\mathbb{R}$.
Set
$$
A:=\partial_x+c,\quad A^*:=-\partial_x+c,\quad
D=A^*A=AA^*=-(\partial_x^2-c^2)\,.
$$
Let $\BB_\DD$ be the Dirichlet boundary operator
and let $\BB_\RR\phi=0$ define the boundary condition
$A^*\phi|_{\partial M}=0$;
this is the associated Robin boundary operator:
$$\begin{array}{ll}
\BB_\DD\phi:=\phi(0)\oplus\phi(\pi),&
\BB_\RR\phi:=(\phi^\prime(0)-c\phi(0))\oplus
(-\phi^\prime(\pi)+c\phi(\pi)),\\
\SR(0)=-c,&\SR(\pi)=+c,\qquad\qquad E=-c^2\,.\vphantom{\vrule height 11pt}
\end{array}$$

\begin{lemma}\label{lem-4.4}
If $\Re(\alpha_1)<-1$ and $\Re(\alpha_2)<-1$, then:
\begin{enumerate}
\medbreak\item $\displaystyle\partial_t\beta(\phi,\rho,D,\BB_{\DR})(t)=
-\beta(D\phi,\rho,D,\BB_{\DR})(t)$.
\medbreak\item  $\partial_t\beta(\phi,\rho,D,\BB_{\DR})(t)=
-\beta(\phi,D\rho,D,\BB_{\DR})(t)$.
\medbreak\item $\displaystyle\partial_t
\beta(\phi,\rho,D,\BB_\RR)(t)=
-\beta(A^\star\phi,A^\star\rho,D,\BB_\DD)(t)$.
\medbreak\item $\displaystyle\partial_t
\beta(\phi,\rho,D,\BB_\DD)(t)=
-\beta(A\phi,A\rho,D,\BB_\RR)(t)$.
\medbreak\item $\beta^{\partial M}_{j,\alpha_1+2,\alpha_2}(D\phi,\rho,D,\BB_\DR)
=-\frac12(1+j-\alpha_1-\alpha_2)\beta_{j,\alpha_1,\alpha_2}^{\partial M}(\phi,\rho,D,
\BB_\DR)$.
\medbreak\item $\beta _{j,\alpha_1,\alpha_2+2}^{\partial M}(\phi,D\rho,D,\BB_\DR)
=-\frac12(1+j-\alpha_1-\alpha_2)\beta_{j,\alpha_1,\alpha_2}^{\partial M}
(\phi,\rho,D,\BB_\DR)$.
\medbreak\item $\beta_{j,\alpha_1+1,\alpha_2+1}^{\partial M}
(A^*\phi,A^*\rho,D,\BB_\DD)
=-\frac12(1+j-\alpha_1-\alpha_2)\beta_{j,\alpha_1,\alpha_2}^{\partial M}
(\phi,\rho,D,\BB_\RR)$.
\medbreak\item $\beta^{\partial M}_{j,\alpha_1+1,\alpha_2+1}
(A\phi,A\rho,D,\BB_\RR)
=-\frac12(1+j-\alpha_1-\alpha_2)\beta_{j,\alpha_1,\alpha_2}^{\partial M}
(\phi,\rho,D,\BB_\DD)$.
\medbreak\item $\varepsilon_{\RR,\alpha_1,\alpha_2}^{15}=\Xfifteen$.
\medbreak\item $\varepsilon_{\RR,\alpha_1,\alpha_2}^{17}=\Xseventeen$.
\medbreak\item $\varepsilon_{\RR,\alpha_1,\alpha_2}^{18}=\Xeighteen$.
\medbreak\item $
\varepsilon_{\RR,\alpha_1,\alpha_2}^{16}=\Xsixteen$
\end{enumerate}
\end{lemma}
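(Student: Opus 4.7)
I would split the proof into three stages. Stage one establishes the four integral identities (1)--(4) by differentiating the heat content and moving the operator across the pairing. Stage two produces the boundary asymptotic identities (5)--(8) from (1)--(4) by matching powers of $t$ via the expansion supplied by Conjecture~\ref{conj-1.2}. Stage three uses (7) together with Theorem~1.8 on $[0,\pi]$ to identify the four Robin-specific universal coefficients (9)--(12).

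\textbf{Integral identities.} For (1) and (2), the hypothesis $\Re(\alpha_i)<-1$ forces $\phi$, $\rho$ and their first derivatives to vanish at $\partial M$, so $\partial_t\beta=-\int\langle DT,\rho\rangle\,dx$ equals both $-\beta(D\phi,\rho,D,\BB)(t)$ (the semigroup commutes with $D$ on its extended domain) and $-\beta(\phi,D\rho,D,\BB)(t)$ (after two integrations by parts whose boundary terms vanish). For (3), the key observation is that $A^*$ intertwines the Robin and Dirichlet semigroups: if $T=e^{-tD_{\BB_\RR}}\phi$ then $A^*T$ satisfies the heat equation for $D=A^*A$ with Dirichlet data $A^*T|_{\partial M}=0$, so $A^*T=e^{-tD_{\BB_\DD}}(A^*\phi)$. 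Integration by parts of $\int\langle AA^*T,\rho\rangle\,dx$ then produces $\int\langle A^*T,A^*\rho\rangle\,dx$, the boundary contribution $[(A^*T)\rho]_0^\pi$ vanishing by the Robin condition. Assertion (4) is symmetric, using $D=A^*A$ with Dirichlet data for $\phi$.

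\textbf{Asymptotic matching and coefficient extraction.} For (5)--(8), since $D\phi\in\mathcal{K}_{\alpha_1+2}(V)$ and $A^*\phi\in\mathcal{K}_{\alpha_1+1}(V)$ (with analogous shifts for $\rho$), the powers on the right of (1)--(4) match those of $\partial_t t^{(1+j-\alpha_1-\alpha_2)/2}=\tfrac12(1+j-\alpha_1-\alpha_2)t^{(j-1-\alpha_1-\alpha_2)/2}$ on the left; equating boundary coefficients yields the claimed prefactors. For (9)--(12) I work on $M=[0,\pi]$, where $L_{ab}=R_{ijkl}=\tau=0$, $E=-c^2$, and $\SR=\mp c$ at the two endpoints. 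A direct Taylor computation gives $(A^*\phi)^0=\alpha_1\phi^0$, $(A^*\phi)^1=(\alpha_1-1)\phi^1+c\phi^0$, $(A^*\phi)^2=(\alpha_1-2)\phi^2+c\phi^1$, and similarly for $\rho$. I apply (7) with $j=1$ for (9) and with $j=2$ for (10)--(12), expand the left side by the Dirichlet formula (Theorem~1.8) with indices shifted to $(\alpha_1+1,\alpha_2+1)$, match coefficients of the independent monomials $\SR\phi^0\rho^0$, $\SR\phi^1\rho^0$, $\SR\phi^0\rho^1$, and $\SR^2\phi^0\rho^0$, and convert each Dirichlet coefficient back into a Robin coefficient via Lemma~\ref{lem-1.7}.

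\textbf{Main obstacle.} The trickiest case is (12). The monomial $\SR^2\phi^0\rho^0$ on the Robin side receives contributions both from $\varepsilon_{\RR,\alpha_1,\alpha_2}^{16}\langle\SR^2\phi^0,\rho^0\rangle$ and from $\varepsilon_{\RR,\alpha_1,\alpha_2}\langle E\phi^0,\rho^0\rangle$ with $E=-\SR^2$; on the Dirichlet side, the matching $c^2\phi^0\rho^0$ monomial receives contributions from $\varepsilon_{\DD,\alpha_1+1,\alpha_2+1}\langle E(A^*\phi)^0,(A^*\rho)^0\rangle$ and from the $c\phi^0\cdot c\rho^0$ cross term inside $\varepsilon_{\DD,\alpha_1,\alpha_2}\langle(A^*\phi)^1,(A^*\rho)^1\rangle$. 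Collecting these four contributions and using the identity $\alpha_1\alpha_2\varepsilon_{\DD,\alpha_1+1,\alpha_2+1}=\tfrac12(\alpha_1+\alpha_2-1)\varepsilon_{\RR,\alpha_1,\alpha_2}$, obtained by applying (3) of Lemma~\ref{lem-1.7} at the shifted indices $(\alpha_1+1,\alpha_2+1)$, the claimed identity follows after the algebraic simplification $\tfrac{\alpha_1+\alpha_2-1}{3-\alpha_1-\alpha_2}+1=\tfrac{2}{3-\alpha_1-\alpha_2}$. Overlooking the $E$ contribution on the Robin side would produce the wrong prefactor, so this bookkeeping is the essential hurdle.
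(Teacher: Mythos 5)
Your proposal is correct and follows essentially the same route as the paper: exploit the factorization $D=A^*A=AA^*$ on $[0,\pi]$ to intertwine the Robin and Dirichlet problems for (1)--(4), equate the two asymptotic expansions in $t$ (the boundary powers being disjoint from the interior integer powers since $\alpha_1+\alpha_2\notin\mathbb{Z}$) to get (5)--(8), and then evaluate on the singular test data with flat geometry, converting the known Dirichlet coefficients into Robin ones via Lemma~\ref{lem-1.7} to obtain (9)--(12), including the crucial bookkeeping of the $E=-c^2$ contributions on both sides for (12). The only deviations are presentational and harmless: the paper establishes (1)--(4) through explicit spectral resolutions ($\phi_n^{\RR}=\lambda_n^{-1/2}A\phi_n^{\DD}$) rather than your semigroup-level intertwining and integration by parts, and it derives (10)--(11) from (9) by the index shifts of Lemma~\ref{lem-4.3} instead of your direct matching of the $\SR\phi^1\rho^0$ and $\SR\phi^0\rho^1$ monomials in the $j=2$ identity (which does reproduce the stated formulas, provided one uses test data with $\phi^1,\rho^1\ne0$ as your general Taylor formulas for $(A^*\phi)^\ell$ indicate).
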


\begin{proof}
We begin by establishing the spectral resolutions of the Dirichlet realization $D_\DD$ and of the Robin realization $D_\RR$.
For $n=1,2,...$, set:
$$
\phi^\DD_n:=\textstyle\sqrt\frac2\pi\sin(nx)\text{ and }\lambda_n:=n^2+c^2\,.
$$
Because $\{\phi_n^\DD\}$ is a complete
 orthonormal basis for $L^2$, because
$D\phi_n^\DD=\lambda_n\phi_n^\DD$, and because
$\phi_n^\DD(0)=\phi_n^\DD(\pi)=0$, we may conclude that
$\{\phi_n^\DD,\lambda_n\}_{n\in\mathbb{N}}$
is the Dirichlet spectral resolution of $D$. Similarly, set:
$$
\phi^\RR_n:=\lambda_n^{-\frac12}A\phi_n^\DD\,;
$$
$\{\phi^{\RR}_n\}$ is a complete
orthonormal basis for $L^2$ with
$D\phi_n^\RR=\lambda_n\phi_n^\RR$. Since
$$
A^*\phi_n^\RR|_{\partial M}
=\lambda_n^{-1/2}A^*A\phi_n^\DD|_{\partial M}
=\lambda_n^{-1/2}D\phi_n^\DD|_{\partial M}
=\lambda_n^{1/2}\phi_n^\DD|_{\partial M}
=0\,,
$$
$\BB_\RR A\phi_n^\DD=0$ so the eigenfunctions $\phi_n^\RR$
satisfy Robin boundary conditions. Consequently
$\{\phi_n^\RR,\lambda_n\}_{n\in\mathbb{N}}$
is a Robin spectral resolution of $D$.
If $\phi\in L^1$, let
$$\gamma_n^\DR(\phi):=\int_0^\pi\phi(x)\phi_n^\DR(x)dx$$
be the associated Fourier coefficients. We then have
\begin{eqnarray*}
&&\beta(\phi,\rho,D,\BB_{\DR})(t)
    =\sum_{n=1}^\infty e^{-t\lambda_n}\gamma_n^{\DR}(\phi)
    \gamma_n^{\DR}(\rho),\\
&&\partial_t\beta(\phi,\rho,D,\BB_{\DR})(t)
    =\sum_{n=1}^\infty -\lambda_ne^{-t\lambda_n}
    \gamma_n^{\DR}(\phi)\gamma_n^{\DR}(\rho)\,.
\end{eqnarray*}

Since $\Re(\alpha_1)<-1$ and $\Re(\alpha_2)<-1$,
we may integrate by parts to see
\begin{eqnarray*}
\gamma_n^{\DR}(D\phi)&=&
\int_MD\phi\cdot\phi_n^{\DR}dx=
\int_M\phi\cdot D\phi_n^{\DR}dx\\
&=&\lambda_n\int_M\phi\cdot \phi_n^{\DR}dx
=\lambda_n\gamma_n^{\DR}(\phi)\,.
\end{eqnarray*}
Assertions (1) and (2) now follow. To prove Assertion (3), we note:
\begin{eqnarray*}
&&\gamma_n^\DD(A^\star\phi)
=\int_MA^\star\phi\cdot\phi_n^\DD dx
=\int_M\phi\cdot A\phi_n^\DD dx\\
&&\qquad\qquad\phantom{}=\sqrt{\lambda_n}\int_M\phi\cdot\phi_n^\RR dx
=\sqrt{\lambda_n}\gamma_n^\RR(\phi)\,.
\end{eqnarray*}
Thus we may establish Assertion~(3) by computing:
\begin{eqnarray*}
&&\displaystyle\partial_t\beta(\phi,\rho,D,\BB_\RR)(t)=
\sum_{n=1}^\infty-\lambda_ne^{-t\lambda_n}\gamma_n^\RR(\phi)\cdot
\gamma_n^\RR(\rho)\\
&&\quad=-\sum_{n=1}^\infty e^{-t\lambda_n}\gamma_n^\DD(A^*\phi) \cdot
\gamma_n^\DD(A^*\rho)
=-\beta(A^*\phi,A^*\rho,D,\BB_\DD)(t)\,.
\end{eqnarray*}
Assertion~(4) follows similarly once we observe that
$\{A^*\phi_n^\RR/\sqrt{\lambda_n},\lambda_n\}_{n\in\mathbb{Z}}$ is
dually a spectral resolution of the Dirichlet Laplacian; the roles of $A$ and $A^*$
being interchanged.

We now establish Assertions~(5)-(8). Since
$D\phi\in\mathcal{K}_{\alpha_1+2}(V)$, we have:
\begin{eqnarray*}
&&\partial_t\beta(\phi,\rho,D,\BB_\DR)(t)\sim
\sum_{n=0}^\infty n\cdot(-1)^nt^{n-1}/n!
\mathcal{I}_{Reg}^g\{\langle D^n\phi,\rho\rangle\}\\
&&\qquad+\frac12\sum_{j=0}^\infty (j+1-\alpha_1-\alpha_2)
t^{(j-1-\alpha_1-\alpha_2)/2}\beta_{j,\alpha_1,\alpha_2}^{\partial
   M}(\phi,\rho,D,\BB_\DR)\\
&=&-\beta(D\phi,\rho,D,\BB_\DR)(t)\sim
-\sum_{\ell=0}^\infty(-1)^\ell t^\ell/\ell!\mathcal{I}_{\operatorname{Reg}}
\{\langle D^{\ell+1}\phi,\rho\rangle\}\\
&&\qquad-\sum_{k=0}^\infty
t^{(k+1-(\alpha_1+2)-\alpha_2)/2}\beta_{k,\alpha_1+2,\alpha_2}^{\partial
   M}(D\phi,\rho,D,\BB_\DR)\,.
\end{eqnarray*}
The term with $n=0$ plays no role. Setting $n=\ell+1$ equates the summation
involving the regularized integrals. Setting $k=j$ and equating terms in the asymptotic series for the boundary terms establishes
Assertion~(5); the proof of Assertion~(6) is similar. To prove Assertions (7) and (8),
we may integrate by parts to express
\begin{eqnarray*}
&&\mathcal{I}_{\operatorname{Reg}}\{\langle D^nA^*\phi,A^*\rho\rangle\}
=\mathcal{I}_{\operatorname{Reg}}\{\langle AD^nA^*\phi,\rho\rangle\}
=\mathcal{I}_{\operatorname{Reg}}\{\langle D^nAA^*\phi,\rho\rangle\}\\
&=&\mathcal{I}_{\operatorname{Reg}}\{\langle D^{n+1}\phi,\rho\rangle\},\\
&&\mathcal{I}_{\operatorname{Reg}}\{\langle D^nA\phi,A\rho\rangle\}
=\mathcal{I}_{\operatorname{Reg}}\{\langle A^*D^nA\phi,\rho\rangle\}
=\mathcal{I}_{\operatorname{Reg}}\{\langle D^nA^*A\phi,\rho\rangle\}\\
&=&\mathcal{I}_{\operatorname{Reg}}\{\langle D^{n+1}\phi,\rho\rangle\}\,.
\end{eqnarray*}
The remainder of the argument now follows similarly.

To prove Assertion~(9), we assume $\Re(\alpha_1)<-1$ and $\Re(\alpha_2)<-1$.
We then use analytic continuation to obtain the result on the full parameter range.
Adopt the notation of Equation~(\ref{eqn-3.a}) and set
$\phi=\phi_{\alpha_1}$, and $\rho=\rho_{\alpha_2}$. Only the value at $r=0$
is relevant; integration over the boundary is just evaluation at $0$.
 Since $D=-(\partial_x^2-c^2)$, the metric and associated connection are flat. We have:
\begin{eqnarray*}
&&\ A^*\phi=(-\partial_x+c)(r^{-\alpha_1})=r^{-\alpha_1-1}\{\alpha_1+cr\},\\
&&\ A^*\rho=(-\partial_x+c)(r^{-\alpha_2})=r^{-\alpha_2-1}\{\alpha_2+cr\},\\
&&\begin{array}{llll}
\phi^0=1,&\phi^1=0,&\rho^0=1,&\rho^1=0,\\
(A^*\phi)^0=\alpha_1,&(A^*\phi)^1=c,
&(A^*\rho)^0=\alpha_2,&(A^*\rho)^1=c,\\
\phi^2=0,&\rho^2=0,&(A^*\rho)^2=0,&(A^*\phi)^2=0,\\
 \SR(0)=-c,&E=-c^2\,.
\end{array}\end{eqnarray*}
 We use Assertion~(7),  Lemma~\ref{lem-1.4}, and Lemma~\ref{lem-4.3}~(3) to see:
\begin{eqnarray*}
&&\beta^{\partial M}_{1,\alpha_1+1,\alpha_2+1}
(A^*\phi,A^*\rho,D,\mathcal{B}_\DD)\\
&=&
\varepsilon_{\mathcal{D},\alpha_1+1,\alpha_2+1}^1(A^*\phi)^1\cdot(A^*\rho)^0
+\varepsilon_{\mathcal{D},\alpha_1+1,\alpha_2+1}^3(A^*\phi)^0\cdot(A^*\rho)^1\\
&=&c\cdot
\left\{\alpha_2\varepsilon_{\mathcal{D},\alpha_1,\alpha_2+1}
+\alpha_1\varepsilon_{\mathcal{D},\alpha_1+1,\alpha_2}\right\}\\
&=&-\textstyle\frac12(2-\alpha_1-\alpha_2)\beta^{\partial M}_{1,\alpha_1,\alpha_2} 
(\phi,\rho,D,\mathcal{B}_\RR)\\
&=&\textstyle\frac12(2-\alpha_1-\alpha_2)\cdot
c\varepsilon_{\RR,\alpha_1,\alpha_2}^{15}\,.
\end{eqnarray*}
We solve these equations to establish see:
$$\textstyle\varepsilon_{\RR,\alpha_1,\alpha_2}^{15}
=\textstyle\frac2{2-\alpha_1-\alpha_2}(\alpha_2\varepsilon_{\mathcal{D},\alpha_1,\alpha_2+1}
     +\alpha_1\varepsilon_{\mathcal{D},\alpha_1+1,\alpha_2})\,.$$
Applying Lemma~\ref{lem-1.7}~(3) then yields:
$$\textstyle\varepsilon_{\RR,\alpha_1,\alpha_2}^{15}=\Xfifteen$$
Assertion~(10) and Assertion~(11) follow from Assertion~(9) by
using Lemma~\ref{lem-4.3} to index shift. We take $j=2$ and examine the
coefficient of $c^2$ to complete the proof of Assertion~(12) by computing:
\medbreak\quad
$\beta^{\partial M}_{2,\alpha_1+1,\alpha_2+1}
(A^*\phi,A^*\rho,D,\mathcal{B}_\DD)$
\medbreak\qquad
$=(A^*\phi)^1(A^*\rho)^1\varepsilon_{\DD,\alpha_1+1,\alpha_2+1}^{14}
-c^2 (A^*\phi)^0(A^*\rho)^0\varepsilon_{\DD,\alpha_1+1,\alpha_2+1}^{6}$
\medbreak\qquad
$=c^2\cdot\varepsilon_{\mathcal{D},\alpha_1,\alpha_2}
-c^2\alpha_1\alpha_2\varepsilon_{\DD,\alpha_1+1,\alpha_2+1}$
\medbreak\qquad
$=-\textstyle\frac12(3-\alpha_1-\alpha_2)
\beta^{\partial M} _{2,\alpha_1,\alpha_2}(\phi,\rho,D,\mathcal{B}_\RR)$
\medbreak\qquad
$=-\textstyle\frac12(3-\alpha_1-\alpha_2)c^2\{\varepsilon_{\RR,\alpha_1,\alpha_2}^{16}
-\varepsilon_{\RR,\alpha_1,\alpha_2}\}$.
\medbreak\noindent We solve these equations and use Lemma~\ref{lem-1.7} to see:
\medbreak\qquad
$\varepsilon_{\RR,\alpha_1,\alpha_2}^{16}=\textstyle
-\frac2{3-\alpha_1-\alpha_2}\varepsilon_{\DD,\alpha_1,\alpha_2}+\frac{2\alpha_1\alpha_2}{3-\alpha_1-\alpha_2}\varepsilon_{\DD,\alpha_1+1,\alpha_2+1}
+\varepsilon_{\RR,\alpha_1,\alpha_2}$
\medbreak\qquad\qquad
$=\textstyle-\frac2{3-\alpha_1-\alpha_2}\cdot\frac{3-\alpha_1-\alpha_2}{-2(\alpha_1-1)(\alpha_2-1)}
\varepsilon_{\mathcal{R},\alpha_1-1,\alpha_2-1}$
\medbreak\qquad\qquad\quad
$+\textstyle\frac{2\alpha_1\alpha_2}{3-\alpha_1-\alpha_2}\frac{1-\alpha_1-\alpha_2}{-2\alpha_1\alpha_2}
\varepsilon_{\mathcal{R},\alpha_1,\alpha_2}
+\frac{3-\alpha_1-\alpha_2}{3-\alpha_1-\alpha_2}\varepsilon_{\RR,\alpha_1,\alpha_2}$
\medbreak\qquad\qquad$=\textstyle\Xsixteen$.\end{proof}

\subsection{The proof of Lemma~\ref{lem-1.7}}
Lemma~\ref{lem-4.3}~(3) focuses attention on the shifted invariants
$\varepsilon_{\BB,\alpha_1-2,\alpha_2}$,
$\varepsilon_{\BB,\alpha_1,\alpha_2-2}$, and
$\varepsilon_{\BB,\alpha_1-1,\alpha_2-1}$.
Lemma \ref{lem-1.7} expresses these invariants in terms of the fundamental
invariants $\varepsilon_{\BB,\alpha_1,\alpha_2}$. These relations follow from
the standard properties of the $\Gamma$ function. It is instructive, however, to
use Lemma~\ref{lem-4.4} to establish these properties as an independent check
on our work.

We adopt the notation of Lemma~\ref{lem-4.3} with $c=0$. We use Equation~(\ref{eqn-3.a}) to define
$\phi_{\alpha_1}$ and $\rho_{\alpha_2}$ on $[0,\pi]$.
We take $j=0$ and apply Lemma~\ref{lem-1.4} and Lemma~\ref{lem-4.4} to see:
\begin{eqnarray*}
&&\beta_{0,\tilde\alpha_1+2,\tilde\alpha_2}^{\partial M}
(D\phi_{\tilde\alpha_1},\rho_{\tilde\alpha_2},D,\BB_\DR)\\
&=&
-\beta_{0,\tilde\alpha_1+2,\tilde\alpha_2}^{\partial M}(\tilde\alpha_1(\tilde\alpha_1+1)\phi_{\tilde\alpha_1+  2},
\rho_{\tilde\alpha_2} ,D,\BB_\DR)
=-\tilde\alpha_1(\tilde\alpha_1+1)
\varepsilon_{\DR,\tilde\alpha_1+2,\tilde\alpha_2}\\
&=&-\textstyle\frac{1-\tilde\alpha_1-\tilde\alpha_2}2
\beta_{0,\tilde\alpha_1,\tilde\alpha_2}^{\partial M}
(\phi,\rho,D,\BB_\DR)
=-\textstyle\frac{1-\tilde\alpha_1-\tilde\alpha_2}2
\varepsilon_{\DR,\tilde\alpha_1,\tilde\alpha_2}\,.
\end{eqnarray*}
Consequently:
$$
-\tilde\alpha_1(\tilde\alpha_1+1)
\varepsilon_{\DR,\tilde\alpha_1+2,\tilde\alpha_2}
=-\textstyle\frac{1-\tilde\alpha_1-\tilde\alpha_2}2
\varepsilon_{\DR,\tilde\alpha_1,\tilde\alpha_2}\,.
$$
Lemma~\ref{lem-1.7}~(1) now follows after replacing setting $\tilde\alpha_1=\alpha_1-2$ and
$\tilde\alpha_2=\alpha_2$.
The proof of Lemma~\ref{lem-1.7}~(2) is similar.
To prove Lemma~\ref{lem-1.7}~(3), we set $c=0$ to obtain $A=-A^*$; the roles of $\mathcal{D}$ and $\mathcal{R}$ are
then symmetric in Lemma~\ref{lem-4.4} so we may compute:
\begin{eqnarray*}
&&\beta_{0,\tilde\alpha_1+1,\tilde\alpha_2+1}^{\partial M}
(\partial_x\phi_{\tilde\alpha_1},\partial_x\rho_{\tilde\alpha_2},D,\BB_\DR)\\
&=&\tilde\alpha_1\tilde\alpha_2
\beta_{0,\tilde\alpha_1+1,\tilde\alpha_1+1}^{\partial M}(\phi_{\tilde\alpha_1+1},
\rho_{\tilde\alpha_2+1},\BB_\DR)
=\tilde\alpha_1\tilde\alpha_2\varepsilon_{\DR,\tilde\alpha_1+1,\tilde\alpha_2+1}\\
&=&-\textstyle\frac{1-\tilde\alpha_1-\tilde\alpha_2}2
\beta_{0,\tilde\alpha_1,\tilde\alpha_2}^{\partial M}
(\phi,\rho,D,\BB_{\mathcal{R}/\mathcal{D}})
=-\textstyle\frac{1-\tilde\alpha_1-\tilde\alpha_2}2
\varepsilon_{{\mathcal{R}/\mathcal{D}},
\tilde\alpha_1,\tilde\alpha_2}\end{eqnarray*}
Lemma~\ref{lem-1.7}~(3) now follows after setting $\tilde\alpha_i=\alpha_i-1$.\hfill\qed

\subsection{Warped products}\label{sect-4.5}
Let $\mathbb{T}^{m-1}=[0,2\pi]^{m-1}$ denote the torus
where we identify $0\sim2\pi$ to form a closed manifold. Let
$(y_1,...,y_{m-1})$ be the usual periodic parameters where
$0\le y_i\le 2\pi$, and let
$$ M:=\mathbb{T}^{m-1}\times[0,\pi]\,.$$
Let
$f_a\in C^\infty([0,\pi])$ be a collection of smooth functions which have compact support near $r=0$ with
$f_a(0)=0$. Let  $\chi(r)\in C^\infty([0,\pi])$ be the mesa function
described previously; $\chi$ is identically 1 near $r=0$ and identically 0 near $r=1$. Set
$$\begin{array}{ll}
ds^2_M=\displaystyle\sum_ae^{2f_a(r)}dy_a\circ dy_a+dr\circ dr,&
\phi_2:=\chi(r)r^{-\alpha_1},\\
D_M:=-\displaystyle\sum_a
        e^{-2f_a(r)}\partial_{y_a}^2-\partial_r^2,&
\rho_2:=\chi(r)r^{-\alpha_2}\vphantom{\vrule height 14pt},\\
D_2:=-\partial_r^2,\quad \phi_M(y,r):=\phi_2(r),&\rho_M(y,r)=e^{-\sum_af_a(r)}\rho_2(r).
\end{array}$$
Let $\BB=\BB_\DD$ or let $\BB_\RR=\partial_r+S_{\mathcal{R},0}$.

\begin{lemma}\label{lem-4.5}
Adopt the notation established above. Then:
$$\beta_{j,\alpha_1,\alpha_2}(\phi_M,\rho_M,D,\mathcal{B})(y,0)=
\beta_{j,\alpha_1,\alpha_2}(\phi_2,\rho_2,D_2,\mathcal{B})(0)\,.$$
\end{lemma}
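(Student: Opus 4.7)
The strategy is to reduce everything to the one-dimensional problem on $[0,\pi]$ via separation of variables, using the careful balance between the Riemannian measure and the warping factor built into $\rho_M$. First I would show that the candidate $\tilde T(y,r;t) := T_2(r;t)$ satisfies the three defining relations of Equation~(\ref{eqn-1.c}) on $M$. The heat equation holds because a function of $r$ alone is annihilated by the $\partial_{y_a}^2$ terms in $D_M$, leaving $D_M \tilde T = -\partial_r^2 T_2 = D_2 T_2$. The initial condition is immediate from $\phi_M(y,r) = \phi_2(r)$. For the boundary condition, $e_m = \partial_r$ has unit length and $S_{\mathcal{R},0}$ is constant along each boundary component, so either $\BB_\DD$ or $\BB_\RR$ reduces on $\tilde T$ to the corresponding boundary condition on $T_2$. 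By uniqueness of the heat semigroup in the appropriate $L^1$ class, $T_M = T_2$.

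Next I would compute $dx_M = e^{\sum_a f_a(r)}\,dy\,dr$ from $\sqrt{\det g_M} = e^{\sum_a f_a}$, so that the prescribed form of $\rho_M$ produces the exact cancellation
\begin{equation*}
\langle \phi_M,\rho_M\rangle\,dx_M = \phi_2(r)\,\rho_2(r)\,dy\,dr.
\end{equation*}
The same cancellation applies term by term to the interior coefficients once one notes, by induction on $n$, that $D_M^n\phi_M = D_2^n\phi_2$ (each application of $D_M$ collapses to $-\partial_r^2$ on functions of $r$ alone). Combined with $T_M = T_2$, this yields the factorizations
\begin{equation*}
\beta(\phi_M,\rho_M,D_M,\mathcal{B})(t) = (2\pi)^{m-1}\,\beta(\phi_2,\rho_2,D_2,\mathcal{B})(t),
\end{equation*}
\begin{equation*}
\beta_n^M(\phi_M,\rho_M,D_M) = (2\pi)^{m-1}\,\beta_n^{[0,\pi]}(\phi_2,\rho_2,D_2).
\end{equation*}

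I would then invoke Conjecture~\ref{conj-1.2} on both the product and the interval, subtract the interior contributions, and equate coefficients of the non-integer powers $t^{(1+j-\alpha_1-\alpha_2)/2}$. This gives
\begin{equation*}
\beta_{j,\alpha_1,\alpha_2}^{\partial M}(\phi_M,\rho_M,D_M,\mathcal{B}) = (2\pi)^{m-1}\,\beta_{j,\alpha_1,\alpha_2}^{\partial [0,\pi]}(\phi_2,\rho_2,D_2,\mathcal{B}).
\end{equation*}
Because $\chi$ is supported away from $r=\pi$, the contributions from the component at $r=\pi$ on either side are $O(t^k)$ for every $k$, so only $r=0$ matters in the boundary asymptotics. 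On $\mathbb{T}^{m-1}\times\{0\}$ every structural ingredient---metric, initial temperature, specific heat, and $S_{\mathcal{R},0}$---is translation invariant in $y$, so the local density $\beta_{j,\alpha_1,\alpha_2}^{\partial M}(\phi_M,\rho_M,D_M,\mathcal{B})(y,0)$ is independent of $y$. Since $f_a(0)=0$, the induced boundary measure is the standard $dy$ of total volume $(2\pi)^{m-1}$, and the displayed identity forces the pointwise equality claimed.

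The main obstacle is a technical rather than a conceptual one: justifying $T_M = T_2$ requires knowing that the heat semigroup $e^{-tD_M}$ commutes with pullback along the projection $M\to[0,\pi]$ in the $L^1$ sense relevant to the singular initial data $\phi_M$, rather than merely on smooth functions. Once that is in hand, all subsequent steps are clean consequences of the warped-product weight cancellation and the uniqueness of the asymptotic expansion in Conjecture~\ref{conj-1.2}.
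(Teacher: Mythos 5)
Your proposal is correct and follows essentially the same route as the paper: verify that $T_M(y,r;t)=T_2(r;t)$ by checking the three defining relations of Equation~(\ref{eqn-1.c}), use the cancellation of $e^{\pm\sum_a f_a}$ between $\rho_M$ and the volume element to get $\beta(\phi_M,\rho_M,D_M,\mathcal{B})(t)=(2\pi)^{m-1}\beta(\phi_2,\rho_2,D_2,\mathcal{B})(t)$, and then pass to the local coefficients. The extra steps you spell out (equating the $t^{(1+j-\alpha_1-\alpha_2)/2}$ coefficients, translation invariance in $y$, and the vanishing of the $r=\pi$ contribution) are exactly what the paper leaves implicit.
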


\begin{proof} Let
$T_2:=e^{-tD_{2,\mathcal{B}}}\phi_2$ and let $T_M:=e^{-tD_{M,\BB}}\phi_M$. We show that
$$
T_M(y,r;t)=T_2(r;t)
$$
 by
verifying that $T$ satisfies the defining relations
of Equation~(\ref{eqn-1.c}):
\begin{eqnarray*}
&&(\partial_t+D_M)T_2=(\partial_t+D_2)T_2=0,\quad
\lim_{t\downarrow0}T_2(\cdot;t)=\phi_2=\phi_{M},\quad
\mathcal{B}T_2(\cdot;t)=0\,.
\end{eqnarray*}
Since the volume element on $M$ is given by $e^{\sum_af_a(r)}dydr$, we
complete the proof by computing:
\medbreak\qquad
$\beta(\phi_M,\rho_M,D_M,\mathcal{B})(t)$
\medbreak\qquad\quad
$=\int_{\mathbb{T}^{m-1}\times[0,1]}T_2(r;t)
     \{e^{-\sum_af_a(r)}\rho_2(r)\}e^{\sum_af_a(r)}dydr$
\medbreak\qquad\quad
$=\int_{\mathbb{T}^{m-1}\times[0,1]}T_2(r;t)\rho_2(r)dydr
=(2\pi)^{m-1}\int_{[0,1]}T_2(r;t)\rho_2(r)dr$
\medbreak\qquad\quad
$=(2\pi)^{m-1}\beta(\phi_2,\rho_2,D_2,\mathcal{B})(t)$.
\end{proof}

\subsection{The proof of Theorem~\ref{thm-1.9}}
We have computed most of the unknown coefficients in Lemma~\ref{lem-1.6}.
We use adopt the notation of Section~\ref{sect-4.5} and apply Lemma~\ref{lem-4.5} to
compute the remaining coefficients; the recursion relations
of Lemma~\ref{lem-1.7} also play a crucial role as do our previous computations.
We adopt the notation of Equation~(\ref{eqn-3.a}) and consider the following example.
Let $\phi_2:=r^{-\alpha_1}\chi$ and $\rho_2:=r^{-\alpha_2}\chi$ where
$\chi$ is identically $1$ near $r=0$ and vanishes for $r\ge\frac12$. We then have:
$$\phi_2^0=\rho_2^0=1,\quad\phi_2^\ell=\rho_2^\ell=0\text{ for }\ell\ge1\,.$$
 Consequently there exist suitably chosen constants $\{c_j\}$ (which play no role in our development)
so that:
\begin{eqnarray*}
\beta_{j,\alpha_1,\alpha_2}(\phi_M,\rho_M,D_M,\mathcal{B})(y,0)&=&
\beta_{j,\alpha_1,\alpha_2}(\phi_2,\rho_2,D_2,\mathcal{B})(0)\\
&=&c_j\cdot S_{\mathcal{R},0}^j
\quad\text{for}\quad j>0\,.
\end{eqnarray*}
We shall ignore this term and concentrate on other terms.
Let $\omega$ be the connection 1-form
of the connection on $V$ defined by $D$ and let $\tilde\omega$ be the dual
connection 1-form of the dual connection on $V^*$; $\tilde\omega+\omega=0$.
We let $e_a:=\partial_{y_a}$ for $1\le a\le m$ and $e_m:=\partial_r$; this is
an orthonormal frame on $\partial M$. We use Lemma~\ref{lem-1.1} to compute:
$$
\begin{array}{ll}
\Gamma_{abm}=-f_a^\prime\delta_{ab}e^{2f_a},&\Gamma_{ab}{}^m=-f_a^\prime e^{2f_a}\delta_{ab},\\
\Gamma_{amb}=f_a^\prime\delta_{ab}e^{2f_a}
,&\Gamma_{am}{}^b=f_a^\prime\delta_a^b,
   \vphantom{\vrule height 12pt}\\
\left.L_{ab}\right.|_{\partial M}
=\left.\Gamma_{ab}{}^m\right|_{\partial M}
=\left.-f_a^\prime\delta_{ab}\right|_{\partial M},&\vphantom{\vrule height 12pt}\\
\textstyle\omega_a=0,&\tilde\omega_a=0,
   \vphantom{\vrule height 12pt}\\
\omega_m=-{\textstyle\frac12}\sum_af_a^\prime,&
\tilde\omega_m=-\omega_m={\textstyle\frac12}\sum_af_a^\prime\,.\vphantom{\vrule
height 12pt}
\end{array}$$
Consequently: \medbreak\qquad
$\left.R_{ambm}\right|_{\partial M}=\left.g((\nabla_a\nabla_m-\nabla_m\nabla_a)e_b,e_m)\right|_{\partial M}
=\left.\left\{\Gamma_{ac}{}^m\Gamma_{mb}{}^c-\partial_m\Gamma_{ab}{}^m\right\}\right|_{\partial M}$
\medbreak\qquad\quad
$=\left.\left\{-(f_a^\prime)^2+f_a^{\prime\prime}+2(f_a^\prime)^2\}\delta_{ab}\right\}\right|_{\partial M}$,
\medbreak\qquad
$\left.\operatorname{Ric}_{mm}\right|_{\partial M}
=\left.-\textstyle\sum_a\left\{f_a^{\prime\prime}+(f_a^\prime)^2\right\}\right|_{\partial M}$,
\medbreak\qquad $E|_{\partial
M}=\left.\left\{\vphantom{\textstyle\frac12\sum_a}-\partial_m\omega_m
-\omega_m^2+\omega_m\Gamma_{aa}{}^m\right\}\right|_{\partial M}$
\medbreak\qquad\quad
$=\left.\left\{\textstyle\frac12\sum_af_a^{\prime\prime}-\frac14\sum_{a,b}f_a^\prime
f_b^\prime+\frac12\sum_{a,b}f_a^\prime f_b^\prime\right\}\right|_{\partial M}$
\medbreak\qquad\quad
$=\left.\left\{\textstyle\frac12\sum_af_a^{\prime\prime}+\frac14\sum_{a,b}f_a^\prime
f_b^\prime\right\}\right|_{\partial M}$.
\medbreak\noindent At this stage, the connection $1$-form enters in a crucial fashion
as we must use the dual connection and the dual connection $1$-form $\tilde\omega$
to covariantly differentiate $\rho$. Thus $\nabla_{\partial r}\psi=(\partial_r\psi+\omega_m\psi)$ if
$\psi\in C^\infty(V)$ while $\nabla_{\partial r}\psi=(\partial_r\psi-\omega_m\psi)$ if $\psi\in V^*$. We
use Equation~(\ref{eqn-1.b}) to compute:
\medbreak\quad
$\phi_M^0=1$,
\medbreak\quad
$\phi_M^1=\left.\left\{\nabla_{\partial r}(r^{\alpha_1}\phi_M)\right\}\right|_{\partial M}
=\left.\left\{(\partial_r-\textstyle{\textstyle\frac12}\sum_af_a^\prime)(1)\right\}\right|_{\partial M}=
\left.-{\textstyle\frac12}\sum_af_a^\prime\right|_{\partial M}$,
\medbreak\quad
$\phi_M^2={\textstyle\frac12}\{(\nabla_{\partial_r})^2(r^{\alpha_1}\phi_M)\}|_{\partial M}
=\left.{\textstyle\frac12}\left\{(\partial_r-\textstyle{\textstyle\frac12}\sum_af_a^\prime)^2(1)\right\}\right|_{\partial M}$
\smallbreak\qquad\phantom{..}
$=\left.{\textstyle\frac18}\left\{\sum_{a,b}f_a^\prime f_b^\prime
-{\textstyle\frac14}\sum_af_a^{\prime\prime}\right\}\right|_{\partial M}$,
\medbreak\quad
$\rho_M^0=\left.e^{-\sum_af_a}\right|_{\partial M}=1$,
\medbreak\quad
$\rho_M^1=\left.\left\{\nabla^*_{\partial r}(\rho_M)\right\}\right|_{\partial M}
=\left.\left\{(\partial_r+{\textstyle\frac12}\sum_af_a^\prime)(e^{-\sum_af_a})\right\}\right|_{\partial
M}=-\left.\left\{{\textstyle\frac12}\sum_af_a^\prime\right\}\right|_{\partial M}$,
\medbreak\quad
$\rho_M^2=\left.{\textstyle\frac12}\left\{(\nabla^*_{\partial r})^2\rho_M\right\}\right|_{\partial M}
=\left.{\textstyle\frac12}\left\{(\partial_r+{\textstyle\frac12}\sum_af_a^\prime)^2
(e^{-\sum_af_a})\right\}\right|_{\partial M}$
\smallbreak\qquad\phantom{.}
$=\left.\textstyle{\textstyle\frac18}\left\{\sum_{a,b}f_a^\prime f_b^\prime
-{\textstyle\frac14}\sum_af_a^{\prime\prime}\right\}\right|_{\partial M}$.
\medbreak\noindent
 To ensure that the Robin boundary on $M$ takes the form
$\mathcal{B}=\partial_r+S_{\mathcal{R},0}$, we set
$$\SR=S_{\mathcal{R},0}-\left.\omega_m\right|_{\partial M}
=S_{\mathcal{R},0}+\left.\left\{\textstyle\frac12\sum_af^\prime_a\right\}\right|_{\partial M}\,.$$

\subsubsection{The coefficient of $\sum_af_a^\prime$ in
$\beta_1^{\partial M}$}\
\medbreak\quad
$0=-{\textstyle\frac12}\varepsilon_{\RR,\alpha_1,\alpha_2}^1
-\varepsilon_{\RR,\alpha_1,\alpha_2}^2
-{{\textstyle\frac12}\varepsilon_{\RR,\alpha_1,\alpha_2}^3}
+\textstyle\frac12\varepsilon_{\RR,\alpha_1,\alpha_2}^{15}$.
\medbreak\noindent
Consequently
\medbreak\quad
$\varepsilon_{\mathcal{R},\alpha_1,\alpha_2}^2=
-{\textstyle\frac12}\varepsilon_{\RR,\alpha_1,\alpha_2}^1
-{\textstyle\frac12}\varepsilon_{\RR,\alpha_1,\alpha_2}^3
+{\textstyle\frac12}\varepsilon_{\RR,\alpha_1,\alpha_2}^{15}$
\medbreak\qquad
$=\textstyle\frac12\{-\Xone-\Xthree+\Xfifteen\}$
\medbreak\qquad
$=\Xtwo$
\medbreak\noindent
We use Lemma~\ref{lem-4.3} to shift indices and compute:
\medbreak\quad
$\varepsilon_{\mathcal{R},\alpha_1,\alpha_2}^5=
\varepsilon_{\mathcal{R},\alpha_1-1,\alpha_2}^2$
\medbreak\qquad
$=\Xfive$,
\medbreak\qquad
$\varepsilon_{\mathcal{R},\alpha_1,\alpha_2}^8=\varepsilon_{\mathcal{R},\alpha_1,\alpha_2-1}^2$
\medbreak\qquad
$=\Xeight$.

\subsubsection{The coefficient of $S_{\mathcal{R},0}\sum_af_a^\prime$ in $\beta_2^{\partial M}$}
$$\textstyle0=\varepsilon_{\RR,\alpha_1,\alpha_2}^{16}
-\frac12\varepsilon_{\RR,\alpha_1,\alpha_2}^{17}
-\frac12\varepsilon_{\RR,\alpha_1,\alpha_2}^{18}
-\varepsilon_{\RR,\alpha_1,\alpha_2}^{19}\,.
$$
Consequently
\medbreak\quad$\varepsilon_{\RR,\alpha_1,\alpha_2}^{19}
=\varepsilon_{\RR,\alpha_1,\alpha_2}^{16}-\frac12\varepsilon_{\RR,\alpha_1,\alpha_2}^{17}
-\frac12\varepsilon_{\RR,\alpha_1,\alpha_2}^{18}$
\medbreak\qquad$=
\textstyle\Xsixteen$
\smallbreak\qquad\quad$-\textstyle\frac12\Xseventeen$
\smallbreak\qquad\quad$-\textstyle\frac12\Xeighteen$
\medbreak\qquad$=
\textstyle\frac12\frac2{3-\alpha_1-\alpha_2}
\left\{2+\frac{(\alpha_1-2)(\alpha_1-1)}{\alpha_1-2}+\frac{(\alpha_2-2)(\alpha_2-1)}{\alpha_2-2}\right\}
\varepsilon_{\RR,\alpha_1,\alpha_2}$
\medbreak\qquad\quad$
+\textstyle\frac12\frac{1}
{(\alpha_1-1)(\alpha_2-1)}\{2+(\alpha_1-1)+(\alpha_2-1)\}\varepsilon_{\RR,\alpha_1-1,\alpha_2-1}$
\medbreak\qquad
$=\Xnineteen$.
\subsubsection{The coefficients of $\sum_af_a^{\prime\prime}$ and $\sum_a(f_a^{\prime})^2$ in $\beta_2^{\partial M}$}
We obtain the relations:
\begin{eqnarray*}
&&0=-{\textstyle\frac14}\varepsilon_{\RR,\alpha_1,\alpha_2}^4
+{\textstyle\frac12}\varepsilon_{\RR,\alpha_1,\alpha_2}^6
-{\textstyle\frac14}\varepsilon_{\RR,\alpha_1,\alpha_2}^7
-\varepsilon_{\RR,\alpha_1,\alpha_2}^9,\text{ and}\\
&&0=
\textstyle-\varepsilon_{\RR,\alpha_1,\alpha_2}^9
+\varepsilon_{\RR,\alpha_1,\alpha_2}^{11}\,.
\end{eqnarray*}
This shows
\medbreak\quad
$\varepsilon_{\RR,\alpha_1,\alpha_2}^9=\varepsilon_{\RR,\alpha_1,\alpha_2}^{11}
=-\frac14\varepsilon_{\RR,\alpha_1,\alpha_2}^4+\frac12\varepsilon_{\RR,\alpha_1,\alpha_2}^6
-\frac14\varepsilon_{\RR,\alpha_1,\alpha_2}^7$
\medbreak\qquad
$=\textstyle-\frac14\Xfour-\frac14\Xseven+\frac12\Xsix$
\medbreak\qquad
$=\left\{-\frac14\frac{2(\alpha_1-2)(\alpha_1-1)}{3-\alpha_1-\alpha_2}
-\frac14\frac{2(\alpha_2-2)(\alpha_2-1)}{3-\alpha_1-\alpha_2}+\frac12\right\}
\varepsilon_{\RR,\alpha_1,\alpha_2}$
\medbreak\qquad
$=\Xnine$.

\subsubsection{The coefficient of $\sum_{a,b}f_a^\prime f_b^\prime$ in
$\beta_2^{\partial M}$}
\begin{eqnarray*}
&&0=\textstyle
\frac18\varepsilon_{\RR,\alpha_1,\alpha_2}^4
+{\textstyle\frac12}\varepsilon_{\RR,\alpha_1,\alpha_2}^5
+{\textstyle\frac14}\varepsilon_{\RR,\alpha_1,\alpha_2}^6
+{\textstyle\frac18}\varepsilon_{\RR,\alpha_1,\alpha_2}^7
+{\textstyle\frac12}\varepsilon_{\RR,\alpha_1,\alpha_2}^{8}
+\varepsilon_{\RR,\alpha_1,\alpha_2}^{10}\\
&&\quad\textstyle+\frac14\varepsilon_{\RR,\alpha_1,\alpha_2}^{14}
+\frac14\varepsilon_{\RR,\alpha_1,\alpha_2}^{16}
-\frac14\varepsilon_{\RR,\alpha_1,\alpha_2}^{17}
-\frac14\varepsilon_{\RR,\alpha_1,\alpha_1}^{18}
-\frac12\varepsilon_{\RR,\alpha_1,\alpha_2}^{19}\,.
\end{eqnarray*}
This implies
\medbreak\quad
$\varepsilon_{\RR,\alpha_1,\alpha_2}^{10}=\textstyle-\frac18\Xfour-\frac14\Xsix-\frac18\Xseven-\frac14\Xfourteen$
      \medbreak\qquad$-\frac12\{\Xfive\}$\hfill($-\frac12\varepsilon_{\RR,\alpha_1,\alpha_2}^5$)
      \medbreak\qquad$-\frac12\{\Xeight\}$\hfill($-\frac12\varepsilon_{\RR,\alpha_1,\alpha_2}^8$)
      \medbreak\qquad$-\frac14\{\Xsixteen\}$\hfill($-\frac14\varepsilon_{\RR,\alpha_1,\alpha_2}^{16}$)
     \medbreak\qquad$+\frac14\Xseventeen$\hfill($\frac14\varepsilon_{\RR,\alpha_1,\alpha_2}^{17}$)
     \medbreak\qquad$+\frac14\Xeighteen$\hfill($\frac14\varepsilon_{\RR,\alpha_1,\alpha_2}^{18}$)
     \medbreak\qquad$+\frac12\{\Xnineteen\}$\hfill($\frac12\varepsilon_{\RR,\alpha_1,\alpha_2}^{19}$)
\medbreak\quad
$=\left\{-\frac18+\frac14\frac{\alpha_1-1}{\alpha_1-2}-\frac14\frac1{\alpha_1-2}\right\}\varepsilon_{\RR,\alpha_1-2,\alpha_2}
+\left\{-\frac18+\frac14\frac{\alpha_2-1}{\alpha_2-2}-\frac14\frac1{\alpha_2-2}
\right\}\varepsilon_{\RR,\alpha_1,\alpha_2-2}$
\medbreak\qquad
$+\left\{-\frac14+\frac14\frac{\alpha_2}{\alpha_2-1}+\frac14\frac{\alpha_1}{\alpha_1-1}
-\frac14\frac1{(\alpha_1-1)(\alpha_2-1)}-\frac14\frac1{\alpha_2-1}-\frac14\frac1{\alpha_1-1}\right.$
\medbreak\qquad\qquad$\left.
+\frac14\frac{\alpha_1+\alpha_2}
{(\alpha_1-1)(\alpha_2-1)}\right\}\varepsilon_{\RR,\alpha_1-1,\alpha_2-1}$
\medbreak\qquad\quad
$\left\{-\frac14-\frac14\frac2{3-\alpha_1-\alpha_2}+\frac12\frac{\alpha_1+\alpha_2}{3-\alpha_1-\alpha_2}\right\}
\varepsilon_{\RR,\alpha_1,\alpha_2}$
\medbreak\quad
$=\frac18\varepsilon_{\RR,\alpha_1-2,\alpha_2}
   +\frac18\varepsilon_{\RR,\alpha_1,\alpha_2-2}
   +\frac{3\alpha_1+3\alpha_2-5}{4(3-\alpha_1-\alpha_2)}\varepsilon_{\RR,\alpha_1,\alpha_2}$
\medbreak\qquad
$+\frac14\left\{-1+1+1+{\frac{\alpha_1+\alpha_2-1}
{(\alpha_1-1)(\alpha_2-1)}}\right\}\varepsilon_{\RR,\alpha_1-1,\alpha_2-1}$
\medbreak\quad
$=\frac{(\alpha_1-2)(\alpha_1-1)+(\alpha_2-2)(\alpha_1-1)+3\alpha_1+3\alpha_2{-5}}{4(3-\alpha_1-\alpha_2)}\varepsilon_{\RR,\alpha_1,\alpha_2}
+\frac14\frac{\alpha_1\alpha_2}{(\alpha_1-1)(\alpha_2-1)}\varepsilon_{\RR,\alpha_1-1,\alpha_2-1}$
\medbreak\quad
$=\Xten$.
\medbreak\noindent We have determined all the unknown coefficients in Lemma~\ref{lem-1.4}. This
completes the proof of Theorem~\ref{thm-1.9}. \hfill\qed

\section*{Acknowledgments}
Research of P. Gilkey partially supported by project MTM2009-07756
(Spain), by project 174012 (Serbia), and by the Korean Institute for Advanced Studies.
Research of H. Kang supported by a research fellowship from the Korea Institute for Advanced Study.

\end{document}